\documentclass{amsart}
\usepackage{fullpage}
\usepackage{amsmath,amssymb,amsthm,mathtools}
\usepackage{microtype}
\usepackage{hyperref}
\newtheorem{theorem}{Theorem}[section]
\newtheorem{proposition}[theorem]{Proposition}
\newtheorem{lemma}[theorem]{Lemma}
\theoremstyle{remark}
\newtheorem{remark}[theorem]{Remark}

\title{Uniform Resolvent Estimates for the Discrete Schr\"odinger Operator in Higher Dimensions}

\author{Yuda Chen}
\email{\href{mailto:math.yuda.chen@gmail.com}{math.yuda.chen@gmail.com}}
\address{School of Mathematical Sciences and LPMC, Nankai University, Tianjin 300071, China}
\date{}
\subjclass[2020]{Primary 47A10; Secondary 42B20, 47B39, 81Q35}
\begin{document}

\begin{abstract}
Let \(H_0\) be the standard discrete Laplacian on \(\mathbb Z^d\),
\(d\geq4\), let \(R_0(z)=(H_0-z)^{-1}\), and let \(p'\) denote the
H\"older conjugate of \(p\), with \(p'=\infty\) when \(p=1\).
We establish uniform diagonal resolvent estimates from
\(\ell^p(\mathbb Z^d)\) to \(\ell^{p'}(\mathbb Z^d)\) by proving local
Fourier-decay bounds for surface measure on the Fermi surfaces of the
lattice dispersion relation.  At a regular point, the decay is governed
by the number of coordinate directions in which the quadratic term
vanishes: after a local change of variables, these directions produce
cubic one-dimensional phases, while the remaining directions are
quadratic.  For \(d\geq5\), we prove the sharp global range
\[
1\leq p\leq\frac{2(d+2)}{d+5}.
\]
Away from the threshold energies, this range remains sharp in odd
dimensions, whereas for even \(d\geq6\) the sharp range improves to
\[
1\leq p\leq\frac{2(2d+5)}{2d+11}.
\]
In dimension four, we obtain uniform estimates for \(1\leq p<4/3\) and
an \(\ell^{4/3}\)-to-\(\ell^4\) endpoint estimate with a square-root
logarithmic loss.  Matching anisotropic Knapp-type constructions give the
corresponding necessary conditions.  We also derive Birman--Schwinger and
Kato smoothing bounds, complex-eigenvalue estimates, and improved thin
spectral projection estimates.
\end{abstract}

\maketitle
\section{Introduction}
Uniform resolvent estimates are a discrete analogue of uniform Sobolev
inequalities and form a basic input in the spectral and scattering theory
of Schr\"odinger operators.  For the continuous Laplacian on
\(\mathbb R^d\), \(d\geq3\), the scale-invariant estimate
\[
\sup_{z\in\mathbb C\setminus[0,\infty)}
\|(-\Delta-z)^{-1}\|_{
\mathcal B(L^{2d/(d+2)}(\mathbb R^d),L^{2d/(d-2)}(\mathbb R^d))}
<\infty
\]
is part of the classical uniform Sobolev theory of Kenig, Ruiz, and
Sogge \cite{KenigRuizSogge1987}.  The lattice problem has a substantially
different geometry.  The frequency space is compact, there is no useful
dilation symmetry, and the level sets of the dispersion relation change
type with the energy.  In particular, they may have vanishing curvature
at regular points and become singular at threshold energies.

Weighted resolvent and Birman--Schwinger estimates for the discrete
Laplacian were developed by Korotyaev and M{\o}ller
\cite{KorotyaevMoller2019} and by Tadano and Taira
\cite{TadanoTaira2019}.  In particular, writing \(H_0\) for the standard
discrete Laplacian and \(p'\) for the H\"older conjugate of \(p\), with
\(p'=\infty\) when \(p=1\), Tadano and Taira proved the
\[
\sup_{z\in\mathbb C\setminus[0,4d]}
\|(H_0-z)^{-1}\|_{\mathcal B(\ell^p(\mathbb Z^d),\ell^{p'}(\mathbb Z^d))}
<\infty
\]
for \(d\geq4\) and \(1\leq p\leq 2d/(d+3)\), and also showed that the
continuous Sobolev exponent \(2d/(d+2)\) is not available when
\(d\geq5\).  The Fourier transform of surface-carried measures associated
with the cubic lattice had earlier been studied by Erd\H{o}s and
Salmhofer \cite{ErdosSalmhofer2007}.  In dimension three, Taira
\cite{Taira2021} used local Fourier decay on the Fermi surfaces to obtain
the sharp away-from-threshold range \(1\leq p\leq5/4\), together with
the global range \(1\leq p\leq6/5\).

For \(d\geq5\), the present paper determines the optimal global
diagonal range and the optimal away-from-threshold diagonal range.  The
global endpoint is
\[
\frac{2(d+2)}{d+5},
\]
which strictly improves \(2d/(d+3)\) and is sharp.  Away from the
threshold energies, the same endpoint remains sharp in odd dimensions,
whereas in even dimensions \(d\geq6\) it improves sharply to
\[
\frac{2(2d+5)}{2d+11}.
\]
Dimension four is borderline: we prove uniform estimates for
\(1\leq p<4/3\) and an endpoint estimate with a square-root logarithmic
loss.  The necessary construction shows that \(p\leq4/3\) is required
globally, but no claim is made here that the logarithmic loss is
necessary; see Remark~\ref{rem:d4-log}.  The resulting resolvent and
Fourier-decay estimates also yield Birman--Schwinger, smoothing,
complex-eigenvalue, and thin spectral projection bounds; see
Section~\ref{sec:applications}.

The higher-dimensional problem is not a formal iteration of the
three-dimensional argument.  A regular point may have any number of
coordinate directions in which the quadratic term vanishes.  After a
suitable local change of variables, these directions produce cubic
one-dimensional phases, while the remaining directions are quadratic.
The number of cubic directions controls both the Fourier decay and the
resolvent range.  The worst regular points occur at the central energy
\(2d\); whether this energy is itself a threshold is responsible for the
parity distinction in the away-from-threshold part of
Theorem~\ref{thm:resolvent}.

\subsection*{Main results}
Let \(d\geq4\) and let
\(\mathbb T^d=(-1/2,1/2]^d\).
For \(f:\mathbb Z^d\to\mathbb C\), we use the Fourier transform
\[
\widehat f(\xi)
=
\sum_{n\in\mathbb Z^d}f(n)e^{-2\pi i n\cdot\xi},
\qquad
f(n)
=
\int_{\mathbb T^d}e^{2\pi i n\cdot\xi}\widehat f(\xi)\,d\xi.
\]
For a bounded function \(m\) on \(\mathbb T^d\), we write \(m(D)\) for
the Fourier multiplier with symbol \(m\).
The discrete Laplacian \(H_0\) is the Fourier multiplier with symbol
\[
h_0(\xi)
=
4\sum_{j=1}^d\sin^2(\pi\xi_j)
=
2d-2\sum_{j=1}^d\cos(2\pi\xi_j),
\]
and
\[
R_0(z)=(H_0-z)^{-1},
\qquad
z\in\mathbb C\setminus[0,4d].
\]
We first state the uniform limiting estimates for
\(z\in\mathbb C\setminus\mathbb R\).  Since the multiplier
\((h_0-z)^{-1}\) is smooth for real \(z\notin[0,4d]\), these estimates
extend to the full resolvent set \(\mathbb C\setminus[0,4d]\).  The
critical points and critical values of \(h_0\) are
\[
\operatorname{Cr}(h_0)
=
\left\{\xi\in\mathbb T^d:\xi_j\in\{0,1/2\}\right\},
\qquad
\{0,4,\dots,4d\},
\]
respectively. For \(0\leq\lambda\leq4d\), set
\[
M_\lambda=\{\xi\in\mathbb T^d:h_0(\xi)=\lambda\}.
\]
A point \(\xi\in M_\lambda\) is called regular if
\(\nabla h_0(\xi)\neq0\), and the regular part of \(M_\lambda\) is the
set of all its regular points.  On the regular part, \(M_\lambda\) is a
smooth \((d-1)\)-dimensional hypersurface, and
\(d\sigma_{M_\lambda}\) denotes its induced Euclidean surface measure.
All integrals against \(d\sigma_{M_\lambda}\) are understood over the
regular part of \(M_\lambda\).  For a smooth cutoff \(\chi\), write
\[
\widehat{\chi\,d\sigma_{M_\lambda}}(x)
=
\int_{M_\lambda}
e^{-2\pi i x\cdot\xi}\chi(\xi)\,d\sigma_{M_\lambda}(\xi),
\qquad x\in\mathbb Z^d.
\]

\begin{theorem}[Local Fourier decay]\label{thm:local-decay}
Let \(\xi^0\in M_\lambda\), and let \(\chi\) be supported sufficiently
close to \(\xi^0\).

Suppose first that \(\xi^0\) is a regular point, and put
\(m=\#\{j:\cos(2\pi\xi_j^0)=0\}\).
If \(0\leq m\leq4\), then
\[
\left|\widehat{\chi\,d\sigma_{M_\lambda}}(x)\right|
\leq
C(1+|x|)^{-(d-2)/2}
\bigl(\log(2+|x|)\bigr)^{m/4}.
\]
If \(m\geq5\), then
\[
\left|\widehat{\chi\,d\sigma_{M_\lambda}}(x)\right|
\leq
C(1+|x|)^{-(d-m)/2-(m-1)/3}.
\]

If \(\xi^0\in\operatorname{Cr}(h_0)\), then
\[
\left|\widehat{\chi\,d\sigma_{M_\lambda}}(x)\right|
\leq
C(1+|x|)^{-(d-2)/2}.
\]
At \(\lambda=0\) and \(\lambda=4d\), there is no local
\((d-1)\)-dimensional regular part near the corresponding critical point.
\end{theorem}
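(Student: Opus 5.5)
We reduce everything to oscillatory integrals with explicit phases, treating the regular case and the critical case separately.

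\textbf{Regular points.} At a regular point \(\xi^0\), some \(\sin(2\pi\xi^0_k)\neq0\), so \(\partial_{\xi_k}h_0(\xi^0)\neq0\). By the implicit function theorem, near \(\xi^0\) the surface \(M_\lambda\) is a graph \(\xi_k=g(\xi')\) over the remaining \(d-1\) coordinates. Hence
\[
\widehat{\chi\,d\sigma_{M_\lambda}}(x)=\int e^{-2\pi i(x'\cdot\xi'+x_k g(\xi'))}a(\xi')\,d\xi',
\]
with \(a\) smooth and compactly supported.

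\emph{Reduction of the phase.} The relation \(\sum_j\cos(2\pi\xi_j)=\text{const}\) lets us write the phase in coordinates that nearly separate. Put \(\eta_j=\cos(2\pi\xi_j)-\cos(2\pi\xi^0_j)\) for \(j\neq k\). Then \(\xi_k\) is a smooth function of \(\sum_{j\neq k}\eta_j\), and each \(\xi_j\) is a smooth function of \(\eta_j\). I would rather work directly with the Hessian of \(g\). Its quadratic part is \(\operatorname{diag}(c_j\cos(2\pi\xi^0_j))\) plus a rank-one term coming from the \(\xi_k\) dependence. The \(m\) indices with \(\cos(2\pi\xi^0_j)=0\) (necessarily \(j\neq k\), since \(\sin(2\pi\xi^0_k)\neq0\) would otherwise still allow it, so we choose \(k\) suitably) are the directions with vanishing second derivative. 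In each such direction \(\partial_j^3\) of \(\cos(2\pi\xi_j)\) is nonzero, so a cubic term survives.

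\emph{Decay when the stationary direction is dominant.} If \(|x_k|\lesssim|x'|\) is large relative to the \(x_k\)-terms, non-stationary phase gives rapid decay. So assume \(|x_k|\sim|x|\). After a smooth local change of variables (Morse lemma with parameters in the \(d-1-m\) nondegenerate directions), the phase becomes
\[
x_k\Bigl(\sum_{\text{quadratic }j}\pm y_j^2+\sum_{\text{cubic }j}\phi_j(y_j)\Bigr)+\text{linear}+\text{coupling}.
\]
Here \(\phi_j(y)=\alpha y^3+\dots\). The main obstacle is the coupling: \(\xi_k\) depends on the sum of all \(\eta_j\), so the cubic variables are not truly decoupled.

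My plan for the coupling is as follows. Since \(g\) depends on \(\xi'\) only through the sum \(s=\sum_{j\neq k}(\cos 2\pi\xi_j-\cos2\pi\xi^0_j)\), write \(x_kg=x_kG(s)\). Set \(G'(0)\neq0\) and linearize. The \(e^{ix_kG(s)}\) factor is handled by introducing one auxiliary variable via the Fourier representation \(e^{ix_kG(s)}=\int \widehat{\cdot}(\tau)e^{i\tau s}d\tau\) localized to \(|\tau-x_kG'(0)|\lesssim|x_k|\). The integrand then factors into a product of one-dimensional integrals \(\int e^{i(\tau\cos2\pi\xi_j-x_j\xi_j)}a_j\,d\xi_j\).

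Each quadratic factor is \(O(|\tau|^{-1/2})\) by van der Corput. Each cubic factor is \(O(|\tau|^{-1/3})\), and also \(O(|\tau|^{-1/4}|\tau-\,\cdot\,|^{-1/4})\)-type, with Airy behaviour in the linear frequency \(x_j\). Integrating the \(\tau\)-variable over a window of size \(|x|\) costs one power of \(|x|\) but gains from the oscillation of \(G\). The net effect is that one quadratic factor (the \(\xi_k\) direction) is lost, which produces the exponent \((d-m-?)\). Concretely, for \(m\geq5\) I expect the total to be \(|x|^{-(d-1-m)/2-m/3}\cdot|x|^{\,\cdot}\), giving \((d-m)/2+(m-1)/3\) after the \(\tau\)-integration. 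This accounts for one cubic direction being absorbed and for the \(\tau\)-window replacing one quadratic factor.

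For \(m\leq4\), the cubic factors are instead bounded by the Airy bound \(\min(|\tau|^{-1/3},|\tau|^{-1/4}|x_j|^{-1/4})\). One then integrates over the frequencies. Each cubic factor contributes at best \(|x|^{-1/2}\) after averaging, with a \(\log^{1/4}\) loss from the endpoint of the Airy square-function integral. This yields \((d-2)/2\) with \((\log)^{m/4}\). I expect this bookkeeping, namely getting exactly \(\log^{m/4}\) and the threshold \(m=5\) where \(m/3\) beats \(m/2\) losses, to be the hardest step. I would do it by Hölder in \(\tau\) with \(L^4\) norms of the Airy factors, whose \(L^4\) norm is log-divergent.

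\textbf{Critical points.} If \(\xi^0\in\operatorname{Cr}(h_0)\), the surface is locally a nondegenerate quadric cone \(\sum\pm y_j^2=\lambda-\lambda_c\) with \(\lambda_c\) a critical value, after the Morse lemma. Since \(0<\lambda<4d\) forces indefinite signature, the surface measure is locally comparable to a cone measure of a quadratic form. Its Fourier transform is computed by the coarea formula as \(\int\delta(Q(y)-\mu)\,a\,e^{-ix\cdot y}\,dy\). Writing \(\delta=\int e^{it(Q-\mu)}dt\) and using the Gaussian bound \(|t|^{-d/2}\min(1,\cdot)\) together with stationary phase in \(y\) gives the bound \(|x|^{-(d-2)/2}\). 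The last assertion follows because the extremal points are local extrema with definite Hessian, so \(M_0\) and \(M_{4d}\) are isolated points.
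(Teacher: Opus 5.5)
Your outline for regular points matches the paper's: cosine coordinates to decouple the constraint, one auxiliary Fourier variable, a product of one-dimensional quadratic and cubic integrals, and H\"older with $L^4$ norms when $m\le4$. However, the step that produces the exponent $(d-m)/2+(m-1)/3$ for $m\ge5$ is missing, and the mechanism you describe would not give it. Take sup bounds on every factor and integrate your auxiliary variable $\tau$ over a window of length $\sim|x|$. This yields only $|x|^{-(d-m)/2-m/3+1}$, which loses $|x|^{2/3}$. Letting a single cubic factor ``absorb'' the $\tau$-integral also fails: its Airy tail $|x|^{-1/4}|\tau-\tau_j^\ast|^{-1/4}$ has $L^1$ norm $\sim|x|^{1/2}$ on that window.

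The missing ingredient is an $L^p$ bound for each cubic factor as a function of the auxiliary variable. Rescale it as $T^{-1}B_T(\cdot/T)$ with $T=|x_j|^{1/3}$. Then $|B_T(u)|\lesssim(1+|u|)^{-1/4}$, with rapid decay for $|u|\gtrsim T^2$. Hence, with $R=|x|$, $\|C_j\|_{L^p}\lesssim R^{-1/3+1/(3p)}$ for $p>4$, and $\lesssim R^{-1/4}(\log(2+R))^{1/4}$ for $p=4$ (Lemma~\ref{lem:cubic-1d}). H\"older with every cubic factor in $L^m$ and every quadratic factor in $L^\infty$ then gives exactly $R^{-(d-m)/2-(m-1)/3}$. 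This is where $m\ge5$ enters: you need $m>4$ to be above the $L^4$ borderline of the Airy tail. It is not a comparison of ``$m/3$ versus $m/2$ losses''. H\"older also automatically handles the fact that the peaks $\tau_j^\ast$ of different cubic factors can be $\sim\varepsilon|x|$ apart, so there is no common short window. Your stated Airy bound $\min(|\tau|^{-1/3},|\tau|^{-1/4}|x_j|^{-1/4})$ is also wrong: the second term must involve $|\tau-\tau_j^\ast|$. As written it would give a uniform $|x|^{-1/2}$ on the window and no logarithm at all.

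There are three further problems. First, your count assumes you can choose $k$ with $\cos(2\pi\xi_k^0)\neq0$. That is impossible when $m=d$, e.g.\ at $\xi^0=(1/4,\dots,1/4)$ with $\lambda=2d$, which is exactly the point dictating the sharp global range. There the factor coming from $e^{ix_kG}$ is itself of Airy type, so you must treat all $d$ coordinates symmetrically on the hyperplane $\sum_j t_j=0$, as the paper does.

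Second, nothing factors until you reduce to product amplitudes, by Fourier-expanding the amplitude in the decoupling coordinates with estimates uniform in the resulting linear perturbations. Regular points with some $\sin(2\pi\xi_j^0)=0$, where your coordinates $\eta_j$ degenerate, are also not treated. The paper uses mixed angular/cosine coordinates there and restricts the auxiliary variable to a window of length $\sim R$, on which the angular factors are quadratic.

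Third, at critical points the Euclidean surface measure is $|\nabla h_0|\,\delta(h_0-\lambda)$, and $|\nabla h_0|\sim|u|$ is not smooth at the vertex. So writing the measure as $a\,\delta(Q)$ with smooth $a$ and invoking a uniform $|t|^{-d/2}$ stationary-phase bound is not justified. Replacing the measure by a merely ``comparable'' one after a Morse chart is not justified either. The paper instead uses the exact separable form $h_0-4q=2\sum_j\sigma_j(1-\cos u_j)$ and decomposes dyadically in $|u|\simeq2^{-k}$. It rescales each piece to a fixed annulus where the density is uniformly smooth, runs the delta-representation argument there, and sums over $k$.
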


\begin{theorem}[Uniform resolvent estimates]\label{thm:resolvent}
Let \(p'\) be the H\"older conjugate of \(p\), with the convention
\(p'=\infty\) when \(p=1\).  For a function
\(W:\mathbb Z^d\to\mathbb C\), we use the same letter
\(W\) for the corresponding multiplication operator, defined by
\((Wf)(n)=W(n)f(n)\).  Here and below, the corresponding weighted
estimate means that, for all \(W_1,W_2\in\ell^r(\mathbb Z^d)\),
\[
\sup_z
\|W_1R_0(z)W_2\|_{\mathcal B(\ell^2(\mathbb Z^d))}
\leq
C\|W_1\|_{\ell^r(\mathbb Z^d)}
\|W_2\|_{\ell^r(\mathbb Z^d)},
\]
where the supremum is taken over the same set of spectral parameters as
in the corresponding unweighted estimate.

If \(d=4\), then
\(\sup_{z\in\mathbb C\setminus\mathbb R} \|R_0(z)\|_{\mathcal B(\ell^p(\mathbb Z^4),\ell^{p'}(\mathbb Z^4))} <\infty\)
for \(1\leq p<4/3\).  Equivalently, the corresponding weighted estimate
holds for \(2\leq r<4\); it also holds for \(1\leq r<2\) by the inclusion
\(\ell^r\subset\ell^2\).  At the endpoint,
\[
\|R_0(z)\|_{\mathcal B(\ell^{4/3}(\mathbb Z^4),\ell^4(\mathbb Z^4))}
\leq
C\left(
\log\left(2+\frac1{\operatorname{dist}(z,[0,16])}\right)
\right)^{1/2},
\]
and
\[
\|W_1R_0(z)W_2\|_{\mathcal B(\ell^2(\mathbb Z^4))}
\leq
C\left(
\log\left(2+\frac1{\operatorname{dist}(z,[0,16])}\right)
\right)^{1/2}
\|W_1\|_{\ell^4}\|W_2\|_{\ell^4}.
\]

If \(d\geq5\), then
\[
\sup_{z\in\mathbb C\setminus\mathbb R}
\|R_0(z)\|_{\mathcal B(\ell^p(\mathbb Z^d),\ell^{p'}(\mathbb Z^d))}
<\infty
\]
for
\[
1\leq p\leq\frac{2(d+2)}{d+5},
\]
and the corresponding weighted estimate holds for
\[
1\leq r\leq\frac{2(d+2)}3.
\]

Let \(\delta>0\) and set
\(D_\delta = \left\{z\in\mathbb C: \min_{0\leq j\leq d}|z-4j|\geq\delta\right\}\).
If \(d\geq5\) is odd, the preceding ranges remain valid when
the supremum is restricted to \(D_\delta\setminus\mathbb R\).  If
\(d\geq6\) is even, then
\(\sup_{z\in D_\delta\setminus\mathbb R} \|R_0(z)\|_{\mathcal B(\ell^p(\mathbb Z^d),\ell^{p'}(\mathbb Z^d))} <\infty\)
for
\(1\leq p\leq\frac{2(2d+5)}{2d+11}\),
and the corresponding weighted estimate holds for
\(1\leq r\leq\frac{2d+5}{3}\).
\end{theorem}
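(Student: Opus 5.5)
The approach is the standard Stein--Tomas / Kenig--Ruiz--Sogge scheme adapted to the lattice, as in \cite{TadanoTaira2019,Taira2021}, with Theorem~\ref{thm:local-decay} supplying the decay input.

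\textbf{Decomposition in energy and frequency.} Fix a smooth partition of unity on \(\mathbb T^d\) subordinate to small neighbourhoods of points of \(\mathbb T^d\). Near points where \(|h_0-\operatorname{Re}z|\gtrsim1\), the multiplier \(\chi(\xi)(h_0(\xi)-z)^{-1}\) is smooth uniformly in \(z\). Its kernel is therefore rapidly decaying, and it maps \(\ell^1\to\ell^\infty\), hence \(\ell^p\to\ell^{p'}\) for every \(p\leq2\). For the remaining pieces I will use the coarea formula
\[
\chi(D)R_0(z)=\int_{\mathbb R}\frac{1}{\lambda-z}\,\chi\,d\sigma_{M_\lambda}/|\nabla h_0|\,d\lambda .
\]
I then decompose the resolvent dyadically in \(|\lambda-\operatorname{Re}z|\sim2^{-k}\), writing the kernel as \(\sum_k K_k\).

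\textbf{Bounds for the dyadic pieces.} For the \(k\)-th piece I will prove two bounds.
\begin{itemize}
\item[(a)] An \(\ell^2\to\ell^2\) bound of size \(O(2^{k})\) by Plancherel, which can be improved to \(\ell^p\to\ell^{p'}\) via \(TT^*\)/restriction. Concretely, \(K_k\) is a smoothed version at scale \(2^{-k}\) of \(\widehat{\chi\,d\sigma_{M_\lambda}}\).
\item[(b)] An \(\ell^1\to\ell^\infty\) bound. By Theorem~\ref{thm:local-decay}, with decay rate \(\alpha\) (for instance \(\alpha=(d-2)/2\) up to logarithms, or \(\alpha=(d-m)/2+(m-1)/3\)), one gets \(|K_k(x)|\lesssim(1+|x|)^{-\alpha}\) for \(|x|\lesssim2^k\) and rapid decay beyond, since the \(\lambda\)-integration with the smooth dyadic weight gains. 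This is uniform in \(\lambda\) near \(\xi^0\), after checking that the constants in Theorem~\ref{thm:local-decay} are locally uniform in \(\lambda\).
\end{itemize}
Following the Keel--Tao / Bourgain interpolation trick (splitting further dyadically in \(|x|\sim2^j\), with \(\ell^1\to\ell^\infty\) size \(2^{-j\alpha}\cdot2^{-k}\cdot2^k\) and \(\ell^2\) size via Stein--Tomas at scale \(2^{j}\), giving \(2^{j}\)), the pieces sum in \(\ell^p\to\ell^{p'}\) exactly when \(1/p-1/p'\ge 2/(\alpha+2)\), i.e.\ \(p\le2(\alpha+2)/(\alpha+4)\). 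At the endpoint the sum is only restricted weak type. I would recover the strong endpoint by the bilinear interpolation (Bourgain's trick) argument, as in Keel--Tao, which works when \(\alpha>1\).

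\textbf{Worst exponent and ranges.} The remaining task is to identify the worst \(\alpha\) in each regime.
\begin{itemize}
\item Globally, critical points give \(\alpha=(d-2)/2\), which yields \(p\le2(d+2)/(d+6)\). The claimed range \(2(d+2)/(d+5)\) corresponds to \(\alpha=(d-1)/2\) with an extra gain, so at critical points I instead expect to exploit the \((\lambda-\lambda_c)\)-dependence. Near a nondegenerate critical point, the measure \(d\sigma_{M_\lambda}/|\nabla h_0|\) is the pushforward of a nondegenerate quadratic form, and its \(\lambda\)-integral gives the continuous-type Kenig--Ruiz--Sogge resolvent with range up to \(2d/(d+2)\), which is harmless.
\item At regular points the binding constraint comes from \(m\) large. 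Maximizing \(m\) occurs at energy \(2d\) with \(m=d\), giving \(\alpha=(d-1)/3\), while \(m=d-1\) is possible at energies \(2d\pm2\). I will compute \(\min_m \alpha(m)\) subject to the energy constraint \(\sum_{j\notin S}\cos(2\pi\xi_j)=\)const. Note that \(m=d\) forces \(\lambda=2d\), and \(\lambda=2d\) is a critical value iff \(d\) is even.
\end{itemize}
This parity is what separates the ranges: excluding \(\delta\)-neighbourhoods of \(4j\) removes \(2d\) only in even \(d\), leaving \(m\le d-1\) and the improved exponent \(2(2d+5)/(2d+11)\). Matching these numerics with \(2(d+2)/(d+5)\) (via \(\alpha=(d+1)/3\)-type accounting that includes the \(\lambda\)-integration gain of one derivative) is the step I expect to be the main obstacle, and I would verify it by direct computation.

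\textbf{Weighted estimates and \(d=4\).} The weighted estimates follow from Hölder's inequality, with \(1/r=1/p-1/2\). For \(d=4\) the decay is \((1+|x|)^{-1}(\log)^{m/4}\), and the dyadic sum diverges logarithmically at \(p=4/3\). Truncating at \(2^k\lesssim1/\operatorname{dist}(z,[0,16])\) and summing in an almost-orthogonal way (\(\ell^2\)-summation via \(TT^*\)) gives the \(\sqrt{\log}\) loss. For \(p<4/3\) the sum converges.
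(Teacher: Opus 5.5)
\textbf{The gap: the exponent conversion.} Your overall architecture is the paper's: smooth pieces away from the energy, regular patches handled by converting the decay in Theorem~\ref{thm:local-decay} into a resolvent bound, and critical points handled separately at the $2d/(d+2)$ level. You also correctly identify the worst regular points as those with $m=d$ on $M_{2d}$, and the parity as the question of whether $2d\in\{0,4,\dots,4d\}$. The genuine gap is the step converting a decay exponent $\alpha$ into an $\ell^p\to\ell^{p'}$ range, and this step fixes every endpoint in the theorem. Use the sizes you yourself record for the pieces $|x|\sim2^j$: norm $2^{-j\alpha}$ on $\ell^1\to\ell^\infty$ and norm $2^j$ on $\ell^2$. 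Interpolating with $1/p'=\theta/2$ gives $2^{j(\theta-\alpha(1-\theta))}$. This is summable iff $1/p-1/p'>1/(\alpha+1)$, i.e.\ $p<2(\alpha+1)/(\alpha+2)$, equivalently $r<2+2\alpha$. Your condition $1/p-1/p'\ge2/(\alpha+2)$ is strictly more restrictive. With it, the worst regular exponent $\alpha=(d-1)/3$ yields only $p\le2(d+5)/(d+11)<2(d+2)/(d+5)$, so your argument does not reach the stated range. The ``$\lambda$-integration gain of one derivative'' or ``$\alpha=(d+1)/3$-type accounting'' you propose to close the difference does not exist. Your $d=4$ paragraph silently uses the correct relation: your formula would give $6/5$ there, not $4/3$.

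\textbf{With the correct relation, the numerics close.} The correct relation is Proposition~\ref{prop:fourier-to-resolvent}: $r\le2+2k$, endpoint included. Then nothing remains to be matched.
For $d\ge5$, $k=(d-1)/3$ is the minimum over all regular patches, since $k_m-(d-1)/3=(d-m)/6\ge0$ and $(d-2)/2>(d-1)/3$ absorbs the logarithms. This gives $r\le2(d+2)/3$ and $p\le2(d+2)/(d+5)$.
For even $d\ge6$ away from thresholds, $m\le d-1$ gives $k_{d-1}=(2d-1)/6$, and $(d-2)/2>(2d-1)/6$ absorbs the low-flat logarithms; hence $r\le(2d+5)/3$.
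The critical-point range $r\le d$ covers $2(d+2)/3$ because $d\ge4$.

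\textbf{Other steps that are not sound as stated.}
\emph{The closed endpoint.} It does not come from a Keel--Tao/Bourgain bilinear argument requiring $\alpha>1$; that argument exploits the time-translation structure of a $TT^*$ Strichartz estimate. In the paper it comes from complex interpolation of an analytic family, as in Cuenin's proof, and this works for every $k>0$.
\emph{Critical points.} A Morse-lemma pushforward does not reduce $\chi(D)(h_0(D)-z)^{-1}$ on $\mathbb Z^d$ to a continuous KRS resolvent, because a nonlinear change of frequency variables is not intertwined with the lattice Fourier transform. The paper instead uses the stationary-phase dispersive bound $\langle t\rangle^{-d/2}$ together with the Keel--Tao inhomogeneous endpoint estimate (Proposition~\ref{prop:critical-resolvent}).
\emph{The even case away from thresholds.} You also need to remove, by a uniformly smooth multiplier, the frequency region where $h_0$ lies within $\delta/4$ of a threshold. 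This is necessary because the $m\le d-1$ decay constants degenerate as $\lambda\to2d$.
\emph{The $d=4$ endpoint.} At $p=4/3$ with $m=4$, your inputs give the $j$-th piece an interpolated norm of about $j^{1/2}$. Even square-summing over $j\lesssim\log(1/\operatorname{dist}(z,[0,16]))$ then produces a full logarithm, not its square root. The square root in Proposition~\ref{prop:log-endpoint} comes from a different route:
trade the logarithm for a power loss $\eta$ with constant $\eta^{-1}$;
track this through the interpolation as $(A/\eta)^{1/(2-\eta)}$;
interpolate once more with the trivial $\operatorname{dist}^{-1}$ bound on $\ell^2$;
choose $\eta\simeq1/\log(2+\operatorname{dist}^{-1})$.
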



\begin{theorem}[Necessary conditions]\label{thm:sharpness}
Let \(1\leq p\leq2\).  If
\(\sup_{z\in\mathbb C\setminus\mathbb R} \|R_0(z)\|_{\mathcal B(\ell^p,\ell^{p'})}<\infty\),
then
\(p\leq\frac{2(d+2)}{d+5}\).
For \(0<\delta<1\), if the supremum is required only on
\(D_\delta\setminus\mathbb R\), the same necessary condition holds when
\(d\geq5\) is odd, whereas
\(p\leq\frac{2(2d+5)}{2d+11}\)
is necessary when \(d\geq4\) is even.  Consequently, the global endpoint
ranges in Theorem~\ref{thm:resolvent} are sharp for \(d\geq5\), and the
away-from-threshold endpoint ranges are sharp for \(d\geq5\) whenever
\(0<\delta<1\).  In dimension \(d=4\), the global power range is sharp;
no assertion is made that the logarithmic endpoint loss is necessary.
\end{theorem}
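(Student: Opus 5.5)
Necessity will be shown by a Knapp-type construction adapted to the anisotropic geometry near the worst regular points. By duality and since \(R_0(\bar z)=R_0(z)^*\), a uniform \(\ell^p\to\ell^{p'}\) bound for \(z=\lambda\pm i\epsilon\) passes to the limit. In the limit it controls the imaginary part \(\operatorname{Im}R_0(\lambda+i0)=\pi\,\delta(h_0-\lambda)\), that is, the operator \(f\mapsto \widehat{\,d\sigma_{M_\lambda}/|\nabla h_0|\,}\ast f\), uniformly in \(\lambda\). Concretely, for a nonnegative bump \(\chi\) I would use
\[
\langle \chi(D)\,\operatorname{Im}R_0(\lambda+i\epsilon)\,\chi(D) g,g\rangle
=\int \frac{\epsilon\,\chi^2|\widehat g|^2}{(h_0-\lambda)^2+\epsilon^2}\,d\xi .
\]
For test functions \(g\) whose Fourier transform is concentrated in a thin anisotropic box of thickness \(\epsilon\) around a piece of \(M_\lambda\), this quantity is \(\gtrsim \epsilon^{-1}\|\widehat g\|_{L^2}^2\). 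It is also bounded by \(C\|g\|_{\ell^p}^2\). Equivalently, I would test \(\langle R_0(z)g,g\rangle\) directly and take the imaginary part.

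The construction is as follows. Fix \(\lambda=2d\), or \(\lambda\) near \(2d\) in the \(D_\delta\) case, and a regular point \(\xi^0\in M_\lambda\) with the maximal number \(m\) of coordinates satisfying \(\cos(2\pi\xi^0_j)=0\), i.e.\ \(\xi^0_j=\pm1/4\). If \(\lambda=2d\), one can take all \(d\) coordinates equal to \(1/4\); the gradient there is nonzero, so \(m=d\). Near \(\xi^0\), write \(\xi=\xi^0+\eta\). Then \(h_0-\lambda=c\sum_{j\le m}(\eta_j-\tfrac{2\pi^2}{3}\eta_j^3+\dots)+\sum_{j>m}(\text{quadratic})\). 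I would take \(\widehat g\) to be the indicator of a box with side \(\epsilon\) in the normal direction \((1,\dots,1,0,\dots)\), sides \(\epsilon^{1/3}\) in the \(m-1\) tangential cubic directions, and sides \(\epsilon^{1/2}\) in the \(d-m\) quadratic directions. On this box \(|h_0-\lambda|\lesssim\epsilon\); checking this requires care because mixed quadratic cross terms vanish only at \(\xi^0\), and I would localize accordingly. The box volume is \(V=\epsilon^{1+(m-1)/3+(d-m)/2}\). Then \(\|\widehat g\|_2^2=V\), and by Hausdorff–Young-type sharpness for box indicators (a smoothed indicator, so that \(g\) is essentially \(V\) times a bump on the dual box of volume \(V^{-1}\)) we get \(\|g\|_{\ell^p}\approx V\cdot V^{-1/p}=V^{1/p'}\). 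The resulting inequality is \(\epsilon^{-1}V\lesssim V^{2/p'}\). Letting \(\epsilon\to0\) gives \(1-2/p'\ge 1/\log_\epsilon V^{-1}\)-type exponent relations; explicitly, \(1+(1-2/p')(\ldots)\) with exponent \(a=1+(m-1)/3+(d-m)/2\) requires \(a(1-2/p')\ge1\), i.e.\ \(2/p-1\ge1/a\).

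For the global statement, take \(m=d\): then \(a=(d+2)/3\), and \(2/p-1\ge 3/(d+2)\) is exactly \(p\le 2(d+2)/(d+5)\). For the \(D_\delta\) statement I must avoid thresholds \(4j\). If \(d\) is odd, \(2d\) is not a multiple of \(4\), so \(\lambda=2d\) lies in \(D_\delta\) for \(\delta<1\) and the same bound follows. If \(d\) is even, \(2d=4(d/2)\) is a threshold, and the spectral parameter is kept at \(\lambda=2d\pm1\), which lies at distance \(1\ge\delta\) from the thresholds. There, at most \(d-1\) coordinates can equal \(\pm1/4\), with one remaining coordinate \(\xi_d\) satisfying \(-2\cos 2\pi\xi_d=\pm1\), which is a quadratic nondegenerate direction. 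This gives \(m=d-1\) and \(a=1+(d-2)/3+1/2=(2d+5)/6\), so \(2/p-1\ge 6/(2d+5)\), i.e.\ \(p\le 2(2d+5)/(2d+11)\). The final sharpness assertions then follow by comparison with Theorem~\ref{thm:resolvent}.

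The main obstacle is making the lower bound on \(\int\epsilon\chi^2|\widehat g|^2/((h_0-\lambda)^2+\epsilon^2)\) rigorous with the smoothed box. Specifically, I must verify \(|h_0-\lambda|\lesssim\epsilon\) on the anisotropic box after the local change of variables that straightens the normal direction; the cubic terms in the \(m-1\) tangential directions must be exactly of size \(\epsilon\). I must also control the Schwartz tails of \(g\) in \(\ell^p\) on the lattice. I would first apply the paper's local change of variables, so that \(h_0-\lambda=\eta_1+\sum\pm\eta_j^3+\sum\pm\eta_k^2\) up to harmless factors, and define the box in those coordinates.
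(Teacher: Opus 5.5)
Your strategy is the paper's. You test \(\operatorname{Im}\langle R_0(\lambda+i\epsilon)g,g\rangle\) against a smoothed anisotropic box on which \(|h_0-\lambda|\lesssim\epsilon\), bound \(\|g\|_{\ell^p}\lesssim V^{1-1/p}\) via Schwartz tails, and use the same base points and exponents \(a=(d+2)/3\) and \(a=(2d+5)/6\). The global and odd-dimensional cases go through as you describe.

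The even-dimensional case, however, fails as written. At \(\xi^0=(1/4,\dots,1/4,\xi^0_d)\) with \(\lambda=2d\pm1\), one has \(|\sin(2\pi\xi^0_d)|=\sqrt3/2\neq0\). Hence \(h_0-\lambda\) contains a linear term \(\pm2\sqrt3\pi\eta_d\), which your expansion \(\sum_{j>m}(\text{quadratic})\) omits. The normal is therefore \(\nabla h_0(\xi^0)\propto(1,\dots,1,\pm\sqrt3/2)\), not \((1,\dots,1,0)\). Suppose the \(\epsilon\)-thin form is \(\sum_{j\leq d-1}\eta_j\) and \(|\eta_d|\leq\epsilon^{1/2}\). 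Then \(|h_0-\lambda|\) is of size \(\epsilon^{1/2}\) on most of the box. The \(\eta_d\)-integral of \(\epsilon/((h_0-\lambda)^2+\epsilon^2)\) is then \(O(1)\) rather than \(\epsilon^{-1/2}\), so the lower bound drops to about \(\epsilon^{-1/2}V\). This test only yields \(p\leq 2(2d+5)/(2d+8)\), which is weaker even than the global bound.

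The fix is to take the \(\epsilon\)-thin linear form to be the true normal form \(\sum_{j\leq d-1}\eta_j+\tfrac{\sqrt3}{2}\eta_d\) (for \(\lambda=2d-1\), \(\xi^0_d=1/6\)). Keep \(\eta_1,\dots,\eta_{d-2}\) at scale \(\epsilon^{1/3}\), and add one further form controlling \(\eta_d\) at scale \(\epsilon^{1/2}\). The paper uses \(\sum_{j\leq d-1}\eta_j\); \(\eta_d\) itself also works. Then \(|\eta_d|\lesssim\epsilon^{1/2}\) and \(|\eta_{d-1}|\lesssim\epsilon^{1/3}\) on the box, so \(|h_0-\lambda|\lesssim\epsilon\) and the volume is \(\simeq\epsilon^{(2d+5)/6}\).

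Alternatively, your box as literally described is correct at \(\xi^0=(1/4,\dots,1/4,0)\) with \(\lambda=2d-2\). There \(\sin(2\pi\xi^0_d)=0\), the normal really is \((1,\dots,1,0)\), and the \(\eta_d\)-term is purely quadratic. For even \(d\) this energy is at distance \(2\) from the thresholds.

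Finally, do not define the box in nonlinearly straightened coordinates, as your last paragraph suggests. The lattice bound \(\|g\|_{\ell^p}\lesssim V^{1-1/p}\) is immediate only when \(\widehat g\) is a product of bumps of linear forms in \(\xi\). Since \(h_0=\sum_j(2-2\cos2\pi\xi_j)\) is separable, there are no mixed cross terms to worry about. A Taylor expansion in the original coordinates is all that is needed.
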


\begin{remark}\label{rem:d4-log}
The square-root logarithmic loss at the endpoint \(p=4/3\) in
Theorem~\ref{thm:resolvent} is the loss produced by the general
Fourier-decay argument used here.  Removing it would require an additional
endpoint analysis beyond the local Fourier-decay bounds proved in this
paper.  Since such an analysis is not included, no lossless
\(\ell^{4/3}(\mathbb Z^4)\)-to-\(\ell^4(\mathbb Z^4)\) resolvent
estimate is claimed.
\end{remark}

\subsection*{Proof strategy and organization}

The resolvent part of the argument uses two standard reductions.  On a
regular frequency patch, the Fourier decay of the localized level-set
measure is converted into a weighted resolvent estimate by the
regular-level argument of Cuenin \cite{Cuenin2019} and Taira
\cite{Taira2021}.  A pure-power decay exponent \(k>0\) yields the weighted
range \(r\leq2+2k\), and the corresponding unweighted exponent is
determined by \(1/p=1/2+1/r\).  Near a critical point, this reduction is not
applicable because the gradient of the defining function vanishes.
All critical points of the discrete dispersion relation are
nondegenerate, so they are instead controlled by Taira's
critical-point resolvent estimate, based on stationary phase and the
endpoint Strichartz argument of Keel and Tao \cite{KeelTao1998}.
Accordingly, the principal task in this paper is the local
Fourier-decay theorem, Theorem~\ref{thm:local-decay}.

On a regular patch where all sine coordinates are nonzero, we use the
cosine variables \(c_j=\cos(2\pi\xi_j)\).  In these coordinates the
level-set equation becomes an affine hyperplane constraint.  The inverse
coordinate branch has a nonvanishing quadratic term when
\(c_j^0\neq0\), while its first nonlinear term is cubic when
\(c_j^0=0\).  A reduction from general amplitudes to product amplitudes,
followed by a Fourier representation of the hyperplane constraint,
reduces the resulting surface integral to products of one-dimensional
quadratic and cubic oscillatory integrals.  For \(m\geq5\), H\"older's
inequality can be applied above the borderline exponent of the cubic
factors and gives the pure-power decay in
Theorem~\ref{thm:local-decay}.  For \(m\leq4\), the borderline
\(L^4\) estimate produces the stated logarithmic factors.  Patches on
which some sine coordinates vanish are treated by mixed local
coordinates and a normal versus nonstationary decomposition.  Near a
threshold critical point, a dyadic rescaling reduces the regular part
of the singular level set to a uniformly controlled family of
quadratic hypersurfaces.

The necessary conditions are obtained from anisotropic frequency boxes
on which the symbol remains within \(O(\varepsilon)\) of a fixed
energy.  The normal direction has scale \(\varepsilon\), while cubic
and quadratic directions have scales \(\varepsilon^{1/3}\) and
\(\varepsilon^{1/2}\), respectively.  The resulting box volumes give
the endpoint obstructions in Theorem~\ref{thm:sharpness}.

Section~\ref{sec:applications} derives the applications of the main
theorems.  Section~\ref{sec:resolvent} records the resolvent reductions.
Section~\ref{sec:basic} establishes the product-amplitude reduction and
the required nonstationary-phase estimate.  Section~\ref{sec:1d} proves
the one-dimensional cubic and quadratic bounds, and
Section~\ref{sec:separable} combines them into separable hyperplane
estimates.  Section~\ref{sec:cos} develops the local coordinate systems,
proves the Fourier-decay estimates, and derives the sufficient resolvent
bounds.  Finally, Section~\ref{sec:sharp} proves the necessary conditions
and the additional fixed-energy obstructions.

\section{Applications}\label{sec:applications}

We record several consequences of Theorems~\ref{thm:local-decay} and
\ref{thm:resolvent}: Birman--Schwinger bounds and weak-coupling
stability, Kato smoothing, complex-eigenvalue estimates, and improved
thin spectral projection estimates relevant to the Anderson model.

\subsection{Birman--Schwinger bounds and weakly coupled potentials}

For a complex-valued function \(V\) on \(\mathbb Z^d\), define
\[
V^{1/2}(n)
=
\begin{cases}
V(n)/|V(n)|^{1/2},&V(n)\neq0,\\
0,&V(n)=0.
\end{cases}
\]
Thus \(V=|V|^{1/2}V^{1/2}\).

\begin{proposition}[Birman--Schwinger bounds]
\label{prop:birman-schwinger-application}
Suppose that either
\[
d=4,\qquad 1\leq s<2,
\]
or
\[
d\geq5,\qquad
1\leq s\leq\frac{d+2}{3}.
\]
Then, for every \(V\in\ell^s(\mathbb Z^d)\),
\[
\sup_{z\in\mathbb C\setminus\mathbb R}
\left\|
|V|^{1/2}R_0(z)V^{1/2}
\right\|_{\mathcal B(\ell^2)}
\leq
C\|V\|_{\ell^s}.
\]
If \(V\) is real-valued, then there exists \(\gamma_0>0\) such that
\(H_0+\gamma V\) is unitarily equivalent to \(H_0\) whenever
\(\gamma\in\mathbb R\) and \(|\gamma|<\gamma_0\).
\end{proposition}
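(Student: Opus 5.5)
The plan is to apply the weighted estimate of Theorem~\ref{thm:resolvent} with \(W_1=|V|^{1/2}\) and \(W_2=V^{1/2}\), and then run the standard Kato--Birman--Schwinger smoothing argument for the weak-coupling statement.

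\textbf{Step 1: the Birman--Schwinger bound.} Set \(r=2s\). Since \(|W_1|=|W_2|=|V|^{1/2}\), we have
\[
\|W_1\|_{\ell^r}=\|W_2\|_{\ell^r}=\|V\|_{\ell^s}^{1/2}.
\]
The admissible ranges match as follows.
\begin{itemize}
\item For \(d=4\), the condition \(1\leq s<2\) gives \(2\leq r<4\). Theorem~\ref{thm:resolvent} covers this range, and it also covers \(1\leq r<2\) by inclusion, although we do not need that case.
\item For \(d\geq5\), the condition \(s\leq(d+2)/3\) gives \(r\leq 2(d+2)/3\), which is exactly the weighted range in Theorem~\ref{thm:resolvent}.
\end{itemize}
Multiplying the two weight norms yields the bound \(C\|V\|_{\ell^s}\), uniformly over \(z\in\mathbb C\setminus\mathbb R\). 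This step is essentially immediate.

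\textbf{Step 2: weak coupling.} Let \(V\) be real and write \(A=|V|^{1/2}\), \(B=V^{1/2}=\operatorname{sgn}(V)\,A\), so that \(\gamma V=\gamma A B\). Polarization gives bounds for \(\|AR_0(z)A\|\) as well; in fact it is directly covered by Step 1 applied with \(|V|\).

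\emph{Smoothing.} A uniform bound on \(AR_0(z)A\) over \(z\notin\mathbb R\) implies, by Kato's theorem, that \(A\) is \(H_0\)-smooth. Concretely, the identity \(\operatorname{Im}R_0(z)=\varepsilon R_0(z)^*R_0(z)\) for \(z=\lambda+i\varepsilon\) yields
\[
\sup_{\varepsilon>0}\int_{\mathbb R}\|AR_0(\lambda\pm i\varepsilon)f\|^2\,d\lambda\lesssim\|f\|^2 .
\]

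\emph{Resolvent expansion.} For \(|\gamma|\,\sup_z\|AR_0(z)A\|<1\), the operator \(1+\gamma BR_0(z)A\) is invertible with uniformly bounded inverse. The sign factor is a unitary-like contraction, so \(\|BR_0A\|\le\|AR_0A\|\); this needs \(\operatorname{sgn}(V)\) to commute with \(A\), which holds since both are multiplication operators. The resolvent identity then gives
\[
AR_\gamma(z)A=AR_0(z)A\,(1+\gamma BR_0(z)A)^{-1}\cdot(\text{sign}),
\]
which is uniformly bounded. Hence \(A\) is also \(H_\gamma\)-smooth, where \(H_\gamma=H_0+\gamma V\).

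\emph{Conclusion.} Kato's theory of smooth perturbations (Kato 1966, Reed--Simon IV Thm XIII.26) then says the wave operators \(W_\pm(H_\gamma,H_0)\) exist and are unitary, i.e. complete with full range. Therefore \(H_\gamma\) is unitarily equivalent to \(H_0\), and we may take \(\gamma_0=1/\sup_z\|AR_0(z)A\|\).

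\textbf{Main obstacle.} The delicate point is boundedness of \(V\) needed for self-adjointness of \(H_\gamma\). This is fine because \(V\in\ell^s\subset\ell^\infty\), so \(H_\gamma\) is bounded self-adjoint and the factorization \(V=A\,B\) consists of bounded operators. The other point is checking the hypotheses of Kato's theorem in the bounded-operator setting; I expect it to apply verbatim.
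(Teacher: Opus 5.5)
Your proof is correct and follows the paper's route. Step~1 is exactly the paper's application of the weighted estimate in Theorem~\ref{thm:resolvent} with \(W_1=|V|^{1/2}\), \(W_2=V^{1/2}\), \(r=2s\), and Step~2 unpacks the Kato smooth-perturbation argument behind the weak-coupling criterion that the paper simply cites as \cite[Lemma~1.7]{TadanoTaira2019}; one cosmetic slip is that the resolvent identity gives exactly \(AR_\gamma(z)A=AR_0(z)A\,(1+\gamma BR_0(z)A)^{-1}\) with no extra sign factor, which does not affect your uniform bound.
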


\begin{proof}
Apply the weighted estimate in Theorem~\ref{thm:resolvent} with
\[
W_1=|V|^{1/2},
\qquad
W_2=V^{1/2},
\qquad
r=2s.
\]
Since
\[
\|W_1\|_{\ell^{2s}}
\|W_2\|_{\ell^{2s}}
=
\|V\|_{\ell^s},
\]
the asserted Birman--Schwinger bound follows.  For real-valued \(V\),
the final assertion follows from the standard weak-coupling criterion;
see \cite[Lemma~1.7]{TadanoTaira2019}.
\end{proof}

For \(d\geq5\), the exponent \((d+2)/3\) is optimal among assumptions
of the form \(V\in\ell^s\).  Indeed, if \(s>(d+2)/3\), choose
\[
\frac{d+2}{3}<q<s.
\]
By \cite[Theorem~1.11(iii)]{TadanoTaira2019}, there is a potential
\(V\in\ell^{q,\infty}\) for which the uniform Birman--Schwinger bound
fails.  Since
\[
\ell^{q,\infty}\subset\ell^s,
\]
the bound cannot hold for all \(V\in\ell^s\).

\begin{proposition}[Kato smoothing]\label{prop:kato-smoothing}
Suppose that
\[
d=4,\qquad 1\leq r<4,
\]
or
\[
d\geq5,\qquad
1\leq r\leq\frac{2(d+2)}{3}.
\]
Then, for \(W\in\ell^r(\mathbb Z^d)\),
\[
\int_{\mathbb R}
\left\|
W e^{-itH_0}f
\right\|_{\ell^2}^2\,dt
\leq
C
\|W\|_{\ell^r}^2
\|f\|_{\ell^2}^2.
\]
\end{proposition}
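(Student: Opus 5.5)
This is Kato's smoothing theorem, and I would derive it from the uniform weighted resolvent bound in Theorem~\ref{thm:resolvent} by the standard $TT^*$ argument. The one-sided bound $\|W R_0(z) W^*\|$ is not enough; what the criterion needs is control of the spectral density $\operatorname{Im} R_0(\lambda+i\varepsilon)$ sandwiched between $W$ and $\bar W$.

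Concretely, the criterion (see Reed--Simon XIII.7, or Kato's original argument) states that $W$ is $H_0$-smooth with
\[
\int_{\mathbb R}\|We^{-itH_0}f\|_{\ell^2}^2\,dt\leq 2\pi\, a\,\|f\|_{\ell^2}^2
\]
whenever
\[
a=\sup_{\lambda\in\mathbb R,\ \varepsilon>0}\frac1{2\pi}\left\|W\bigl(R_0(\lambda+i\varepsilon)-R_0(\lambda-i\varepsilon)\bigr)\bar W\right\|_{\mathcal B(\ell^2)}<\infty.
\]
I would verify this through Plancherel in $t$. Take $f\in\ell^2$ and $W$ finitely supported, so that every operator involved is bounded. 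Insert the damping factor $e^{-\varepsilon|t|}$ into the time integral. The Fourier transform in $t$ of $e^{-\varepsilon|t|}We^{-itH_0}f$ is then $W$ applied to a combination of $R_0(\lambda\pm i\varepsilon)f$. Pairing with $g$ and using duality reduces the $L^2_t\ell^2$ norm to the quantity $\langle W\,\operatorname{Im}R_0(\lambda+i\varepsilon)\,\bar W g,g\rangle$. Letting $\varepsilon\to0$ and applying monotone convergence gives the estimate. Finally, density of finitely supported $W$ in $\ell^r$ for $r<\infty$, together with Fatou's lemma, removes the finiteness assumption on $W$.

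To bound $a$, I apply the weighted estimate in Theorem~\ref{thm:resolvent} twice, once at $z=\lambda+i\varepsilon$ and once at $z=\lambda-i\varepsilon$, taking $W_1=W$ and $W_2=\bar W$. Since $\|\bar W\|_{\ell^r}=\|W\|_{\ell^r}$, this gives $a\leq C\|W\|_{\ell^r}^2$. The admissible ranges are exactly those stated:
\begin{itemize}
\item for $d\geq5$, Theorem~\ref{thm:resolvent} provides the weighted estimate for $1\leq r\leq 2(d+2)/3$, uniformly in $z\in\mathbb C\setminus\mathbb R$;
\item for $d=4$, it provides the estimate for $2\leq r<4$ directly and for $1\leq r<2$ via $\ell^r\subset\ell^2$.
\end{itemize}
The endpoint $r=4$ in $d=4$ is excluded because of the logarithmic loss there.

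The only delicate point is making the $TT^*$/Plancherel reduction rigorous: justifying the limit $\varepsilon\to0$, and handling unbounded multiplication by $W$ when $r>2$. Both are dealt with by the truncation to finitely supported $W$ described above, so I expect no real obstacle. All of the analytic content sits in the uniform weighted resolvent bound of Theorem~\ref{thm:resolvent}.
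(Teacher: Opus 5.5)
Your proposal is correct and matches the paper's proof: the paper also applies the weighted estimate of Theorem~\ref{thm:resolvent} with $W_1=W$, $W_2=\overline W$ and then invokes the standard Kato smoothing criterion, citing Kato--Yajima rather than spelling out the Plancherel argument as you do. One minor point is that $\ell^r(\mathbb Z^d)\subset\ell^\infty(\mathbb Z^d)$, so multiplication by $W$ is already bounded on $\ell^2$ and the unboundedness concern for $r>2$ does not arise, though your truncation does no harm.
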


\begin{proof}
Apply the weighted estimate in Theorem~\ref{thm:resolvent} with
\(W_1=W\) and \(W_2=\overline W\), and then use the standard Kato
smoothing criterion; see \cite{KatoYajima1989}.
\end{proof}

\subsection{Eigenvalue bounds for complex potentials}

The following is the discrete analogue of the usual deduction of
complex eigenvalue bounds from uniform Sobolev estimates; compare
\cite{Frank2011}.

\begin{proposition}[Location of complex eigenvalues]
\label{prop:complex-eigenvalues}
Let \(d\geq5\), set
\[
s_0=\frac{d+2}{3},
\]
and suppose that \(s>s_0\) and \(V\in\ell^s(\mathbb Z^d)\).  If
\(z\in\mathbb C\setminus[0,4d]\) is an eigenvalue of \(H_0+V\), then
\[
\operatorname{dist}(z,[0,4d])^{s-s_0}
\leq
C_{d,s}\|V\|_{\ell^s}^{\,s}.
\]
At the endpoint \(s=s_0\), there is a constant \(c_d>0\) such that
\[
\|V\|_{\ell^{s_0}}<c_d
\]
excludes all eigenvalues of \(H_0+V\) in
\(\mathbb C\setminus[0,4d]\).
\end{proposition}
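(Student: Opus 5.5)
The plan follows the Birman--Schwinger principle, as in Frank's argument. Suppose \(z\in\mathbb C\setminus[0,4d]\) is an eigenvalue of \(H_0+V\) with eigenvector \(u\in\ell^2\). Put \(\phi=|V|^{1/2}u\). Since \(u=-R_0(z)Vu\), we get
\[
\phi=-|V|^{1/2}R_0(z)V^{1/2}\phi,
\]
and \(\phi\neq0\), because otherwise \(Vu=0\) and then \(u=0\). Hence
\[
1\leq\bigl\||V|^{1/2}R_0(z)V^{1/2}\bigr\|_{\mathcal B(\ell^2)}.
\]
For the endpoint \(s=s_0\), Proposition~\ref{prop:birman-schwinger-application}, valid for \(d\geq5\) and \(s_0=(d+2)/3\), gives \(1\leq C\|V\|_{\ell^{s_0}}\) uniformly in \(z\in\mathbb C\setminus\mathbb R\). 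Real \(z\) outside \([0,4d]\) is covered by the stated extension of the estimates to the full resolvent set. Taking \(c_d=1/C\) therefore excludes all eigenvalues.

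For \(s>s_0\), we need a \(z\)-dependent bound of the form
\[
\bigl\||V|^{1/2}R_0(z)V^{1/2}\bigr\|\leq C\,\eta^{-(s-s_0)/s}\|V\|_{\ell^s},
\qquad \eta=\operatorname{dist}(z,[0,4d]).
\]
Substituting it into \(1\leq\|\cdots\|\) and raising to the power \(s\) gives \(\eta^{s-s_0}\leq C^s\|V\|_{\ell^s}^s\), which is the claim. Large \(\eta\) is harmless: the trivial bound \(\|R_0(z)\|_{\ell^2\to\ell^2}=1/\eta\) together with the \(L^\infty\)-type bound gives decay there. The weighted bound is obtained by complex interpolation (Stein, in the weights) between two estimates:
\begin{itemize}
\item the \(r=2s_0\) endpoint weighted estimate of Theorem~\ref{thm:resolvent}, with bound \(C\|W_1\|_{\ell^{2s_0}}\|W_2\|_{\ell^{2s_0}}\);
\item the trivial estimate \(\|W_1R_0(z)W_2\|\leq\eta^{-1}\|W_1\|_{\ell^\infty}\|W_2\|_{\ell^\infty}\).
\end{itemize}
With \(1/(2s)=\theta/(2s_0)\), that is \(\theta=s_0/s\), the interpolated constant is \(\eta^{-(1-\theta)}=\eta^{-(s-s_0)/s}\), which is the exponent required.

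The interpolation is carried out concretely as follows. Fix \(V\) normalized in \(\ell^s\) and form the analytic family
\[
W_1^{(\zeta)}=|V|^{s\zeta/(2s_0)},\qquad W_2^{(\zeta)}=\operatorname{sgn}(V)\,|V|^{s\zeta/(2s_0)}
\]
on the strip \(0\leq\operatorname{Re}\zeta\leq1\), paired against finitely supported test functions. At \(\operatorname{Re}\zeta=1\) one has \(\|W^{(\zeta)}\|_{\ell^{2s_0}}^2=\|V\|_s^{s/s_0}\). At \(\operatorname{Re}\zeta=0\) the weights are bounded by \(1\) on the support. Three-lines at \(\zeta=\theta\) then yields the bound. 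Truncating \(V\) to finite support first justifies the analyticity and boundedness of the family, and a limiting argument removes the truncation.

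The main obstacle is bookkeeping rather than analysis: the weighted endpoint estimate is needed for general complex weights \(W_2=V^{1/2}\), not only \(\overline{W_1}\), and the \(\ell^\infty\) endpoint contributes no \(\|V\|\) factor. The scaling therefore has to be checked by normalizing \(\|V\|_s=1\) and restoring homogeneity at the end, and the restoration must give \(\|V\|_{\ell^s}^{1}\) in the bound before it is raised to the power \(s\).
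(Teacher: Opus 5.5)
Your proposal is correct and follows the paper's argument. Both use the Birman--Schwinger principle, interpolate the uniform endpoint estimate against the trivial bound $\|R_0(z)\|_{\mathcal B(\ell^2)}\leq\eta^{-1}$ with parameter $\theta=s_0/s$, raise the result to the power $s$, and take the endpoint case from Proposition~\ref{prop:birman-schwinger-application}. The only difference is that the paper interpolates the unweighted bounds by Riesz--Thorin (from $\ell^{p_0}\to\ell^{p_0'}$ and $\ell^2\to\ell^2$ to $\ell^{p_s}\to\ell^{p_s'}$, where $1/p_s=1/2+1/(2s)$) and then applies H\"older, so your analytic family in the weights and the truncation step are not needed.
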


\begin{proof}
Let
\[
\delta_z=\operatorname{dist}(z,[0,4d])
\]
and define \(p_s\) by
\[
\frac1{p_s}
=
\frac12+\frac1{2s}.
\]
Interpolating the endpoint estimate in
Theorem~\ref{thm:resolvent} with
\[
\|R_0(z)\|_{\mathcal B(\ell^2)}
\leq
\delta_z^{-1}
\]
gives
\[
\|R_0(z)\|_{\mathcal B(\ell^{p_s},\ell^{p_s'})}
\leq
C_{d,s}
\delta_z^{-(1-s_0/s)}.
\]
H\"older's inequality therefore yields
\[
\left\|
|V|^{1/2}R_0(z)V^{1/2}
\right\|_{\mathcal B(\ell^2)}
\leq
C_{d,s}
\delta_z^{-(1-s_0/s)}
\|V\|_{\ell^s}.
\]
If \(z\) is an eigenvalue, the Birman--Schwinger principle implies that
the left-hand side is at least \(1\).  Raising the resulting inequality
to the power \(s\) proves the claim.  The endpoint assertion follows
directly from Proposition~\ref{prop:birman-schwinger-application}.
\end{proof}

\begin{remark}
In dimension \(d=4\), the logarithmic endpoint estimate in
Theorem~\ref{thm:resolvent} implies that
every eigenvalue \(z\in\mathbb C\setminus[0,16]\) of \(H_0+V\), with
\(V\in\ell^2(\mathbb Z^4)\), satisfies
\[
1
\leq
C\|V\|_{\ell^2}
\left(
\log\left(
2+\frac1{\operatorname{dist}(z,[0,16])}
\right)
\right)^{1/2}.
\]
Thus, for small \(\|V\|_{\ell^2}\), any such eigenvalue must lie
exponentially close to the free spectrum.
\end{remark}

\subsection{Thin spectral projections and the Anderson model}

Set
\[
\mathcal E_d
=
\{0,4,\dots,4d\}\cup\{2d\}
\]
and
\[
q_d^\ast
=
\begin{cases}
4,&d=4,\\[3pt]
\dfrac{2(2d+5)}{2d-1},&d\geq5.
\end{cases}
\]

\begin{proposition}[Thin spectral projections]
\label{prop:thin-spectral-projections}
Let
\[
I\Subset(0,4d)\setminus\mathcal E_d.
\]
For every \(q>q_d^\ast\), there exist \(C_{I,q}\) and
\(\alpha_I>0\) such that
\[
\left\|
\mathbf 1_{[E-\alpha,E+\alpha]}(H_0)
\right\|_{\mathcal B(\ell^2,\ell^q)}
\leq
C_{I,q}\alpha^{1/2}
\]
whenever \(E\in I\) and \(0<\alpha<\alpha_I\).
\end{proposition}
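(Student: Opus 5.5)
The plan is to reduce the thin projection bound to a \(TT^*\) estimate and then run a Stein--Tomas-type dyadic argument, with the kernel decay supplied by Theorem~\ref{thm:local-decay}. Write \(P_{E,\alpha}=\mathbf 1_{[E-\alpha,E+\alpha]}(H_0)\). Since \(P_{E,\alpha}\) is an orthogonal projection,
\[
\|P_{E,\alpha}\|_{\mathcal B(\ell^2,\ell^q)}^2=\|P_{E,\alpha}\|_{\mathcal B(\ell^{q'},\ell^q)},
\]
so it suffices to prove \(\|P_{E,\alpha}\|_{\mathcal B(\ell^{q'},\ell^q)}\leq C\alpha\). By the coarea formula, \(P_{E,\alpha}\) is convolution with
\[
K_{E,\alpha}(x)=\int_{E-\alpha}^{E+\alpha}\int_{M_\lambda}e^{2\pi i x\cdot\xi}\,\frac{d\sigma_{M_\lambda}(\xi)}{|\nabla h_0(\xi)|}\,d\lambda .
\]

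\textbf{Step 1: uniform decay of the level-set kernel.} Fix a compact neighbourhood \(I'\) of \(I\) that still avoids \(\mathcal E_d\). For \(\lambda\in I'\), the level \(\lambda\) is not a critical value, so every point of \(M_\lambda\) is regular and \(|\nabla h_0|\) is bounded below. Moreover \(m=\#\{j:\cos(2\pi\xi_j)=0\}\leq d-1\): if all \(d\) cosines vanished, then \(\lambda=2d\), which is excluded.

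The decay exponent in Theorem~\ref{thm:local-decay} is nonincreasing in \(m\). Hence the worst case is \(m=d-1\), giving for \(d\geq5\)
\[
k=\frac12+\frac{d-2}{3}=\frac{2d-1}{6},
\]
and for \(d=4\) the rate \((1+|x|)^{-1}\) with a \(\log^{3/4}\) factor. Covering \(\bigcup_{\lambda\in I'}M_\lambda\) by finitely many patches, and using a partition of unity depending smoothly on \(\lambda\), I expect
\[
\Bigl|\int_{M_\lambda}e^{2\pi ix\cdot\xi}\,\frac{d\sigma_{M_\lambda}}{|\nabla h_0|}\Bigr|\leq C(1+|x|)^{-k}
\]
uniformly in \(\lambda\in I'\), with an extra \(\log^{3/4}\) factor when \(d=4\). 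This in turn gives \(|K_{E,\alpha}(x)|\leq C\alpha(1+|x|)^{-k}\).

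The main obstacle is this uniformity in \(\lambda\). Theorem~\ref{thm:local-decay} is stated for a single level and a fixed cutoff. I will need to check that its proof, which uses the cosine coordinates, the product-amplitude reduction and the one-dimensional cubic and quadratic bounds, gives constants depending only on finitely many derivatives of the amplitude and on lower bounds for the nondegeneracy quantities. Those quantities are \(|\sin(2\pi\xi_j)|\), the quadratic coefficients where \(c_j\neq0\), and the cubic coefficients where \(c_j=0\), all of which are stable for \(\lambda\) in the compact set \(I'\). Patches on which the cosine pattern changes (\(c_j\) near but not equal to \(0\)) should already be covered by the local statement: near a point with \(c_j^0=0\), the estimate holds on a fixed neighbourhood.

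\textbf{Step 2: dyadic decomposition and interpolation.} Split \(K_{E,\alpha}=\sum_{j\geq0}K_j\) with \(K_j=\psi_j K_{E,\alpha}\), where \(\psi_j\) is a smooth dyadic cutoff to \(|x|\sim2^j\). First, \(\|K_j*\|_{\ell^1\to\ell^\infty}\leq C\alpha2^{-jk}\). Second, \(\widehat{K_j}\) equals \(\mathbf 1_{[E-\alpha,E+\alpha]}(h_0)\) convolved with an \(L^1\)-normalized bump of width \(2^{-j}\). Since \(|\nabla h_0|\gtrsim1\) on the relevant shell, this gives
\[
\|K_j*\|_{\ell^2\to\ell^2}\leq C\min(1,\alpha2^j).
\]
Riesz--Thorin interpolation then gives
\[
\|K_j*\|_{\ell^{q'}\to\ell^q}\leq C\,(\alpha2^{-jk})^{1-2/q}\min(1,\alpha2^j)^{2/q}.
\]

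\textbf{Step 3: summation.} For \(2^j\leq\alpha^{-1}\), the pieces are \(C\alpha\,2^{j(2/q-k(1-2/q))}\). This sums to \(C\alpha\) exactly when \(q>2(k+1)/k\), which equals \(2(2d+5)/(2d-1)=q_d^\ast\) for \(d\geq5\). For \(2^j>\alpha^{-1}\), the tail sums to \(C\alpha^{(1+k)(1-2/q)}\leq C\alpha\) under the same condition. In \(d=4\) we have \(k=1\) and \(q_4^\ast=4\); the \(\log^{3/4}\) factor is absorbed in both sums by the strict inequality \(q>4\), since the geometric decay then dominates any power of \(j\). This yields \(\|P_{E,\alpha}\|_{\mathcal B(\ell^{q'},\ell^q)}\leq C_{I,q}\alpha\) for \(0<\alpha<\alpha_I\), where \(\alpha_I\) is chosen so that \([E-\alpha,E+\alpha]\subset I'\) for all \(E\in I\). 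Taking square roots gives the claimed bound \(C_{I,q}\alpha^{1/2}\).
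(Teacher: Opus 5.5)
Your proof is correct, but it is organized differently from the paper's.

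\textbf{The paper's route.} The paper does not run a Stein--Tomas argument on the projection itself. It feeds two inputs into the lattice Tomas--Stein theorem \cite[Proposition~3.4]{BlackDroginHernandez2025}. The first is the uniform decay exponent \(k\), obtained from Theorem~\ref{thm:local-decay} and the uniformity argument in Proposition~\ref{prop:global-decay}. The second is the uniform ball condition \(\sigma_{M_\lambda}(M_\lambda\cap B(\xi,r))\lesssim r^{d-1}\). This yields an \(L^2(M_\lambda)\to\ell^q\) extension estimate for each individual level surface, uniformly in \(\lambda\in I\). The projection bound then follows from the coarea formula and Cauchy--Schwarz in the energy variable over \([E-\alpha,E+\alpha]\).

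\textbf{Your route.} You apply \(TT^*\) to the projection and decompose the thickened-shell kernel dyadically in physical space. The same ball condition enters through your \(\ell^2\) bound \(\min(1,\alpha2^j)\), via the coarea estimate \(|B(\xi,r)\cap\{|h_0-E|\le\alpha\}|\lesssim\alpha r^{d-1}\) and the Schwartz tails of the bump.

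\textbf{Comparison.} The paper's route gives, as a by-product, the stronger fixed-level extension estimate uniformly in \(\lambda\), at the price of an external citation. Yours is self-contained and uses the Fourier decay only after averaging in energy. It also shows exactly where strictness of \(q>q_d^\ast\) is needed: the geometric sum over \(2^j\le\alpha^{-1}\) diverges logarithmically at \(q=q_d^\ast\), and the logarithmic factors must be absorbed.

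\textbf{Minor corrections.}
\begin{itemize}
\item For \(d=5\), the worst case \(m=d-1=4\) lies in the low-flat regime. The kernel bound is therefore \((1+|x|)^{-3/2}\log(2+|x|)\), not a pure power. The exponent is still \((2d-1)/6\), and the logarithm is absorbed exactly as you do for \(d=4\).
\item On uniformity in \(\lambda\), your claim that nearby cosine patterns are ``already covered by the local statement'' is vague. A cleaner justification: in the same cosine (or mixed) coordinates, nearby levels are translated hyperplanes \(\sum_j t_j=\mu\). In the Fourier representation of the delta mass, the translation contributes only a unimodular factor \(e^{-is\mu}\), so the H\"older arguments go through with unchanged constants.
\end{itemize}
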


\begin{proof}
If a regular point has \(m=d\), then all its cosine coordinates vanish
and its energy is \(2d\).  Hence \(m\leq d-1\) uniformly for level
surfaces with energy in \(I\).

For \(d\geq5\), Theorem~\ref{thm:local-decay}, together with the
uniformity argument in the proof of Proposition~\ref{prop:global-decay},
gives, up to an arbitrarily small loss when a logarithmic factor is
present, the uniform decay exponent
\[
k=\frac{2d-1}{6}.
\]
In dimension \(d=4\), the same argument gives every exponent \(k<1\).

Since \(I\) is compactly contained in the complement of the threshold
energies, \(|\nabla h_0|\) is uniformly bounded below on
\(\bigcup_{\lambda\in I}M_\lambda\).  The level surfaces therefore form a
uniformly smooth compact family, and in particular
\[
\sup_{\lambda\in I}
\sup_{\xi\in\mathbb T^d,\;r>0}
r^{1-d}
\sigma_{M_\lambda}\bigl(M_\lambda\cap B(\xi,r)\bigr)
<\infty.
\]
The lattice Tomas--Stein argument
\cite[Proposition~3.4]{BlackDroginHernandez2025} then gives
\[
\left\|
\int_{M_\lambda}
e^{2\pi i x\cdot\xi}
g(\xi)\,d\sigma_{M_\lambda}(\xi)
\right\|_{\ell^q_x}
\leq
C_{I,q}
\|g\|_{L^2(M_\lambda)}
\]
for every \(q>q_d^\ast\), uniformly for \(\lambda\in I\).
The asserted spectral projection estimate now follows from the coarea
formula and Cauchy--Schwarz in the energy variable.
\end{proof}

The sequence-space inclusions and Theorem~\ref{thm:resolvent} also give
\[
\sup_{z\in\mathbb C\setminus\mathbb R}
\|R_0(z)\|_{\mathcal B(\ell^1,\ell^4)}
<\infty,
\qquad d\geq5,
\]
while in dimension four,
\[
\|R_0(z)\|_{\mathcal B(\ell^1,\ell^4)}
\leq
C\left(
\log\left(
2+\frac1{\operatorname{dist}(z,[0,16])}
\right)
\right)^{1/2}.
\]

These estimates are relevant to the weak-disorder Anderson model.
With the normalization in \cite{BlackDroginHernandez2025},
\[
H_0=2d-\Delta_{\mathbb Z^d},
\]
and the exceptional set \(\mathcal E_d\) corresponds to the bulk
energies excluded there.  The spectral projection exponents used in
that work are
\[
p_4=6,
\qquad
p_d=\frac{2d}{d-3}
\quad(d>4),
\]
whereas Proposition~\ref{prop:thin-spectral-projections} gives every
\[
q>q_d^\ast.
\]
In particular,
\[
q_d^\ast<p_d
\]
in every \(d\geq4\).  Combined with the finite-volume comparison and
transfer argument in
\cite[Sections~3.1--3.2]{BlackDroginHernandez2025}, this gives a sharper
deterministic Tomas--Stein input for their random-resolvent estimates.
As observed in that work, an improvement of this exponent leads to
improved powers of the disorder parameter in dimensions \(d>3\).  We
do not pursue the resulting optimization of the probabilistic
exponents here.

\section{Resolvent reductions}\label{sec:resolvent}

\begin{lemma}[Weighted and unweighted estimates]\label{lem:weighted-unweighted}
Let \(A\) be a linear operator on finitely supported sequences, let
\(1\leq p\leq2\), and let \(2\leq r\leq\infty\) satisfy
\(\frac1p=\frac12+\frac1r\).
Then
\(\|A\|_{\mathcal B(\ell^p,\ell^{p'})}\leq C\)
if and only if
\(\|W_1AW_2\|_{\mathcal B(\ell^2)} \leq C\|W_1\|_{\ell^r}\|W_2\|_{\ell^r}\)
for all finitely supported \(W_1,W_2\).
\end{lemma}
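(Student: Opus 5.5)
The argument is a Hölder factorization in one direction and duality with a factorization of \(\ell^p\) functions in the other. The exponents fit because \(\frac1p=\frac12+\frac1r\), which equivalently reads \(\frac1{p'}=\frac12-\frac1r\).

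\emph{Unweighted implies weighted.} Let \(W_1,W_2\) be finitely supported and \(f\in\ell^2\). By Hölder, \(\|W_2f\|_{\ell^p}\le\|W_2\|_{\ell^r}\|f\|_{\ell^2}\), since \(\frac1p=\frac1r+\frac12\). Since \(W_2f\) is finitely supported, \(A(W_2f)\) is defined, and by assumption \(\|A W_2 f\|_{\ell^{p'}}\le C\|W_2\|_{\ell^r}\|f\|_{\ell^2}\). Applying Hölder once more, with \(\frac12=\frac1r+\frac1{p'}\), gives \(\|W_1AW_2f\|_{\ell^2}\le\|W_1\|_{\ell^r}\|AW_2f\|_{\ell^{p'}}\). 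Chaining the three bounds yields the weighted estimate. The \(\ell^2\) operator is first defined on finitely supported \(f\) and then extended by density. The case \(r=\infty\), \(p=2\), is included trivially.

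\emph{Weighted implies unweighted.} By density it suffices to bound \(|\langle Ag,h\rangle|\le C\|g\|_{\ell^p}\|h\|_{\ell^p}\) for finitely supported \(g,h\), since the dual of \(\ell^p\) is \(\ell^{p'}\) for \(1\le p\le2\); for \(p=1\) this still holds with \(p'=\infty\). Factor \(g=W_2f\) with
\[
W_2=|g|^{p/r},\qquad f=\operatorname{sgn}(g)\,|g|^{p/2},
\]
and factor \(h=\overline{W_1}\,k\) in the same way. Both factorizations use \(\frac pr+\frac p2=1\), so the factors are finitely supported. Their norms satisfy
\[
\|W_2\|_{\ell^r}=\|g\|_{\ell^p}^{p/r},\qquad \|f\|_{\ell^2}=\|g\|_{\ell^p}^{p/2},
\]
and the analogous identities hold for \(W_1\) and \(k\). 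Then \(\langle Ag,h\rangle=\langle W_1AW_2f,k\rangle\). The weighted bound gives
\[
|\langle Ag,h\rangle|\le C\,\|W_1\|_{\ell^r}\|W_2\|_{\ell^r}\|f\|_{\ell^2}\|k\|_{\ell^2}=C\|g\|_{\ell^p}\|h\|_{\ell^p},
\]
because the exponents on each norm add up to \(\frac pr+\frac p2=1\). The case \(r=\infty\) corresponds to \(W\equiv1\) on the support and is immediate.

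No step is a real obstacle. The only care needed is bookkeeping: all factors must stay finitely supported so that \(A\) is applied only where it is defined, the duality pairing must be well defined at \(p=1\), and the constant \(C\) must be the same in both directions, which the computation above confirms.
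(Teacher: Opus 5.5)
Your proposal is correct and follows the paper's proof essentially verbatim. The forward direction is H\"older twice, and the converse factors both entries of the pairing as \(g=|g|^{p/r}\cdot\operatorname{sgn}(g)|g|^{p/2}\), with the indicator-function interpretation at \(r=\infty\), and then applies duality (your density extension in the forward direction is superfluous, since \(W_2f\) is already finitely supported for every \(f\in\ell^2\)).
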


\begin{proof}
The forward implication follows twice from H\"older's inequality:
\[
\|W_1AW_2u\|_2
\leq
\|W_1\|_r\|AW_2u\|_{p'}
\leq
C\|W_1\|_r\|W_2\|_r\|u\|_2.
\]
Conversely, every finitely supported \(f\in\ell^p\) can be written as
\[
f=Wu,
\qquad
W=|f|^{p/r},
\qquad
u=\operatorname{sgn}(f)|f|^{p/2},
\]
with the evident interpretation when \(r=\infty\), and
\(\|W\|_r\|u\|_2=\|f\|_p\).
Factoring both entries in \(\langle Af,g\rangle\) in this way and using the
weighted estimate proves the reverse implication by duality.
\end{proof}

We use the following standard regular-level reduction; see
\cite[Proposition~A.5]{Cuenin2019} and
\cite[Proposition~2.3]{Taira2021}.  The formulation below includes the
parameter-uniform version obtained by the same proof.

\begin{proposition}[Fourier decay implies resolvent bounds]
\label{prop:fourier-to-resolvent}
Let \(T\in C^\infty(\mathbb T^d;\mathbb R)\), and let \(\chi\) be supported
where \(\nabla T\neq0\).  Suppose that there exists \(k>0\) such that,
uniformly for the level surfaces meeting \(\operatorname{supp}\chi\) and
for smooth amplitudes in a bounded subset of \(C^\infty\),
\[
\left|
\int_{\{T=\lambda\}}
e^{-2\pi i x\cdot\xi}a(\xi)\,d\sigma_\lambda(\xi)
\right|
\leq
C(1+|x|)^{-k},
\qquad x\in\mathbb Z^d.
\]
Then, for \(1\leq r\leq2+2k\),
\[
\sup_{z\in\mathbb C\setminus\mathbb R}
\|W_1\chi(D)(T(D)-z)^{-1}W_2\|_{\mathcal B(\ell^2)}
\leq
C\|W_1\|_{\ell^r}\|W_2\|_{\ell^r}.
\]
The same conclusion holds uniformly on any compact parameter family for
which the hypotheses are uniform.
\end{proposition}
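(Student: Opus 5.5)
The proof follows the Cuenin–Taira scheme. By Lemma~\ref{lem:weighted-unweighted} it is enough to bound \(\chi(D)(T(D)-z)^{-1}\) from \(\ell^p\) to \(\ell^{p'}\) with \(1/p=1/2+1/r\), uniformly in \(z=\lambda+i\varepsilon\), \(\varepsilon\neq0\). The endpoint \(r=2+2k\) corresponds to \(p=(2k+2)/(k+2)\), and the remaining range follows from the inclusion \(\ell^r\subset\ell^{r_0}\) for \(r\le r_0\). On \(\operatorname{supp}\chi\) we have \(\nabla T\neq0\), so after a partition of unity we may use local coordinates \(\xi=(\tau,\omega)\) with \(T(\xi)=\tau\). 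The coarea formula then gives
\[
\chi(D)(T(D)-z)^{-1}f
=
\int_{\mathbb R}\frac{1}{\tau-z}\,E_\tau f\,d\tau,
\qquad
E_\tau f=\bigl(\chi|\nabla T|^{-1}d\sigma_\tau\bigr)^{\vee}*f .
\]
The amplitudes \(a_\tau=\chi|\nabla T|^{-1}\) lie in a bounded subset of \(C^\infty\). The hypothesis therefore gives the uniform kernel bound \(|K_\tau(x)|\le C(1+|x|)^{-k}\), while \(\widehat{K_\tau}\) is a bounded density on the surface.

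The first step is to decompose the multiplier \((\tau-z)^{-1}\) in the normal variable. I would write the kernel of the localized resolvent as
\[
G_z(x)=\int \frac{\widehat K_\tau(x)}{\tau-z}\,d\tau
\]
and split dyadically in the distance to the singularity by setting \((\tau-z)^{-1}=\sum_{j\ge0}\psi_j(\tau-\lambda)(\tau-z)^{-1}\). Here \(\psi_0\) covers \(|\tau-\lambda|\gtrsim1\) and \(\psi_j\) lives at scale \(|\tau-\lambda|\sim2^{-j}\). The piece at distance \(\gtrsim1\) is a smooth bounded multiplier, so it maps \(\ell^p\to\ell^2\to\ell^{p'}\) trivially. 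Equivalently, in physical space, I would decompose \(G_z=\sum_\ell G_z\phi_\ell(x)\) dyadically in \(|x|\sim2^\ell\); this is the Cuenin/Taira route, and I would follow it. Each piece \(T_\ell\) is then interpolated between two estimates:
\[
\|T_\ell\|_{\ell^1\to\ell^\infty}\lesssim 2^{-\ell k},
\qquad
\|T_\ell\|_{\ell^2\to\ell^2}\lesssim 2^{\ell}.
\]
The \(\ell^1\to\ell^\infty\) bound comes from \(|G_z(x)|\lesssim(1+|x|)^{-k}\), uniformly in \(z\). To get it, one integrates \(\widehat K_\tau(x)\) against \((\tau-z)^{-1}\), and the oscillation \(e^{-2\pi i x\cdot\xi}\) in the normal direction makes the \(\tau\)-integral converge. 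Concretely, the \(\tau\)-dependence of \(\widehat K_\tau(x)\) is smooth with derivatives \(O(|x|)\), and the Hilbert-transform-type integral costs only a constant after this smoothing. The \(\ell^2\to\ell^2\) bound holds because the multiplier of \(T_\ell\) is \((\tau-z)^{-1}\) convolved with a bump of width \(2^{-\ell}\) in the normal direction, which is bounded by \(2^{\ell}\), uniformly in \(\varepsilon\).

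Interpolation gives \(\|T_\ell\|_{\ell^p\to\ell^{p'}}\lesssim 2^{\ell(-k(1-\theta)+\theta)}\) with \(\theta=2/p'\). This series is summable strictly below the endpoint and only borderline at \(r=2+2k\). To reach the endpoint without loss I would use Bourgain's summation trick: the estimates are of restricted weak type, and the bilinear interpolation of the real method (or Keel–Tao style summation) gives a strong-type bound at the critical exponent. Taira and Cuenin do exactly this, and I would quote their argument.

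The main obstacle is the uniform \(\ell^1\to\ell^\infty\) kernel estimate for the resolvent, rather than for a single surface measure, in the limit \(\varepsilon\to0\). This is where one needs the Fourier decay uniformly for all nearby levels and for amplitude families, which is exactly what the hypothesis provides. The parameter-uniform statement then follows because every constant above depends only on finitely many \(C^\infty\) seminorms and on the decay constant.
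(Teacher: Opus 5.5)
Your dyadic scheme is sound below the endpoint. Granted the kernel bound, the estimates \(\|T_\ell\|_{\ell^1\to\ell^\infty}\lesssim 2^{-\ell k}\) and \(\|T_\ell\|_{\ell^2\to\ell^2}\lesssim 2^{\ell}\) interpolate to a geometrically summable series for every \(r<2+2k\).

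\textbf{Main gap: the endpoint.} The gap is the endpoint \(r=2+2k\) itself, and the paper genuinely needs it: with \(k=(d-1)/3\) it is the closed range \(p\le 2(d+2)/(d+5)\).
\begin{itemize}
\item Bourgain's summation trick, applied to your two estimates, yields only restricted weak type \(\ell^{p,1}\to\ell^{p',\infty}\). Combined with H\"older's inequality in Lorentz spaces, this gives the weighted bound only for \(W_1,W_2\in\ell^{r,2}\). That class is strictly smaller than \(\ell^r\) because \(r>2\).
\item The Keel--Tao bilinear-interpolation upgrade to strong type needs bounds for \(\langle T_\ell f,g\rangle\) on a whole open set of exponent pairs \((1/a,1/b)\) around the endpoint, with a decay exponent that is affine there. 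Your two bounds lie on the diagonal \(a=b\) only, and nothing in the proposal supplies the off-diagonal ones. So \emph{quoting their argument} does not close the step.
\item It is also not what the cited proof does. As the proof of Proposition~\ref{prop:log-endpoint} makes explicit, the argument behind Proposition~\ref{prop:fourier-to-resolvent} (Cuenin, Proposition~A.5) is Stein's complex interpolation. It uses the analytic family \(s\mapsto\chi(D)(T(D)-z)^{-s}\), suitably normalized:
  \begin{itemize}
  \item on \(\operatorname{Re}s=0\) the multiplier is bounded, which gives the \(\ell^2\) estimate;
  \item on \(\operatorname{Re}s=1+k\) the kernel is bounded uniformly in \(x\) and \(z\);
  \item \(s=1\) corresponds to interpolation parameter \(1/(1+k)\).
  \end{itemize}
  This produces the strong-type \(\ell^p\to\ell^{p'}\) bound at \(1/p=1/2+1/(2+2k)\) directly, with no summation over scales.
\end{itemize}

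\textbf{Secondary point: the kernel bound.} The uniform bound \(|G_z(x)|\lesssim(1+|x|)^{-k}\) does not follow from \emph{derivatives \(O(|x|)\)}. Write \(F(\tau,x)\) for the localized level-set transform.
\begin{itemize}
\item The bounds \(|F|\lesssim|x|^{-k}\) and \(|\partial_\tau F|\lesssim|x|^{1-k}\) only control the Hilbert-type integral by \(|x|^{-k}\log|x|\). The logarithm comes from the range \(|x|^{-1}\le|\tau-\lambda|\le1\), and such a loss would spoil even the restricted weak-type endpoint.
\item What removes it is that \(\int F(\tau,x)e^{-it\tau}\,d\tau\), the kernel of \(e^{-itT(D)}\chi(D)\) at \(x\), is rapidly decreasing unless \(|t|\simeq|x|\). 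This follows by nonstationary phase, because \(|\nabla T|\) is bounded above and below on \(\operatorname{supp}\chi\).
\item Represent \((\tau-z)^{-1}\) as a time integral and cut off to \(|t|\simeq|x|\). This replaces it by an \(L^1\)-normalized kernel at scale \(|x|^{-1}\) in \(\tau\), uniformly in \(\operatorname{Im}z\). The hypothesis then gives \(O(|x|^{-k})\).
\end{itemize}
The same localization gives the \(O(1)\) kernel bound on \(\operatorname{Re}s=1+k\) in the complex-interpolation proof.
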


On a regular patch the coarea density \(1/|\nabla T|\) is smooth and may
be absorbed into the amplitude.  If the Fourier bound contains a factor
\((\log(2+|x|))^b\), Proposition~\ref{prop:fourier-to-resolvent} applies
with every exponent smaller than \(k\).

\begin{proposition}[The logarithmic endpoint]\label{prop:log-endpoint}
Assume the hypotheses of Proposition~\ref{prop:fourier-to-resolvent} and
suppose, in dimension four, that, for some \(A\geq1\),
\[
\left|
\int_{\{T=\lambda\}}
e^{-2\pi i x\cdot\xi}a(\xi)\,d\sigma_\lambda(\xi)
\right|
\leq
A(1+|x|)^{-1}\log(2+|x|),
\qquad x\in\mathbb Z^4.
\]
Then
\[
\|\chi(D)(T(D)-z)^{-1}\|_{\mathcal B(\ell^{4/3},\ell^4)}
\leq
CA^{1/2}
\left(
\log\left(
2+\frac1{\operatorname{dist}(z,T(\mathbb T^4))}
\right)
\right)^{1/2}.
\]
The equivalent weighted estimate holds with \(W_1,W_2\in\ell^4\).
\end{proposition}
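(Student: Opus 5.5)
We follow the proof of Proposition~\ref{prop:fourier-to-resolvent} (Cuenin/Taira), keeping track of the logarithm. Put $\varepsilon=\operatorname{dist}(z,T(\mathbb T^4))$ and $\mu=\operatorname{Re} z$. Write
\[
\chi(D)(T(D)-z)^{-1}=\int_{\mathbb R}\frac{1}{\lambda-z}\,\chi(D)\,dE_\lambda ,
\]
so that the kernel is
\[
K_z(x)=\int\frac{F_\lambda(x)}{\lambda-z}\,d\lambda,
\qquad
F_\lambda(x)=\int_{\{T=\lambda\}}e^{2\pi i x\cdot\xi}\frac{\chi(\xi)}{|\nabla T|}\,d\sigma_\lambda .
\]
Take a smooth dyadic partition of unity in $\lambda-\mu$. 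The piece at scale $|\lambda-\mu|\sim 2^{j}$, for $2^j\gtrsim\varepsilon$, is an operator $S_j$. The region $|\lambda-\mu|\lesssim\varepsilon$ is handled as one block, in which $(\lambda-z)^{-1}$ is smooth at scale $\varepsilon$ with size $\varepsilon^{-1}$. This gives roughly $\log_2(2+1/\varepsilon)$ blocks. The goal is a bound for each block $S_j$ that is uniform in $j$ (carrying the factor $A^{1/2}$), followed by an orthogonality argument that sums the blocks with only a square-root loss.

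\textbf{Estimate for one block.} For each block we prove the $\ell^{4/3}\to\ell^4$ bound by a $TT^*$/interpolation argument, as in the Stein--Tomas proof inside Proposition~\ref{prop:fourier-to-resolvent}.

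1. Let $\psi_j$ be the cutoff in $\lambda$, and split $\psi_j(\lambda)/(\lambda-z)$ in the Fourier variable $t$ dual to $\lambda$. Then $S_j$ has kernel
\[
\int \widehat{m_j}(t)\,\bigl(e^{-itT(D)}\chi(D)\bigr)(x)\,dt,
\qquad m_j(\lambda)=\frac{\psi_j(\lambda)}{\lambda-z}.
\]
2. Here $|\widehat{m_j}(t)|\lesssim (1+2^j|t|)^{-N}$ for each $N$; the factor $2^{-j}$ from the size of $m_j$ is cancelled by the length $2^j$ of its support.
3. From the surface decay we get a dispersive bound. By the coarea formula, $e^{-itT(D)}\chi(D)$ is the Fourier transform in $\lambda$ of $F_\lambda$, so the hypothesis gives
\[
\|e^{-itT(D)}\chi(D)\|_{\ell^1\to\ell^\infty}\lesssim A\,|t|^{-1}\log(2+|t|)
\]
at least morally; this is handled as in Taira by splitting $|x|\lessgtr|t|$.
4. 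Interpolate this with the trivial $\ell^2$ bound. At the $\ell^{4/3}\to\ell^4$ level, the dispersive decay exponent $1$ is exactly the critical one, so integrating in $t$ against $(1+2^j|t|)^{-N}$ gives a bound for $S_j$ that is uniform in $j$, with constant proportional to $A^{1/2}$.

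The $A^{1/2}$ comes out naturally because in the weighted formulation we bound $\|W_1 S_j W_2\|\le\|W_1 S_j^{1/2}\|\,\|S_j^{1/2}W_2\|$, and each square-root factor is controlled by $\sqrt{A}$.

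\textbf{Summing the blocks.} Adding the $\approx\log(1/\varepsilon)$ uniform bounds directly would cost a full logarithm. To get only $\log^{1/2}$, we use that the $S_j$ have essentially disjoint spectral supports and work in the weighted form. Write $S_j=P_jQ_jP_j$, where $P_j$ is a smoothed spectral projection to the $j$-th annulus. Then
\[
|\langle W_1\textstyle\sum_j S_jW_2u,v\rangle|\le\sum_j\|Q_j\|\,\|P_jW_2u\|\,\|P_j\overline{W_1}v\|,
\]
and Cauchy--Schwarz in $j$ reduces matters to a square-function bound of the form
\[
\sum_j\|P_jW u\|_{\ell^2}^2\lesssim \log(2+1/\varepsilon)\,\|W\|_{\ell^4}^2\|u\|_2^2 .
\]
Combined with the near-orthogonality $\sum_j P_j^2\lesssim 1$, this should yield the square-root loss. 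An alternative, if this route is clumsy, is complex interpolation on the analytic family $(T(D)-z)^{-s}$: at $\operatorname{Re}s=1$ the bound is $\log$-lossy, and interpolating against a lossless estimate at a smaller exponent may give the factor $\log^{1/2}$.

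The main obstacle is getting only $\log^{1/2}$ rather than $\log$. Each single-scale piece already sits exactly at the critical exponent, and the orthogonality across scales must be exploited in the weighted $\ell^2$ setting rather than in $\ell^{4/3}\to\ell^4$, where it is not available. Finally, the unweighted statement follows from the weighted one by Lemma~\ref{lem:weighted-unweighted}.
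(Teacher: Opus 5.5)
There is a genuine gap in both halves of your argument.

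\emph{The single-block bound.} The uniform bound for one block is not available from the hypothesis. If you carry out step 4 as described, you get $\|S_j\|_{\ell^1\to\ell^\infty}\lesssim A\int(1+2^j|t|)^{-N}\langle t\rangle^{-1}\log(2+|t|)\,dt\lesssim A\log^2(2+2^{-j})$ and $\|S_j\|_{\ell^2\to\ell^2}\lesssim 2^{-j}$. This gives only $\|S_j\|_{\ell^{4/3}\to\ell^4}\lesssim A^{1/2}2^{-j/2}\log(2+2^{-j})$, which reaches about $A^{1/2}\varepsilon^{-1/2}$ at the smallest scale. The criticality of the exponent $1$ is exactly why the $t$-integral does not close, not a reason it is uniform.

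A bound uniform in $j$ for these normalized thin-shell operators amounts to a uniform endpoint extension estimate $L^2(\sigma_\lambda)\to\ell^4$. Stein's endpoint argument for that needs pure power decay. From a decay bound carrying a logarithm, what you can actually extract is of order $A^{1/2}\log^{1/2}(2+2^{-j})$ per block, which is the very loss you are trying to prove.

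\emph{The summation.} Even granting uniform blocks, your summation yields a full logarithm. If both $\sum_j\|P_jW_2u\|_2^2$ and $\sum_j\|P_j\overline{W_1}v\|_2^2$ are bounded by $\log(2+1/\varepsilon)$ times the natural norms, then Cauchy--Schwarz in $j$ gives $\log(2+1/\varepsilon)$, not its square root. The normalizations are also incompatible. If $\|Q_j\|=O(1)$, then $P_j$ must carry a factor $2^{-j/2}$, and $\sum_jP_j^2$ behaves like $|T(D)-\mu|^{-1}$ on $\{|T-\mu|\gtrsim\varepsilon\}$, which is unbounded. If instead the $P_j$ are genuine smoothed projections with $\sum_jP_j^2\lesssim1$, then $\|Q_j\|\approx 2^{-j}$, which is as large as $\varepsilon^{-1}$.

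\emph{The missing idea.} No decomposition in the spectral variable is needed. You trade the logarithm for an $\eta$-loss and then remove it by interpolating against the $\ell^2$ bound.

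For $0<\eta\le 1/2$ one has $\log(2+s)\le C\eta^{-1}(1+s)^\eta$. So the hypothesis gives pure decay of order $1-\eta$ with constant $CA\eta^{-1}$. Tracking constants in the complex-interpolation proof of Proposition~\ref{prop:fourier-to-resolvent} then gives a bound $C(A/\eta)^{1/(2-\eta)}$ from $\ell^{p_\eta}$ to $\ell^{p_\eta'}$, uniformly in $z$, where $p_\eta=2(2-\eta)/(3-\eta)<4/3$.

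Now apply Riesz--Thorin with $\|\chi(D)(T(D)-z)^{-1}\|_{\mathcal B(\ell^2)}\le\operatorname{dist}(z,T(\mathbb T^4))^{-1}$, putting weight $\eta/2$ at the $\ell^2$ end. This lands exactly on $\ell^{4/3}\to\ell^4$ and gives $C(A/\eta)^{1/2}\operatorname{dist}(z,T(\mathbb T^4))^{-\eta/2}$. The power of $A$ is $(1-\eta/2)/(2-\eta)=1/2$, which is where $A^{1/2}$ comes from.

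Finally choose $\eta\simeq 1/\log(2+1/\operatorname{dist}(z,T(\mathbb T^4)))$. This makes the distance factor $O(1)$ and turns $\eta^{-1/2}$ into the $\log^{1/2}$.

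Your closing alternative points toward interpolation, but it omits precisely these two ingredients. The first is the explicit $\eta^{-1}$ constant. The second is the $\ell^2$ endpoint, which is the only place the distance from $z$ to the spectrum enters.
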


\begin{proof}
Let \(0<\eta\leq1/2\).  Since
\(\log(2+s)\leq C\eta^{-1}(1+s)^\eta\),
the assumed estimate implies pure power decay of order \(1-\eta\), with
constant \(CA\eta^{-1}\).  Tracking the constant in the complex
interpolation proof of Proposition~\ref{prop:fourier-to-resolvent} gives
\[
\|\chi(D)(T(D)-z)^{-1}\|_
{\mathcal B(\ell^{p_\eta},\ell^{p_\eta'})}
\leq
C A^{1/(2-\eta)}\eta^{-1/(2-\eta)},
\qquad
p_\eta=\frac{2(2-\eta)}{3-\eta}.
\]
Indeed, the kernel bound on the upper boundary of the analytic family is
linear in \(A\eta^{-1}\), and the interpolation parameter at the
resolvent is \(1/(2-\eta)\); see the proof of
\cite[Proposition~A.5]{Cuenin2019}.

The spectral theorem also gives
\[
\|\chi(D)(T(D)-z)^{-1}\|_{\mathcal B(\ell^2)}
\leq
\operatorname{dist}(z,T(\mathbb T^4))^{-1}.
\]
Interpolate the last two estimates with weight \(\eta/2\) at the
\(\ell^2\) endpoint.  The identity
\(\frac34 = \left(1-\frac\eta2\right) \frac{3-\eta}{2(2-\eta)} +\frac\eta2\cdot\frac12\)
gives
\[
\|\chi(D)(T(D)-z)^{-1}\|_{\mathcal B(\ell^{4/3},\ell^4)}
\leq
CA^{1/2}\eta^{-1/2}
\operatorname{dist}(z,T(\mathbb T^4))^{-\eta/2}.
\]
Taking
\(\eta = \min\left\{ \frac12,\, \frac1{\log(2+\operatorname{dist}(z,T(\mathbb T^4))^{-1})} \right\}\)
proves the assertion.  Lemma~\ref{lem:weighted-unweighted} gives the
weighted formulation.
\end{proof}

\begin{proposition}[Nondegenerate critical points]
\label{prop:critical-resolvent}
Let \(T\in C^\infty(\mathbb T^d;\mathbb R)\), \(d\geq3\), and suppose that
\(\xi^0\) is a nondegenerate critical point of \(T\).  If \(\chi\) is
supported sufficiently close to \(\xi^0\), then
\[
\sup_{z\in\mathbb C\setminus\mathbb R}
\|W_1\chi(D)(T(D)-z)^{-1}W_2\|_{\mathcal B(\ell^2)}
\leq
C\|W_1\|_{\ell^r}\|W_2\|_{\ell^r}
\]
for \(1\leq r\leq d\).
\end{proposition}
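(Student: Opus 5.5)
This is Taira's critical-point estimate, and the plan follows that scheme: stationary phase gives an $|x|^{-d/2}$ bound on the localized propagator, and endpoint Strichartz theory converts it into the weighted resolvent bound. The argument is local and uses only that $\xi^0$ is nondegenerate. We may take $\chi$ supported in a small ball on which the Morse lemma applies: $T(\xi)=T(\xi^0)+Q(\phi(\xi))$ with $Q$ a nondegenerate quadratic form and $\phi$ a local diffeomorphism.

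\emph{Dispersive estimate.} Write the resolvent through the propagator. For $\operatorname{Im}z>0$,
\[
\chi(D)(T(D)-z)^{-1}=i\int_0^\infty e^{itz}\chi(D)e^{-itT(D)}\,dt,
\]
and $\operatorname{Im}z<0$ is handled symmetrically. The kernel of $\chi(D)e^{-itT(D)}$ is
\[
K_t(x)=\int e^{2\pi i x\cdot\xi-itT(\xi)}\chi(\xi)\,d\xi .
\]
Since $\nabla^2T$ is nondegenerate on $\operatorname{supp}\chi$, stationary phase in $\xi$ with the large parameter $t$ gives $|K_t(x)|\le C(1+|t|)^{-d/2}$ uniformly in $x\in\mathbb Z^d$. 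Where the phase has no stationary point, the bound follows by integrating by parts. When the stationary point lies in the support, the uniform version of stationary phase (Hessian bounded below, amplitude in a bounded $C^\infty$ set) gives the same bound. Together with $L^2$ boundedness, the family $U(t)=\chi(D)e^{-itT(D)}$ therefore satisfies the Keel--Tao hypotheses with $\sigma=d/2$.

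\emph{Strichartz and the resolvent.} Since $d\ge3$, we have $\sigma>1$, so Keel--Tao (endpoint included) gives the inhomogeneous estimate
\[
\Bigl\|\int_{s<t}U(t-s)F(s)\,ds\Bigr\|_{L^2_t\ell^{2^*}}\le C\|F\|_{L^2_t\ell^{(2^*)'}},\qquad 2^*=\frac{2d}{d-2}.
\]
Following Taira, insert $F(s)=e^{-isz}\mathbf 1_{s>0}f$ and multiply by $e^{itz}$ (damping $e^{-t\operatorname{Im}z}$). Equivalently, write the resolvent as a time integral and apply the truncated $TT^*$ bound. This yields $\|\chi(D)(T(D)-z)^{-1}\|_{\ell^{p}\to\ell^{p'}}\le C$ uniformly in $z$, with $1/p=1/2+1/d$. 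The cleanest route is likely Taira's: the Fourier-transform-in-time argument, in which the Strichartz estimate for $\mathbf 1_{t>0}e^{itz}U(t)$ is combined with Plancherel in $t$ to pass from $L^2_t$ bounds to the resolvent. By Lemma~\ref{lem:weighted-unweighted}, this is the weighted bound with $r=d$.

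\emph{Smaller $r$.} For $1\le r<d$, the inclusion $\ell^r\subset\ell^d$ gives $\|W\|_{\ell^d}\le\|W\|_{\ell^r}$, so the full range $1\le r\le d$ follows.

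The main obstacle is making the second step rigorous uniformly in $z$. Here the paper can simply cite \cite{Taira2021}, whose proof uses only nondegeneracy and the $t^{-d/2}$ dispersive bound.
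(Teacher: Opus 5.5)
Your outline matches the paper's route: stationary phase for the localized propagator, the Keel--Tao endpoint with $\sigma=d/2>1$, conversion to the resolvent, Lemma~\ref{lem:weighted-unweighted}, and $\ell^r\subset\ell^d$. However, the one step the paper proves rather than cites is left as a menu of options, and none of them works as written. That step is passing from the retarded Strichartz estimate to a bound uniform in $z$.

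\begin{itemize}
\item For $\operatorname{Im}z>0$ the forcing $F(s)=e^{-isz}\mathbf 1_{s>0}f$ satisfies $|F(s)|=e^{s\operatorname{Im}z}|f|$. This exponential grows; it does not damp. So $F\notin L^2_t\ell^{2d/(d+2)}$, and the estimate cannot be applied on $(0,\infty)$.
\item ``Multiplying by $e^{itz}$'' is not an operation that preserves an inequality between $L^2_t\ell^{2d/(d-2)}$ and $L^2_t\ell^{2d/(d+2)}$ norms.
\item The Fourier-in-time route needs more than Plancherel, because the time Fourier transform is not bounded on $L^2_t\ell^{q}$ for $q\neq2$.
\item A smaller point: the retarded operator in Keel--Tao is $\int_{s<t}U(t)U(s)^*F(s)\,ds$. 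With $U(t)=\chi(D)e^{-itT(D)}$ this produces $|\chi|^2(D)$, not $\chi(D)$. This is why the paper introduces $\widetilde\chi$ with $\widetilde\chi=1$ on $\operatorname{supp}\chi$, so that $\widetilde\chi(D)^2u=u$.
\end{itemize}

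The missing idea is to work on a finite interval and normalize. The paper applies the Duhamel form of the endpoint estimate on $I=[0,S]$, with initial time $t_0=0$, to
\[
u(t)=e^{-itz}(T(D)-z)^{-1}\chi(D)f .
\]
Then $(i\partial_t-T(D))u=-e^{-itz}\chi(D)f$. So both $u$ and the forcing are $e^{-itz}$ times a fixed vector, and the estimate becomes
\[
\|e^{-itz}\|_{L^2(0,S)}\,\|(T(D)-z)^{-1}\chi(D)f\|_{\ell^{2d/(d-2)}}
\leq C\|u(0)\|_{\ell^2}+C\|e^{-itz}\|_{L^2(0,S)}\,\|\chi(D)f\|_{\ell^{2d/(d+2)}} .
\]
The initial-data term is fixed and finite, since $\|u(0)\|_{\ell^2}\leq|\operatorname{Im}z|^{-1}\|f\|_{\ell^2}$. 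Meanwhile $\|e^{-itz}\|_{L^2(0,S)}\to\infty$ as $S\to\infty$. Dividing by this norm and letting $S\to\infty$ therefore removes the initial-data term and leaves a constant independent of $z$. The lower half-plane follows by taking adjoints.

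If you apply the estimate on $[0,S]$ and use this normalization instead of ``multiplying by $e^{itz}$'', your first suggestion becomes exactly this argument. The extra homogeneous term it produces is controlled by $\|(T(D)-z)^{-1}f\|_{\ell^2}$ and disappears in the same way.
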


\begin{proof}
Choose a real-valued \(\widetilde\chi\in C_c^\infty(\mathbb T^d)\) such that
\(\widetilde\chi=1\) on \(\operatorname{supp}\chi\) and
\(\widetilde\chi\) is supported in the same sufficiently small
neighborhood.  Stationary phase gives
\(\|\widetilde\chi(D)^2e^{-itT(D)}\|_{\mathcal B(\ell^1,\ell^\infty)}
\leq C\langle t\rangle^{-d/2}\).
As pointed out by Taira \cite{Taira2021}, the endpoint \(TT^*\) argument of Keel and Tao \cite[Theorem 1.2]{KeelTao1998} then
gives, on every finite interval \(I\),
\[
\|\widetilde\chi(D)^2u\|_{L_t^2(I;\ell^{2d/(d-2)})}
\leq
C\|u(t_0)\|_{\ell^2}
+
C\|(i\partial_t-T(D))u\|_{L_t^2(I;\ell^{2d/(d+2)})},
\]
where \(t_0\in I\) is an endpoint of \(I\), and the constant is
independent of \(I\).  For \(\operatorname{Im}z>0\), take \(I=[0,S]\),
choose \(t_0=0\), and apply this estimate to
\[u(t)=e^{-itz}(T(D)-z)^{-1}\chi(D)f.\]  Since
\[(i\partial_t-T(D))u=-e^{-itz}\chi(D)f\] and
\(\widetilde\chi(D)^2u=u\), division by
\(\|e^{-itz}\|_{L^2(0,S)}\) and passage to the limit \(S\to\infty\)
yield
\[\|\chi(D)(T(D)-z)^{-1}f\|_{\ell^{2d/(d-2)}}
\leq C\|\chi(D)f\|_{\ell^{2d/(d+2)}}
\leq C\|f\|_{\ell^{2d/(d+2)}}.\]
The lower half-plane follows by taking adjoints and applying the same
argument with \(\overline\chi\).  Thus
\[\sup_{z\in\mathbb C\setminus\mathbb R}
\|\chi(D)(T(D)-z)^{-1}\|_{
\mathcal B(\ell^{2d/(d+2)},\ell^{2d/(d-2)})}\leq C.\]
Lemma~\ref{lem:weighted-unweighted} gives the assertion for \(r=d\).
The remaining values \(1\leq r\leq d\) follow from
\(\ell^r\subset\ell^d\).
\end{proof}

\section{Basic reductions}\label{sec:basic}

For \(n\geq2\), set
\[
\Pi_n=
\left\{
y=(y_1,\dots,y_n)\in\mathbb R^n:
\sum_{j=1}^n y_j=0
\right\}.
\]
Let \(\Pi_n^*\) denote the dual space of \(\Pi_n\).
We write \(dy_\Pi=d\sigma_{\Pi_n}(y)\) for the Euclidean surface
measure on \(\Pi_n\).  By the coarea formula,
\[
\int_{\Pi_n}F(y)\,dy_\Pi
=
\sqrt{n}
\int_{\mathbb R^n}
F(y)\,
\delta\left(\sum_{j=1}^n y_j\right)\,dy.
\]
Equivalently, for compactly supported smooth \(F\),
\[
\int_{\Pi_n}F(y)\,dy_\Pi
=
\frac{\sqrt{n}}{2\pi}
\int_{\mathbb R}
\int_{\mathbb R^n}
F(y)e^{is\sum_{j=1}^n y_j}\,dy\,ds,
\]
where the identity is understood by first regularizing the delta mass and
then passing to the limit.

\begin{lemma}[Reduction to product cutoffs on \(\Pi_n\)]\label{lem:reduction}
Let \(n\geq2\), and let
\(\mathcal B:[1,\infty)\to(0,\infty)\)
be a positive function. Let \(\Phi_R\) be a real-valued smooth phase on \(\Pi_n\),
depending on \(R\geq1\). Suppose that, for every compact interval
\(K\subset\mathbb R\), there exist \(M\) and \(C_K\) such that for every product
amplitude
\(A(y)=\prod_{j=1}^n\psi_j(y_j), \psi_j\in C_c^\infty(K)\),
one has
\[
\sup_{\ell\in \Pi_n^*}
\left|
\int_{\Pi_n}
e^{i\Phi_R(y)+i\ell(y)}
\prod_{j=1}^n\psi_j(y_j)\,dy_\Pi
\right|
\leq
C_K\mathcal B(R)
\prod_{j=1}^n
\max_{0\leq r\leq M}\|\psi_j^{(r)}\|_{L^\infty}.
\]
Then for every \(a\in C_c^\infty(\Pi_n)\),
\[
\sup_{\ell\in \Pi_n^*}
\left|
\int_{\Pi_n}
e^{i\Phi_R(y)+i\ell(y)}a(y)\,dy_\Pi
\right|
\leq
C_a\mathcal B(R),
\]
where \(C_a\) depends on finitely many seminorms of \(a\), but not on \(\ell\) or
\(R\).
\end{lemma}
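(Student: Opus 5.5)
The plan is to expand the general amplitude \(a\) in a Fourier series in product form and apply the hypothesis term by term. The linear phase \(\ell\) is left free in the hypothesis precisely so that it can absorb the Fourier characters, since any linear functional on \(\mathbb R^n\) restricts to an element of \(\Pi_n^*\).

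\emph{Step 1: extend \(a\) to \(\mathbb R^n\).} Since \(\operatorname{supp}a\subset\Pi_n\) is compact, choose a large \(L\). Let \(\tilde a\) be a smooth compactly supported extension of \(a\) to \(\mathbb R^n\) with \(\operatorname{supp}\tilde a\subset(-L,L)^n\). For instance, take \(\tilde a(y)=a(Py)\rho(\sum_j y_j)\), where \(P\) is the orthogonal projection onto \(\Pi_n\) and \(\rho\in C_c^\infty\) equals \(1\) near \(0\). Then \(\tilde a=a\) on \(\Pi_n\), and the seminorms of \(\tilde a\) are controlled by those of \(a\).

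\emph{Step 2: expand in a Fourier series.} Fix \(\psi\in C_c^\infty(K)\) with \(K=[-2L,2L]\) and \(\psi=1\) on \([-L,L]\). On the cube \([-2L,2L]^n\), expand
\[
\tilde a(y)=\sum_{k\in\mathbb Z^n}c_k e^{i\pi k\cdot y/(2L)},
\]
so that on \(\Pi_n\)
\[
a(y)=\tilde a(y)\prod_{j=1}^n\psi(y_j)
=\sum_{k}c_k\prod_{j=1}^n\psi(y_j)\,e^{i\pi k\cdot y/(2L)} .
\]
Integrating by parts \(N\) times gives \(|c_k|\leq C_N\langle k\rangle^{-N}\|\tilde a\|_{C^N}\). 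The series converges absolutely and uniformly, so the integral may be exchanged with the sum.

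\emph{Step 3: absorb the characters into \(\ell\) and sum.} Each term is
\[
c_k\int_{\Pi_n}e^{i\Phi_R(y)+i\ell(y)+i\ell_k(y)}\prod_{j}\psi(y_j)\,dy_\Pi,
\qquad \ell_k(y)=\frac{\pi k\cdot y}{2L}.
\]
Here \(\ell_k\) restricted to \(\Pi_n\) lies in \(\Pi_n^*\), and so does \(\ell+\ell_k\). The hypothesis, applied with the fixed product amplitude \(\prod_j\psi(y_j)\) and functional \(\ell+\ell_k\), bounds each term by \(|c_k|\,C_K\mathcal B(R)\max_{r\leq M}\|\psi^{(r)}\|_\infty^n\). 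The cutoffs \(\psi_j=\psi\) do not depend on \(k\), so these factors are uniform in \(k\). Summing over \(k\) with \(N>n\) gives the bound \(C_a\mathcal B(R)\), with \(C_a\) depending only on \(\|\tilde a\|_{C^{N}}\). This constant is independent of \(\ell\) and \(R\).

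The main point to handle carefully is the reduction in Step 3. Writing \(e^{ik\cdot y}\) as a product of one-variable factors \(e^{ik_jy_j}\) would place them in the \(\psi_j\), whose \(C^M\) norms grow like \(|k|^M\). That growth must then be beaten by the decay of \(c_k\), which needs \(N>M+n\). Absorbing the characters into the free linear term \(\ell\) avoids this growth altogether. The only remaining justification is the exchange of sum and integral, and it follows from uniform convergence on the compact support of \(\prod_j\psi(y_j)\) restricted to \(\Pi_n\).
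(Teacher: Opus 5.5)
Your proposal is correct and takes essentially the same route as the paper. You extend \(a\) by \(a(Py)\rho(\sum_j y_j)\), Fourier-expand on a cube against fixed product cutoffs equal to one on the support, and absorb each character into the free linear term \(\ell\), which is what the paper means by saying the exponential factor only changes the linear perturbation on \(\Pi_n\). The paper also records the \((1+|\nu_j|)^M\) bound for the modulated cutoffs, but as you observe, absorbing the characters into \(\ell\) makes that growth irrelevant, so only \(\sum_k|c_k|<\infty\) is needed.
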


\begin{proof}
Let
\(P_\Pi y=y-\frac1n\left(\sum_{j=1}^ny_j\right)(1,\dots,1)\)
be the orthogonal projection onto \(\Pi_n\). Choose \(\zeta\in C_c^\infty(\mathbb R)\)
with \(\zeta(0)=1\), and extend \(a\) to
\(A(y)=a(P_\Pi y)\zeta\left(\sum_{j=1}^ny_j\right)\in C_c^\infty(\mathbb R^n)\).
Then \(A|_{\Pi_n}=a\). Choose a cube \(Q=\prod_{j=1}^n I_j\) of side length \(L\), with
\(\operatorname{supp}A\Subset Q\), and periodize \(A\) with period \(L\)
in each variable. Since \(A\) vanishes
near \(\partial Q\), the periodic extension is smooth, and on \(Q\)
\[
A(y)=\sum_{\nu\in\mathbb Z^n}c_\nu e^{2\pi i\nu\cdot y/L},
\qquad
|c_\nu|\leq C_N(1+|\nu|)^{-N}
\]
for every \(N\). Choose \(\chi_j\in C_c^\infty(I_j)\) such that
\(\prod_{j=1}^n\chi_j(y_j)=1\)
on \(\operatorname{supp}A\). Then
\[
A(y)=
\sum_{\nu\in\mathbb Z^n}c_\nu
\prod_{j=1}^n\chi_j(y_j)e^{2\pi i\nu_jy_j/L}
\]
on the support of the product cutoff. The exponential factor only changes the linear
perturbation on \(\Pi_n\), and
\[
\max_{0\leq r\leq M}
\left\|
\partial_t^r\left(\chi_j(t)e^{2\pi i\nu_jt/L}\right)
\right\|_{L^\infty}
\leq C_{M,Q}(1+|\nu_j|)^M.
\]
Choosing \(N\) large enough and summing over \(\nu\) proves the claim.
\end{proof}

\begin{lemma}[Uniform nonstationary phase]
\label{lem:nonstationary}
Let \(E\) be a fixed finite-dimensional real Euclidean space, let
\(a\in C_c^\infty(E)\), let \(R>0\), and let
\(\Phi\in C^\infty(E;\mathbb R)\). Assume that, on a fixed open
neighborhood of \(\operatorname{supp}a\),
\(|\nabla_E\Phi(y)|\geq cR\),
and, for every multi-index \(\alpha\) with \(|\alpha|\geq1\),
\(|\partial^\alpha\Phi(y)|\leq C_\alpha R\).
Here the derivatives are taken in any fixed orthonormal linear coordinates
on \(E\).

Then, for every integer \(N\geq1\),
\[
\left|
\int_E e^{i\Phi(y)}a(y)\,dy
\right|
\leq
C_N R^{-N}
\sum_{|\alpha|\leq N}
\|\partial^\alpha a\|_{L^1(E)}.
\]
The constant \(C_N\) depends only on \(N\), \(c\), \(E\), and finitely many
of the constants \(C_\alpha\), but not on \(R\) or on any additional
parameters on which \(\Phi\) may depend.
\end{lemma}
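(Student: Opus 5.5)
We prove Lemma~\ref{lem:nonstationary} by the standard integration-by-parts argument with the first-order operator adapted to \(\nabla_E\Phi\), tracking constants so that only \(c\), \(N\), \(E\) and finitely many \(C_\alpha\) enter. Let \(U\) be the fixed open neighborhood of \(\operatorname{supp}a\) on which the hypotheses hold. On \(U\) define
\[
L=\frac{1}{i}\,\frac{\nabla_E\Phi\cdot\nabla_E}{|\nabla_E\Phi|^2},
\qquad
Le^{i\Phi}=e^{i\Phi}.
\]
Its formal transpose is
\[
L^{t}u=-\frac1i\,\nabla_E\cdot\Bigl(\frac{\nabla_E\Phi}{|\nabla_E\Phi|^2}\,u\Bigr).
\]
Since \(a\) is compactly supported in \(U\), there are no boundary terms, and \(N\) integrations by parts give
\[
\int_E e^{i\Phi}a\,dy=\int_E e^{i\Phi}(L^t)^N a\,dy,
\]
so the integral is bounded by \(\|(L^t)^Na\|_{L^1(E)}\).

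The key step is a symbol-type estimate for the coefficient vector \(w=\nabla_E\Phi/|\nabla_E\Phi|^2\). We claim that on \(U\), for every multi-index \(\beta\),
\[
|\partial^\beta w|\leq C_\beta' R^{-1},
\]
with \(C_\beta'\) depending only on \(c\) and the \(C_\alpha\) with \(|\alpha|\leq|\beta|+1\). To see this, write \(w=g\,\nabla_E\Phi\) with \(g=|\nabla_E\Phi|^{-2}\). Each derivative of \(\nabla_E\Phi\) is \(O(R)\). By the Fa\`a di Bruno formula applied to \(t\mapsto t^{-1}\) composed with \(|\nabla_E\Phi|^2\), \(\partial^\gamma g\) is a sum of terms of the form
\[
|\nabla_E\Phi|^{-2-2k}\prod_{i=1}^{k}\partial^{\gamma_i}|\nabla_E\Phi|^2 .
\]
Each factor \(\partial^{\gamma_i}|\nabla_E\Phi|^2\) is \(O(R^2)\), and \(|\nabla_E\Phi|^{-2-2k}\leq (cR)^{-2-2k}\), so \(\partial^\gamma g=O(R^{-2})\). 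By Leibniz, \(\partial^\beta w=O(R^{-2}\cdot R)=O(R^{-1})\). Note that only upper bounds on derivatives of \(\Phi\) of order at least \(1\) are used, so the argument is insensitive to any additive constant in \(\Phi\), and hence to any other parameters on which \(\Phi\) depends.

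Now iterate. By induction on \(N\),
\[
(L^t)^N a=\sum_{|\alpha|\leq N} b_{\alpha}\,\partial^\alpha a,
\]
where each \(b_\alpha\) is a polynomial in the components of \(w\) and their derivatives of order at most \(N\). Each monomial contains exactly \(N\) factors of \(w\) or its derivatives, since each application of \(L^t\) contributes one factor. Hence \(\sup_U|b_\alpha|\leq C_N R^{-N}\), and integrating gives
\[
\Bigl|\int_E e^{i\Phi}a\,dy\Bigr|\leq C_NR^{-N}\sum_{|\alpha|\leq N}\|\partial^\alpha a\|_{L^1(E)}.
\]

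The only delicate point is the bookkeeping that each application of \(L^t\) gains exactly one factor \(R^{-1}\), regardless of how derivatives distribute. The estimate \(\partial^\beta w=O(R^{-1})\) secures this, and no other difficulty is expected.
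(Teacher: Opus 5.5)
Your proposal is correct and takes essentially the same route as the paper: \(N\) integrations by parts with the transpose of \(L=\frac1i|\nabla_E\Phi|^{-2}\nabla_E\Phi\cdot\nabla_E\), controlled by symbol-type bounds on the coefficient field. The differences are cosmetic. The paper normalizes \(\varphi=R^{-1}\Phi\), so that \(V=\nabla_E\varphi/|\nabla_E\varphi|^2=Rw\) has \(O(1)\) derivatives and \(R^{-N}\) appears as an explicit prefactor, and it extends \(V\) to all of \(E\) with a cutoff; you instead keep \(w\) with \(O(R^{-1})\) bounds, count \(N\) factors per monomial, and work directly on \(U\).
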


\begin{proof}
This proof is a modified version of Stein \cite[Chapter VIII, Proposition 3]{Stein1993}.
Set
\(\varphi=R^{-1}\Phi, V=\frac{\nabla_E\varphi}{|\nabla_E\varphi|^2}\).
After multiplying \(V\) by a fixed cutoff supported in the neighborhood
appearing in the hypotheses and equal to one near
\(\operatorname{supp}a\), we may regard \(V\) as a smooth vector field on
\(E\). The assumptions and the chain rule imply that, for every
multi-index \(\gamma\),
\(\sup_{\operatorname{supp}a}|\partial^\gamma V| \leq C_\gamma\).

Define
\(L=\frac1{iR}V\cdot\nabla_E\).
Then
\(Le^{i\Phi}=e^{i\Phi}\)
near \(\operatorname{supp}a\). With respect to the bilinear integration
pairing, the formal transpose of \(L\) is
\(L^{\mathrm t}f = -\frac1{iR}\operatorname{div}_E(Vf)\).
Thus, writing
\(\mathcal Df=\operatorname{div}_E(Vf)\),
and integrating by parts \(N\) times,
\(\int_E e^{i\Phi}a\,dy = \left(-\frac1{iR}\right)^N \int_E e^{i\Phi}\mathcal D^Na\,dy\).
The uniform bounds for the derivatives of \(V\) give
\(\|\mathcal D^Na\|_{L^1(E)} \leq C_N \sum_{|\alpha|\leq N} \|\partial^\alpha a\|_{L^1(E)}\).
The result follows.
\end{proof}

\section{One-dimensional estimates}\label{sec:1d}

\begin{lemma}[{One-dimensional van der Corput estimates, \cite[Chapter VIII, Proposition 3]{Stein1993}}]\label{lem:vandercorput}
Let \(I\subset\mathbb R\) be an interval, let \(b\in C_c^1(I)\), and let
\(\phi\in C^3(I;\mathbb R)\).

If \(k=2\) or \(k=3\), \(\phi^{(k)}\) has a fixed sign on \(I\), and
\(|\phi^{(k)}(t)|\geq\lambda>0\)
on \(I\), then
\[
\left|\int_I e^{i\phi(t)}b(t)\,dt\right|
\leq
C_k\lambda^{-1/k}
\left(\|b\|_{L^\infty}+\|b'\|_{L^1}\right).
\]

If \(\phi'\) has a fixed sign on \(I\), \(|\phi'(t)|\geq\lambda>0\) on \(I\), and
\(\phi'\) is monotone on \(I\), then
\[
\left|\int_I e^{i\phi(t)}b(t)\,dt\right|
\leq
C\lambda^{-1}
\left(\|b\|_{L^\infty}+\|b'\|_{L^1}\right).
\]
\end{lemma}



\begin{lemma}[One-dimensional cubic estimate]\label{lem:cubic-1d}
Let \(q\in C^\infty((-r_0,r_0))\) be real-valued and satisfy
\[
q(0)=q'(0)=q''(0)=0,
\qquad
|q'''(t)|\geq c_0>0
\]
on \((-r_0,r_0)\). Let \(\psi\in C_c^\infty((-r_0,r_0))\). For \(T\geq1\), define
\[B_T(u)= \int_{\mathbb R}e^{i(T^3q(x/T)+ux)}\psi(x/T)\,dx.\]
Then
\[
|B_T(u)|\leq C(1+|u|)^{-1/4}.
\]
Moreover, for every \(N\geq1\),
\(|B_T(u)|\leq C_N(1+|u|)^{-N}\)
whenever \(|u|>CT^2\). Consequently,
\(\|B_T\|_{L^p(\mathbb R)}\leq C_p \quad (p>4)\),
and
\(\|B_T\|_{L^4(\mathbb R)}\leq C\bigl(\log(e+T)\bigr)^{1/4}\).

All constants are independent of \(T\) and \(u\). They may depend on
\(c_0\), on \(q\) and \(\psi\) through finitely many seminorms, and on
\(N\) or \(p\) when indicated.
\end{lemma}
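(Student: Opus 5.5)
Substitute \(x=Ts\) to get
\[
B_T(u)=T\int e^{i\,T^3\phi_v(s)}\psi(s)\,ds,
\qquad
\phi_v(s)=q(s)+vs,\qquad v=u/T^2 .
\]
The argument then splits according to whether \(\phi_v\) has critical points on \(\operatorname{supp}\psi\), using the hypotheses on \(q\): \(q'(0)=q''(0)=0\) and \(|q'''|\ge c_0\). Shrinking \(r_0\) is not allowed, since \(\psi\) is given. Instead I use that \(q'''\) has a fixed sign on the interval, so \(q''\) is monotone with a single zero at \(0\). Hence \(q'\) is convex or concave with a minimum or maximum \(0\) at the origin, and \(|q'(s)|\ge c_0 s^2/2\).

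\textbf{Large \(|u|\).} For \(|v|\) larger than \(2\sup_{\operatorname{supp}\psi}|q'|\), we have \(|\phi_v'|\ge |v|/2\). All derivatives of \(T^3\phi_v\) of order \(\ge2\) are bounded by \(CT^3\le CT^3|v|\). I apply Lemma~\ref{lem:nonstationary} in one dimension with \(R=T^3|v|=T|u|\). This gives \(|B_T(u)|\le C_N T(T|u|)^{-N}\le C_N|u|^{-N}\). That is the claimed rapid decay for \(|u|>CT^2\).

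\textbf{Intermediate \(u\).} For \(|u|\le CT^2\) and \(|u|\ge1\), I work in the original variable \(x\). The phase is \(\Phi(x)=T^3q(x/T)+ux\), with \(\Phi'''=q'''(x/T)\), so \(|\Phi'''|\ge c_0\) of fixed sign. Also \(\Phi'(x)=T^2q'(x/T)+u\). Points where \(|\Phi'(x)|\ge |u|/2\) are handled by the monotone-\(\Phi'\) part of Lemma~\ref{lem:vandercorput}. \(\Phi'\) is monotone on each side of the unique zero of \(\Phi''\), which is at \(x=0\). This gives \(O(|u|^{-1})\) on each of \(O(1)\) intervals.

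\textbf{Near stationary points.} The remaining set is where \(|T^2q'(x/T)+u|<|u|/2\). There \(T^2|q'(x/T)|\sim|u|\), hence \(|x|\gtrsim |u|^{1/2}\) by \(|q'(s)|\lesssim s^2\). On this set \(|\Phi''(x)|=T|q''(x/T)|\ge c_0|x|\gtrsim |u|^{1/2}\) by the mean value theorem. The \(k=2\) van der Corput bound then gives \(O(|u|^{-1/4})\).

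The amplitude has to be cut into pieces. I will partition with smooth cutoffs adapted to the dyadic scale \(|x|\sim|u|^{1/2}\), or use the standard remark that van der Corput with \(b\) of bounded variation holds with indicator truncations. The cutoffs have \(\|b\|_\infty+\|b'\|_{L^1}=O(1)\) uniformly in \(T\), because \(\psi(x/T)\) has total variation \(O(1)\). This bookkeeping is the main technical point to get right.

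\textbf{Small \(u\).} For \(|u|\le1\), the \(k=3\) van der Corput bound gives \(|B_T|\le C\) directly. Altogether \(|B_T(u)|\le C(1+|u|)^{-1/4}\).

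\textbf{\(L^p\) bounds.} Integrate the pointwise bound. For \(p>4\), \(\int(1+|u|)^{-p/4}\,du<\infty\) independently of \(T\). For \(p=4\), split at \(|u|=CT^2\). The region \(|u|\le CT^2\) contributes \(\int_{|u|\le CT^2}(1+|u|)^{-1}\,du\lesssim\log(e+T)\). The region \(|u|>CT^2\) contributes \(O(1)\) by the rapid decay. Taking fourth roots yields \(\|B_T\|_{L^4}\le C(\log(e+T))^{1/4}\).
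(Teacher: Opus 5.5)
Your proposal is correct and follows essentially the same route as the paper: a uniform bound from the third-order van der Corput estimate, a splitting at the scale \(|x|\sim|u|^{1/2}\) with the second-derivative test near the stationary points and the monotone first-derivative test elsewhere, Lemma~\ref{lem:nonstationary} for \(|u|>CT^2\), and integration of the pointwise bound for the \(L^p\) norms. The differences are only cosmetic and both versions work: you rescale to \(s=x/T\) before the nonstationary-phase step (scale \(T|u|\), where the paper uses scale \(|u|\) and absorbs an \(O(T)\) amplitude loss), and you cut according to the size of \(|\Phi'|\) instead of the paper's smooth cutoffs \(\eta_0,\eta_1,\eta_\infty\) in \(|x|/|u|^{1/2}\).
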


\begin{proof}
Let
\(\phi_{T,u}(x)=T^3q(x/T)+ux\).
Then
\[
\phi_{T,u}'(x)=T^2q'(x/T)+u,
\qquad
\phi_{T,u}''(x)=Tq''(x/T),
\qquad
\phi_{T,u}'''(x)=q'''(x/T).
\]
Moreover,
\(\|\psi(x/T)\|_{L^\infty} + \|\partial_x(\psi(x/T))\|_{L^1} \leq C\).
Since
\(|\phi_{T,u}'''(x)|\geq c_0\)
on the support of \(\psi(x/T)\), the third-order estimate in
Lemma~\ref{lem:vandercorput} gives
\[
|B_T(u)|
\leq
C
\left(
\|\psi(x/T)\|_{L^\infty}
+
\|\partial_x(\psi(x/T))\|_{L^1}
\right)
\leq C.
\]

Since \(q'''\) is continuous and nonvanishing on the connected interval
\((-r_0,r_0)\), it has a fixed sign.  The conditions
\(q'(0)=q''(0)=0\) therefore imply
\[
c|t|^2\leq |q'(t)|\leq C|t|^2,
\qquad
c|t|\leq |q''(t)|\leq C|t|
\]
on the convex hull of \(\{0\}\cup\operatorname{supp}\psi\). It remains to prove the bound
\(|B_T(u)|\leq C|u|^{-1/4}\)
for \(|u|\geq2\), since the case \(|u|<2\) follows from the uniform bound already proved.

Set
\(R=|u|^{1/2}\).
Choose constants \(0<c_1<C_1\), depending only on the constants in the
preceding comparison estimates, so that zeros of
\(\phi_{T,u}'(x)=T^2q'(x/T)+u\)
can occur only in the region
\(3c_1R\leq |x|\leq \frac{C_1}{3}R\).
Such constants exist because
\(c|x|^2\leq T^2|q'(x/T)|\leq C|x|^2\).
Choose smooth functions \(\eta_0,\eta_1,\eta_\infty\) on \([0,\infty)\) such that
\(\eta_0(r)+\eta_1(r)+\eta_\infty(r)=1\),
\[
\operatorname{supp}\eta_0\subset[0,2c_1],
\qquad
\operatorname{supp}\eta_1\subset[c_1,C_1],
\qquad
\operatorname{supp}\eta_\infty\subset[C_1/2,\infty).
\]
Decompose
\(B_T(u)=B_0(u)+B_1(u)+B_\infty(u)\),
where
\[
B_\nu(u)
=
\int_{\mathbb R}
e^{i\phi_{T,u}(x)}
\eta_\nu(|x|/R)\psi(x/T)\,dx,
\qquad
\nu\in\{0,1,\infty\}.
\]

On the support of \(B_1(u)\), one has
\(|x|\simeq R\).
Splitting the integral into the two half-lines \(x>0\) and \(x<0\), the second derivative
has fixed sign on each piece and satisfies
\(|\phi_{T,u}''(x)| = T|q''(x/T)| \simeq |x| \simeq R\).
The corresponding amplitudes satisfy
\[
\|\eta_1(|x|/R)\psi(x/T)\|_{L^\infty}
+
\|\partial_x(\eta_1(|x|/R)\psi(x/T))\|_{L^1}
\leq C.
\]
Therefore the second-order estimate in
Lemma~\ref{lem:vandercorput} gives
\[
|B_1(u)|\leq CR^{-1/2}=C|u|^{-1/4}.
\]

On the supports of \(B_0(u)\) and \(B_\infty(u)\), the choice of
\(c_1\) and \(C_1\) gives
\[
|\phi_{T,u}'(x)|\geq c|u|.
\]
After splitting into the two half-lines \(x>0\) and \(x<0\),
\(\phi_{T,u}'\) is monotone on each piece.  The first-derivative estimate
in Lemma~\ref{lem:vandercorput} gives
\[
|B_0(u)|+|B_\infty(u)|
\leq C|u|^{-1}
\leq C|u|^{-1/4}.
\]
Combining the three pieces gives
\(|B_T(u)| \leq C|u|^{-1/4}\).

If
\[
|u|>CT^2
\]
with \(C\) sufficiently large, then
\[
|\phi_{T,u}'(x)|\geq c|u|
\]
on the whole support of \(\psi(x/T)\), and every positive-order
derivative of \(\phi_{T,u}\) is \(O(|u|)\) there.  Apply
Lemma~\ref{lem:nonstationary} with scale \(|u|\) and with \(N+1\) in
place of \(N\).  Since
\[
\sum_{r=0}^{N+1}
\left\|
\partial_x^r\bigl(\psi(x/T)\bigr)
\right\|_{L^1}
\leq C_NT
\leq C_N|u|^{1/2},
\]
we obtain, for every \(N\geq1\),
\[
|B_T(u)|\leq C_N(1+|u|)^{-N}.
\]

Finally,
\[
\|B_T\|_{L^p}^p
\leq
C\int_{|u|\leq CT^2}(1+|u|)^{-p/4}\,du
+
C_N\int_{|u|>CT^2}(1+|u|)^{-N}\,du.
\]
The right-hand side is bounded uniformly in \(T\) for \(p>4\). For \(p=4\), it is bounded
by
\(C\log(e+T)\).
Taking the \(p\)-th root proves the asserted \(L^p\) estimates.
\end{proof}

\begin{lemma}[One-dimensional quadratic estimates]
\label{lem:quadratic-1d}
Let \(g\in C^\infty((-r_0,r_0))\) be real-valued and satisfy
\(|g''(t)|\geq c_0>0\)
on \((-r_0,r_0)\). Let \(\psi\in C_c^\infty((-r_0,r_0))\). For \(|\tau|\geq1\), define
\(Q_\tau(v)= \int_{\mathbb R}e^{i(\tau g(t)+vt)}\psi(t)\,dt\).
Then, for every \(2\leq q\leq\infty\),
\(\|Q_\tau\|_{L^q(\mathbb R)} \leq C_q|\tau|^{-1/2+1/q}\).
In particular,
\(\|Q_\tau\|_{L^\infty(\mathbb R)} \leq C|\tau|^{-1/2}\).
\end{lemma}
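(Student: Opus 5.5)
The plan is to interpolate between an \(L^\infty\) bound and an \(L^2\) bound, since the claimed exponent \(-1/2+1/q\) is exactly the interpolation between \(-1/2\) at \(q=\infty\) and \(0\) at \(q=2\). By the elementary inequality \(\|Q_\tau\|_{L^q}\leq\|Q_\tau\|_{L^\infty}^{1-2/q}\|Q_\tau\|_{L^2}^{2/q}\), it suffices to prove
\[
\|Q_\tau\|_{L^\infty(\mathbb R)}\leq C|\tau|^{-1/2},
\qquad
\|Q_\tau\|_{L^2(\mathbb R)}\leq C,
\]
with constants independent of \(\tau\). Combining these gives \(|\tau|^{-(1-2/q)/2}=|\tau|^{-1/2+1/q}\).

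\textbf{The \(L^\infty\) bound.} Fix \(v\) and set \(\phi(t)=\tau g(t)+vt\). Then \(\phi''=\tau g''\). Because \(g''\) is continuous and nonvanishing on the connected interval \((-r_0,r_0)\), it has a fixed sign there, and \(|\phi''|\geq c_0|\tau|\). The second-order case of Lemma~\ref{lem:vandercorput}, applied with the fixed amplitude \(\psi\), then gives \(|Q_\tau(v)|\leq C(c_0|\tau|)^{-1/2}(\|\psi\|_{L^\infty}+\|\psi'\|_{L^1})\), uniformly in \(v\).

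\textbf{The \(L^2\) bound.} Write \(Q_\tau(v)=\int e^{ivt}F_\tau(t)\,dt\) with \(F_\tau(t)=e^{i\tau g(t)}\psi(t)\). This is the Fourier transform of \(F_\tau\), up to normalization and reflection, so Plancherel gives
\[
\|Q_\tau\|_{L^2}=\sqrt{2\pi}\,\|F_\tau\|_{L^2}=\sqrt{2\pi}\,\|\psi\|_{L^2},
\]
which is independent of \(\tau\). The phase \(e^{i\tau g}\) is unimodular, so it does not affect the \(L^2\) norm.

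There is no real obstacle here. The only point needing care is the sign condition on \(g''\) required by van der Corput; it follows from continuity on a connected interval, exactly as in the cubic lemma. Note that the \(L^q\) bound is only finite for \(q\geq2\), and that is why the statement restricts to \(2\leq q\leq\infty\). For \(q<2\) one would need \(v\)-decay, which nonstationary phase supplies only for \(|v|\gtrsim|\tau|\). That regime is not needed here.
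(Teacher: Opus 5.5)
Your proposal is correct and follows the paper's proof: the second-order van der Corput estimate gives the uniform \(|\tau|^{-1/2}\) bound, Plancherel gives the \(\tau\)-independent \(L^2\) bound, and interpolating the two yields the \(L^q\) estimate. Your remark that \(g''\) has a fixed sign is a detail the paper leaves implicit. Your closing aside, however, is inaccurate, though harmless. For fixed \(\tau\), nonstationary phase makes \(Q_\tau\) decay rapidly once \(|v|\gtrsim|\tau|\), so \(\|Q_\tau\|_{L^q}\) is finite for every \(q\geq1\), with the same bound \(C_q|\tau|^{-1/2+1/q}\). The restriction to \(q\geq2\) reflects what is needed later, not a failure of finiteness.
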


\begin{proof}
Let
\(\phi_{\tau,v}(t)=\tau g(t)+vt\).
Then
\(|\phi_{\tau,v}''(t)| = |\tau|\,|g''(t)| \geq c|\tau|\)
on the support of \(\psi\). Also
\(\|\psi\|_{L^\infty}+\|\psi'\|_{L^1}\leq C\).
The second-order estimate in Lemma~\ref{lem:vandercorput} therefore gives
\[
|Q_\tau(v)|
\leq
C|\tau|^{-1/2}
\left(
\|\psi\|_{L^\infty}+\|\psi'\|_{L^1}
\right)
\leq
C|\tau|^{-1/2}.
\]
Taking the supremum in \(v\) proves the claim.

On the other hand, by Plancherel,
\[
\|Q_\tau\|_{L^2(\mathbb R)}
\leq
C\left\|e^{i\tau g(t)}\psi(t)\right\|_{L^2(\mathbb R)}
=
C\|\psi\|_{L^2(\mathbb R)}
\leq C.
\]
Interpolating between the \(L^2\) and \(L^\infty\) estimates gives
\[
\|Q_\tau\|_{L^q(\mathbb R)}
\leq
C_q|\tau|^{-1/2+1/q},
\qquad
2\leq q\leq\infty.
\]
\end{proof}

\section{Separable hyperplane estimates}\label{sec:separable}

\begin{proposition}[Separable hyperplane estimate]
\label{prop:high-flat}
Let \(d\geq m\geq5\).  Let \(q_j\) and \(p_k\) be real-valued smooth
functions on a fixed interval about the origin.  For \(1\leq j\leq m\),
assume
\[
q_j(0)=q_j'(0)=q_j''(0)=0,
\qquad
|q_j'''(t)|\geq c_0>0.
\]
For \(m+1\leq k\leq d\), assume
\(|p_k''(t)|\geq c_0>0\).
Let \(a\in C_c^\infty(\Pi_d)\) be supported sufficiently close to the origin. Let
\(R\geq1\), and suppose
\(cR\leq |\tau_j|\leq CR, j=1,\dots,d\).
Define
\begin{equation}
\mathcal I(\tau,\beta)
=
\int_{\Pi_d}
\exp\left(i
\left[
\sum_{j=1}^m\tau_jq_j(t_j)
+
\sum_{k=m+1}^d\tau_kp_k(t_k)
+
\sum_{j=1}^d\beta_jt_j
\right]\right)
a(t)\,dt_\Pi.
\tag{$\ast$}\label{eq:separable-integral}
\end{equation}
Then
\(|\mathcal I(\tau,\beta)| \leq CR^{-(d-m)/2-(m-1)/3}\).
The constant is independent of \(\beta\), \(\tau\), and \(R\).
\end{proposition}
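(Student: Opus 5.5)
First, by Lemma~\ref{lem:reduction} (applied with $n=d$ and $\Phi_R$ the phase in \eqref{eq:separable-integral} without the linear part; a linear functional on $\Pi_d$ is the restriction of some $\beta\cdot t$), it suffices to treat product amplitudes $a(t)=\prod_{j=1}^d\psi_j(t_j)$ with $\psi_j$ supported in a small fixed interval. We must then show the bound with a constant controlled by finitely many $C^M$ norms of the $\psi_j$, uniformly in $\beta$. Next we use the Fourier representation of surface measure on $\Pi_d$ recorded before Lemma~\ref{lem:reduction}:
\[
\mathcal I(\tau,\beta)=\frac{\sqrt d}{2\pi}\int_{\mathbb R}\prod_{j=1}^m F_j(s)\prod_{k=m+1}^d G_k(s)\,ds,
\]
where
\[
F_j(s)=\int e^{i(\tau_jq_j(t)+(\beta_j+s)t)}\psi_j(t)\,dt,\qquad
G_k(s)=\int e^{i(\tau_kp_k(t)+(\beta_k+s)t)}\psi_k(t)\,dt.
\]
The interchange is justified by regularizing the delta and noting absolute convergence, which follows from the bounds below.

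Then we bound each factor. For a cubic factor, substitute $t=x/T$ with $T=|\tau_j|^{1/3}\simeq R^{1/3}$. This gives $F_j(s)=T^{-1}B_T(T^{-1}(\beta_j+s))$, with $B_T$ as in Lemma~\ref{lem:cubic-1d} (with $q$ replaced by $\pm q_j$). Hence
\[
\|F_j\|_{L^\infty}\le CT^{-1}=CR^{-1/3},
\qquad
\|F_j\|_{L^p}=T^{-1+1/p}\|B_T\|_{L^p}\le C_pR^{-1/3+1/(3p)}\quad (p>4).
\]
The shift by $\beta_j$ is harmless since norms are translation invariant. For a quadratic factor, Lemma~\ref{lem:quadratic-1d} gives $\|G_k\|_{L^\infty}\le CR^{-1/2}$. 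We put every quadratic factor in $L^\infty$ and estimate
\[
|\mathcal I|\le CR^{-(d-m)/2}\int\prod_{j=1}^m|F_j|\,ds.
\]
Since $m\ge5$, Hölder with all exponents equal to $m>4$ gives
\[
\int\prod_{j=1}^m|F_j|\le\prod_{j=1}^m\|F_j\|_{L^m}\le CR^{m(-1/3+1/(3m))}=CR^{-(m-1)/3}.
\]
This is exactly the claimed bound $CR^{-(d-m)/2-(m-1)/3}$.

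The main point to check is uniformity. The constants in Lemma~\ref{lem:cubic-1d} depend on $\psi_j$ only through finitely many seminorms and are independent of $T$. The rescaling must keep $\psi_j$ and $q_j$ inside the fixed interval $(-r_0,r_0)$ where $|q_j'''|\ge c_0$; this holds because $a$ is supported near the origin. The sign of $\tau_j$ is absorbed by conjugation or by replacing $q_j$ with $-q_j$. The resulting product bound depends on $\prod_j\max_{r\le M}\|\psi_j^{(r)}\|_\infty$, which is the form required by Lemma~\ref{lem:reduction}; this dependence is where care is needed. The remaining step is to verify that the Fourier-representation identity holds rigorously despite the oscillatory $s$-integral. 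One way is to insert a factor $e^{-\epsilon s^2}$ and let $\epsilon\to0$ by dominated convergence, using that $\prod_j F_j\in L^1$.
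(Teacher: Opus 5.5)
Your proposal is correct and follows the paper's proof essentially line by line. The steps are the same: reduction to product amplitudes via Lemma~\ref{lem:reduction}, the Fourier representation of the hyperplane constraint, and the rescaling $t=x/T_j$ with $T_j=|\tau_j|^{1/3}$, so that Lemma~\ref{lem:cubic-1d} applied to $\operatorname{sgn}(\tau_j)q_j$ gives $\|F_j\|_{L^m}\le C R^{-1/3+1/(3m)}$ for $m>4$. The quadratic factors are then put in $L^\infty$ via Lemma~\ref{lem:quadratic-1d}, and H\"older is applied with all cubic exponents equal to $m$. Your extra remarks, on the multiplicative seminorm dependence needed by Lemma~\ref{lem:reduction} and on justifying the $s$-integral by Gaussian regularization, are sound and make explicit points the paper leaves implicit.
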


\begin{proof}
By Lemma~\ref{lem:reduction} with \(n=d\), it suffices to consider
\(a(t)=\prod_{j=1}^d\psi_j(t_j)\).
Using the Fourier representation of the hyperplane constraint,
\[
\mathcal I(\tau,\beta)
=
\frac{\sqrt{d}}{2\pi}
\int_{\mathbb R}
\prod_{j=1}^m C_j(s+\beta_j)
\prod_{k=m+1}^d Q_k(s+\beta_k)\,ds,
\]
where
\(C_j(r)=\int_{\mathbb R}e^{i(\tau_jq_j(t)+rt)}\psi_j(t)\,dt\),
and
\(Q_k(r)=\int_{\mathbb R}e^{i(\tau_kp_k(t)+rt)}\psi_k(t)\,dt\).
Lemma~\ref{lem:quadratic-1d} gives
\[
\|Q_k\|_{L^\infty}\leq CR^{-1/2}.
\]
For \(1\leq j\leq m\), set \(T_j=|\tau_j|^{1/3}\). With \(x=T_jt\),
\(C_j(r)=T_j^{-1}B_j(r/T_j)\),
where
\(B_j(u)=\int_{\mathbb R}e^{i(\tau_jq_j(x/T_j)+ux)}\psi_j(x/T_j)\,dx\).
Lemma~\ref{lem:cubic-1d}, applied to
\(\operatorname{sgn}(\tau_j)q_j\), gives, for every \(p>4\),
\[
\|C_j\|_{L^p}\leq C_pR^{-1/3+1/(3p)}.
\]
Using H\"older's inequality with all cubic exponents equal to \(m\),
\[
|\mathcal I(\tau,\beta)|
\leq
CR^{-(d-m)/2}
\prod_{j=1}^m\|C_j\|_{L^m}
\leq
CR^{-(d-m)/2-(m-1)/3}.
\]
\end{proof}

\begin{proposition}[Low-flat separable hyperplane estimate]
\label{prop:low-flat}
Let \(d\geq4\) and \(0\leq m\leq4\).  Let \(q_j\) and \(p_k\) be
real-valued smooth functions on a fixed interval about the origin and
assume
\[
q_j(0)=q_j'(0)=q_j''(0)=0,
\qquad
|q_j'''(t)|\geq c_0>0,
\qquad 1\leq j\leq m,
\]
and
\(|p_k''(t)|\geq c_0>0, m+1\leq k\leq d\).
Let \(a\in C_c^\infty(\Pi_d)\) be supported sufficiently close to the
origin, let \(R\geq1\), and assume
\(cR\leq|\tau_j|\leq CR, j=1,\dots,d\).
Define \(\mathcal I(\tau,\beta)\) by
\eqref{eq:separable-integral}. Then
\(|\mathcal I(\tau,\beta)| \leq C R^{-(d-2)/2} \bigl(\log(2+R)\bigr)^{m/4}\).
The constant is independent of \(\beta\), \(\tau\), and \(R\).
\end{proposition}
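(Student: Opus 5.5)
We follow the proof of Proposition~\ref{prop:high-flat}.  By Lemma~\ref{lem:reduction} with \(n=d\), we reduce to product amplitudes \(a(t)=\prod_{j=1}^d\psi_j(t_j)\).  The Fourier representation of the hyperplane constraint then gives
\[
\mathcal I(\tau,\beta)=\frac{\sqrt d}{2\pi}\int_{\mathbb R}\prod_{j=1}^m C_j(s+\beta_j)\prod_{k=m+1}^d Q_k(s+\beta_k)\,ds,
\]
with \(C_j\), \(Q_k\) exactly as in that proof.  The only change is the way Hölder's inequality is distributed.  Since \(m\leq4\) is below the critical integrability exponent of the cubic factors, we place each cubic factor in \(L^4\) and let quadratic factors absorb the remaining integrability.

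For the cubic factors, Lemma~\ref{lem:cubic-1d} with \(T_j=|\tau_j|^{1/3}\) and the rescaling \(C_j(r)=T_j^{-1}B_j(r/T_j)\) give
\[
\|C_j\|_{L^4}=T_j^{-1+1/4}\|B_j\|_{L^4}\leq CR^{-1/4}(\log(2+R))^{1/4}.
\]
For the quadratic factors, Lemma~\ref{lem:quadratic-1d} gives \(\|Q_k\|_{L^q}\leq CR^{-1/2+1/q}\).

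We then choose exponents with \(m/4+\sum_k 1/q_k=1\), so \(\sum_k 1/q_k=1-m/4\).  There are \(d-m\) quadratic factors, and \(d-m\geq 1\) whenever \(m\leq 3\) or \(d\geq5\).  We give the budget \(1-m/4\) to the quadratic factors, spread so that each \(q_k\geq2\).  This is possible because \(1-m/4\leq (d-m)/2\) for \(d\geq4\) when \(d-m\geq1\): for example, take \(q=\infty\) for all but at most two of them.  The remaining factors are bounded in \(L^\infty\).

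Collecting the powers of \(R\):
\[
R^{-m/4}\cdot R^{-(d-m)/2+(1-m/4)}=R^{-(d-2)/2},
\]
since \(-m/4-(d-m)/2+1-m/4=-(d-2)/2\).  The logarithms contribute \((\log(2+R))^{m/4}\), which is the claimed bound.

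The degenerate case \(d=m=4\) has no quadratic factor.  There Hölder with four \(L^4\) norms gives \(R^{-1}(\log)^{1}\), and \(R^{-1}=R^{-(d-2)/2}\) with log power \(m/4=1\), so it is consistent with the statement.  The case \(m=0\) uses only quadratic factors, for instance two in \(L^2\) and the rest in \(L^\infty\).

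The main obstacle is to check that the Hölder budget can always be met with admissible exponents \(q_k\in[2,\infty]\), together with the uniformity in \(\beta\).  Uniformity in \(\beta\) is automatic, because translations preserve the \(L^p\) norms and the constants in Lemmas~\ref{lem:cubic-1d} and \ref{lem:quadratic-1d} are uniform.  One also has to confirm that applying Lemma~\ref{lem:cubic-1d} to \(\operatorname{sgn}(\tau_j)q_j\) handles negative \(\tau_j\).  Finally, \(\log(e+T_j)\lesssim\log(2+R)\) because \(T_j\simeq R^{1/3}\).
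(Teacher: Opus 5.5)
Your proposal is correct and follows the paper's proof. Both reduce to product amplitudes via Lemma~\ref{lem:reduction}, use the Fourier representation of the hyperplane constraint, put each cubic factor in \(L^4\) with the bound \(CR^{-1/4}(\log(2+R))^{1/4}\) from Lemma~\ref{lem:cubic-1d}, and let the quadratic factors absorb the remaining H\"older budget \(1-m/4\). The only difference is cosmetic: for \(m\leq3\) the paper puts every quadratic factor in the common space \(L^{4(d-m)/(4-m)}\), and for \(m=4\) it puts them all in \(L^\infty\), whereas you place the budget on at most two quadratic factors and put the rest in \(L^\infty\); both choices give the same power \(R^{-(d-2)/2}\).
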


\begin{proof}
As in the proof of Proposition~\ref{prop:high-flat},
Lemma~\ref{lem:reduction} and the Fourier representation of the
hyperplane constraint give
\[
\mathcal I(\tau,\beta)
=
\frac{\sqrt{d}}{2\pi}
\int_{\mathbb R}
\prod_{j=1}^m C_j(s+\beta_j)
\prod_{k=m+1}^d Q_k(s+\beta_k)\,ds.
\]
For the cubic factors, Lemma~\ref{lem:cubic-1d}, applied to
\(\operatorname{sgn}(\tau_j)q_j\), gives
\[
\|C_j\|_{L^4}
\leq
CR^{-1/4}\bigl(\log(2+R)\bigr)^{1/4}.
\]
For the quadratic factors, Lemma~\ref{lem:quadratic-1d} gives
\[
\|Q_k\|_{L^p}
\leq
C_pR^{-1/2+1/p},
\qquad
2\leq p\leq\infty.
\]

If \(0\leq m\leq3\), set
\(q=\frac{4(d-m)}{4-m}\).
Then \(q\geq2\) and
\(\frac m4+\frac{d-m}{q}=1\).
H\"older's inequality therefore gives
\(|\mathcal I(\tau,\beta)| \leq C \prod_{j=1}^m\|C_j\|_{L^4} \prod_{k=m+1}^d\|Q_k\|_{L^q}\).
Hence
\[
|\mathcal I(\tau,\beta)|
\leq
C
R^{-m/4}
\bigl(\log(2+R)\bigr)^{m/4}
R^{-(d-m)(1/2-1/q)}.
\]
Since
\(\frac{d-m}{q}=1-\frac m4\),
the total power of \(R\) is
\(\frac m4+(d-m)\left(\frac12-\frac1q\right) = \frac{d-2}{2}\).
This proves the claim for \(0\leq m\leq3\).

If \(m=4\), use the four cubic factors in \(L^4\) and all quadratic
factors in \(L^\infty\). Then
\[
|\mathcal I(\tau,\beta)|
\leq
C
\left(
R^{-1/4}
\bigl(\log(2+R)\bigr)^{1/4}
\right)^4
R^{-(d-4)/2},
\]
which is
\(|\mathcal I(\tau,\beta)| \leq C R^{-(d-2)/2} \log(2+R)\).
\end{proof}

\section{Cosine coordinates and the regular coordinate-flat patches}\label{sec:cos}

Let
\[
h_0(\xi)=4\sum_{j=1}^d\sin^2(\pi\xi_j)
=2d-2\sum_{j=1}^d\cos(2\pi\xi_j).
\]
\(M_\lambda=\{\xi\in\mathbb T^d:h_0(\xi)=\lambda\}\)
is equivalently
\(\sum_{j=1}^d\cos(2\pi\xi_j)=d-\frac\lambda2\).
Let \(\xi^0\in M_\lambda\) satisfy
\(\sin(2\pi\xi_j^0)\neq0, j=1,\dots,d\).
Set
\(\theta_j=2\pi\xi_j, c_j=\cos\theta_j\).
Since \(\sin\theta_j^0\neq0\), the map \(\theta_j\mapsto c_j\) is a local diffeomorphism.
Let \(\theta_j=\theta_j(c_j)\) be the corresponding inverse branch. Writing
\(c_j=c_j^0+t_j\),
the surface equation becomes
\(\sum_{j=1}^d t_j=0\).
The Euclidean surface measure in these coordinates is a smooth nonvanishing density on
\(\Pi_d\) times \(dt_\Pi\). Thus, if \(\chi\) is a smooth cutoff supported sufficiently close to
\(\xi^0\), then, up to replacing \(x\) by \(-x\), the localized
surface-measure transform
\(\widehat{\chi\,d\sigma_{M_\lambda}}(x)\) has the form
\[
\int_{\Pi_d}
e^{i\sum_{j=1}^d x_j\theta_j(c_j^0+t_j)}
A(t)\,dt_\Pi,
\]
where \(A\in C_c^\infty(\Pi_d)\) incorporates the pullback of \(\chi\)
and the surface-measure density.

The inverse branch satisfies
\[
\theta_j'(c)=-\frac1{\sin\theta_j(c)},
\qquad
\theta_j''(c)=-\frac{\cos\theta_j(c)}{\sin^3\theta_j(c)}.
\]
Therefore, if \(c_j^0\neq0\), then
\[
\theta_j(c_j^0+t)
=
\theta_j(c_j^0)+\theta_j'(c_j^0)t
+\frac12\theta_j''(c_j^0)t^2+O(t^3),
\qquad
\theta_j''(c_j^0)\neq0.
\]
If \(c_j^0=0\), then \(\theta_j^0\in\{\pi/2,3\pi/2\}\), and
\[
\theta_j(c_j^0+t)=
\theta_j(c_j^0)+\theta_j'(c_j^0)
\left(t+\frac{t^3}{6}+O(t^5)\right).
\]

\begin{proposition}[Regular coordinate-flat decay]
\label{prop:regular-cosine}
Let \(d\geq4\), and let
\(m=\#\{j:\cos(2\pi\xi_j^0)=0\}\).
Assume
\(\sin(2\pi\xi_j^0)\neq0, j=1,\dots,d\).
Let \(\chi\) be supported sufficiently close to \(\xi^0\).

If \(0\leq m\leq4\), then
\[
|\widehat{\chi\,d\sigma_{M_\lambda}}(x)|
\leq
C
(1+|x|)^{-(d-2)/2}
\bigl(\log(2+|x|)\bigr)^{m/4}.
\]
If \(m\geq5\), then
\[
|\widehat{\chi\,d\sigma_{M_\lambda}}(x)|
\leq
C
(1+|x|)^{-(d-m)/2-(m-1)/3}.
\]
\end{proposition}

\begin{proof}
It is enough to consider \(|x|\geq1\). Let
\(\Phi_x(t)=\sum_{j=1}^dx_j\theta_j(c_j^0+t_j)\).
Set
\(b_j=x_j\theta_j'(c_j^0), b=(b_1,\dots,b_d)\).
Since each \(\theta_j'(c_j^0)\) is nonzero and fixed, \(|b|\simeq |x|\).
The tangential gradient
on \(\Pi_d\) of \(\sum_jb_jt_j\) has size
\(\operatorname{dist}\bigl(b,\mathbb R(1,\dots,1)\bigr)\).
If this distance is at least \(\varepsilon |x|\), then, after shrinking
the support,
\(|\nabla_{\Pi_d}\Phi_x(t)|\geq c_\varepsilon |x|\)
on the support of \(A\). Moreover,
\(|\partial^\alpha\Phi_x(t)| \leq C_\alpha|x|, |\alpha|\geq1\).
Lemma~\ref{lem:nonstationary}, applied with
\(E=\Pi_d\) and \(R=|x|\), therefore gives, for every \(N\geq1\),
\(\left| \int_{\Pi_d}e^{i\Phi_x(t)}A(t)\,dt_\Pi \right| \leq C_{N,\varepsilon}|x|^{-N}\).
This is stronger than the asserted estimates.

It remains to consider
\[
\operatorname{dist}\bigl(b,\mathbb R(1,\dots,1)\bigr)<\varepsilon |x|.
\]
Let \(\rho\) be chosen so that
\(|b-\rho(1,\dots,1)|= \operatorname{dist}\bigl(b,\mathbb R(1,\dots,1)\bigr)\).
For \(\varepsilon\) sufficiently small, \(|\rho|\simeq |x|\), and hence
\(|x_j|\simeq |x|, j=1,\dots,d\).
After a permutation, the first \(m\)
coordinates are precisely those for which \(c_j^0=0\).
For \(1\leq j\leq m\), define
\(q_j(t) = \theta_j(c_j^0+t)-\theta_j(c_j^0)-\theta_j'(c_j^0)t\).
Then
\[
q_j(0)=q_j'(0)=q_j''(0)=0,
\qquad
q_j'''(0)=\theta_j'(c_j^0)\neq0,
\]
and, after shrinking the support,
\(|q_j'''(t)|\geq c>0\).
For \(m+1\leq k\leq d\), define
\(p_k(t) = \theta_k(c_k^0+t)-\theta_k(c_k^0)-\theta_k'(c_k^0)t\).
Since \(c_k^0\neq0\),
\(|p_k''(t)|\geq c>0\)
on the support. Set
\[
\gamma_x=\sum_{j=1}^d x_j\theta_j(c_j^0),
\qquad
\tau_j=x_j,
\qquad
\beta_j=x_j\theta_j'(c_j^0)=b_j.
\]
By the definitions of \(q_j\) and \(p_k\),
\[
\begin{aligned}
\int_{\Pi_d}e^{i\Phi_x(t)}A(t)\,dt_\Pi
&=
e^{i\gamma_x}
\int_{\Pi_d}
\exp\left(i\left[
\sum_{j=1}^m\tau_jq_j(t_j)
+
\sum_{k=m+1}^d\tau_kp_k(t_k)
+
\sum_{j=1}^d\beta_jt_j
\right]\right)
A(t)\,dt_\Pi \\
&=
e^{i\gamma_x}\mathcal I(\tau,\beta),
\end{aligned}
\]
where \(\mathcal I\) is defined in
\eqref{eq:separable-integral}, with \(a=A\).  Since
\(|x_j|\simeq |x|\) for every \(j\), its hypotheses hold with
\(R=|x|\).  The factor \(e^{i\gamma_x}\) has modulus one, while the
linear Taylor terms are precisely the arbitrary linear perturbation
allowed in \(\mathcal I(\tau,\beta)\).

If \(0\leq m\leq4\), Proposition~\ref{prop:low-flat} gives
\[
|\widehat{\chi\,d\sigma_{M_\lambda}}(x)|
\leq
C
|x|^{-(d-2)/2}
\bigl(\log(2+|x|)\bigr)^{m/4}.
\]
If \(m\geq5\), Proposition~\ref{prop:high-flat} gives
\[
|\widehat{\chi\,d\sigma_{M_\lambda}}(x)|
\leq
C|x|^{-(d-m)/2-(m-1)/3}.
\]
These are the asserted estimates.
\end{proof}

\begin{proposition}[Regular patches with vanishing sine coordinates]
\label{prop:regular-general}
The conclusions of Proposition~\ref{prop:regular-cosine} remain valid for
every regular point \(\xi^0\in M_\lambda\), without the assumption that all
\(\sin(2\pi\xi_j^0)\) are nonzero.  Here
\(m=\#\{j:\cos(2\pi\xi_j^0)=0\}\).
\end{proposition}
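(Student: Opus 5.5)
Let \(S=\{j:\sin(2\pi\xi_j^0)=0\}\), so \(\xi_j^0\in\{0,1/2\}\) and \(c_j^0=\pm1\) for \(j\in S\), and let \(S^c\) be the complement.  Regularity means \(S^c\neq\emptyset\).  Since \(\cos(2\pi\xi_j^0)=\pm1\neq0\) on \(S\), the \(m\) flat coordinates all lie in \(S^c\).  The plan is to use mixed local coordinates: keep \(\theta_j=2\pi\xi_j\) as free variables for \(j\in S\), and use cosine variables \(c_k=c_k^0+t_k\) for \(k\in S^c\).  Fix one index \(k_0\in S^c\) and solve the level equation for \(t_{k_0}\), namely
\[
t_{k_0}=-\sum_{k\in S^c\setminus\{k_0\}}t_k-\sum_{j\in S}(\cos\theta_j-c_j^0).
\]
For \(j\in S\), \(\cos\theta_j-c_j^0=\mp\tfrac12(\theta_j-\theta_j^0)^2+O(|\theta_j-\theta_j^0|^4)\), an even nondegenerate quadratic.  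This gives a smooth parametrization of \(M_\lambda\) near \(\xi^0\) by \((\theta_S,t_{S^c\setminus\{k_0\}})\), and the surface density becomes a smooth nonvanishing factor in the amplitude.

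The phase is
\[
\Phi_x=\sum_{j\in S}x_j\theta_j+\sum_{k\in S^c}x_k\,\theta_k(c_k^0+t_k).
\]
As in Proposition~\ref{prop:regular-cosine}, I would split according to the size of the gradient of \(\Phi_x\) at \(\xi^0\).  At the base point the gradient in \(\theta_j\) (\(j\in S\)) is \(x_j\), since the \(t_{k_0}\) correction is quadratic in \(\theta_j\).  The gradient in the \(t\)-variables is \(b_k-b_{k_0}\) with \(b_k=x_k\theta_k'(c_k^0)\).  Geometrically, \(\nabla_{\xi}\) of the phase is parallel to \(x\), and stationarity means \(x\) is normal to \(M_\lambda\).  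The normal is proportional to \((\sin\theta_j)_j\), which vanishes in the \(S\) slots.

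In the \emph{nonstationary} regime, either some \(|x_j|\geq\varepsilon|x|\) with \(j\in S\), or \(\operatorname{dist}(b_{S^c},\mathbb R(1,\dots,1))\geq\varepsilon|x|\).  There the gradient stays \(\gtrsim|x|\) on a small support, all derivatives are \(O(|x|)\), and Lemma~\ref{lem:nonstationary} gives \(O(|x|^{-N})\).

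In the \emph{normal} regime, \(|x_j|<\varepsilon|x|\) for \(j\in S\), and \(x_k\simeq|x|\) for \(k\in S^c\).  Now I would apply a Fourier representation to the constraint.  Equivalently, rewrite the integral as an integral over the hyperplane \(\Pi_{|S^c|+|S|}\) in the variables \(t_{S^c}\) and \(u_j=\cos\theta_j-c_j^0\), \(j\in S\).  However, \(u_j\) is not a valid coordinate because it is a fold.  So instead of changing variables, insert \(\delta(\sum_k t_k+\sum_{j\in S}u_j(\theta_j))\) and use
\[
\frac1{2\pi}\int_{\mathbb R}e^{is(\cdots)}\,ds .
\]
This factorizes the integral into \(\int_{\mathbb R}\prod_{k\in S^c}C_k\text{ or }Q_k(s+\beta_k)\prod_{j\in S}G_j(s)\,ds\), where
\[
G_j(s)=\int e^{i(x_j\theta_j+s\,u_j(\theta_j))}\psi_j(\theta_j)\,d\theta_j .
\]
Lemma~\ref{lem:reduction} is applied first to reduce to product amplitudes in these variables.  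Its proof works verbatim for \(\mathbb R^n\) with a delta constraint.

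The main obstacle is that \(G_j\) is not uniformly decaying in \(x\).  Its phase has second derivative \(\simeq s\), so Lemma~\ref{lem:vandercorput} gives \(|G_j(s)|\leq C(1+|s|)^{-1/2}\) uniformly in \(x_j\), but no better.  Each \(j\in S\) therefore behaves like an extra quadratic direction whose curvature parameter is \(s\) rather than \(|x|\).  To handle this, note that the \(S^c\) factors \(C_k,Q_k(s+\beta_k)\) are concentrated where \(|s+\beta_k|\lesssim|x|\).  By the rapid decay outside \(|u|\lesssim T^2\) in Lemma~\ref{lem:cubic-1d}, and nonstationary phase for \(Q_k\), they are negligible unless \(|s|\simeq|x|\) (recall \(|\beta_k|\simeq|x|\)).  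In the range \(|s|\simeq|x|\), each \(G_j\) is bounded by \(C|x|^{-1/2}\), exactly like a quadratic factor in \(L^\infty\).  I would make this precise with a smooth cutoff in \(s\) to \(|s|\simeq|x|\).  The complement contributes \(O(|x|^{-N})\), because there the full phase is nonstationary in the \(t\)-variables.

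On the main range, apply the Hölder schemes of Propositions~\ref{prop:high-flat} and \ref{prop:low-flat}, treating each \(G_j\) as a quadratic factor bounded in \(L^\infty\) by \(|x|^{-1/2}\).  For \(m\leq4\) the scheme needs the quadratic factors in \(L^q\), but at least one non-flat quadratic factor exists whenever the count permits, and otherwise \(L^\infty\) suffices.  Provided \(S^c\) contains at least one factor with \(L^p\) integrability, the exponents \(-(d-2)/2\) (with the \(\log^{m/4}\) loss) and \(-(d-m)/2-(m-1)/3\) come out identically.  If \(S^c\) consists only of flat coordinates, their \(L^4\) or \(L^m\) norms supply the integrability.
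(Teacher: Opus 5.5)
Your architecture matches the paper's: angular variables on \(S\) and cosine variables on \(S^c\), a nonstationary/normal split whose degenerate direction is \((\sin\theta_j^0)_j\), the Fourier representation of the delta constraint, the observation that the \(S\)-factors \(G_j\) (the paper's \(P_j\)) have curvature \(s\) rather than \(|x|\), and localization of \(s\) to \(|s|\simeq|x|\) through the \(S^c\) factors.

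\textbf{The gap.} The step that fails is the claim that the proof of Lemma~\ref{lem:reduction} ``works verbatim''.

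That proof expands the amplitude in periodized modes \(c_\nu\prod_k\vartheta_k(y_k)e^{i\kappa\nu_ky_k}\) with arbitrary \(\nu\in\mathbb Z^d\). It therefore needs the product-amplitude bound uniformly over \emph{all} linear perturbations. On \(\Pi_d\) this is harmless, because in Propositions~\ref{prop:high-flat} and \ref{prop:low-flat} the curvature parameters \(\tau_j\simeq R\) do not depend on \(\beta\).

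In your setting it is not harmless. The only reason the essential \(s\)-range satisfies \(|s|\simeq|x|\) is that the linear coefficients \(\beta_k\), \(k\in S^c\), have size \(\simeq|x|\). A mode with \(\kappa\nu_k\approx-\beta_k\) moves the stationary window of the \(S^c\) factors to \(s\approx0\), where \(G_j(s)=O(1)\).

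The uniform bound you would need is then false, not merely unproved. Take \(\xi^0=(0,\dots,0,1/6)\), so \(S^c=\{d\}\), all \(\sigma_j=1\), and \(c_d^0\neq0\). Cancelling every linear term leaves the phase \(x_d\,p_d\bigl(\sum_{j<d}(1-\cos u_j)\bigr)\simeq c\,x_d|u|^4\) on the surface. The resulting integral is of size \(|x|^{-(d-1)/4}\), which is much larger than \(|x|^{-(d-2)/2}\) once \(d\geq4\).

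\textbf{The fix.} What is missing is the paper's split of the Fourier modes.
\begin{itemize}
\item For \(|\nu|\leq\delta|x|\), with \(\delta\) small relative to \(\varepsilon\) and the localization constants, the extra linear terms shift every relevant phase derivative by only \(O(\delta|x|)\). Your normal-regime analysis, including the localization \(|s|\simeq|x|\), then persists with uniform constants.
\item For \(|\nu|>\delta|x|\), bound each mode's integral trivially by a constant and use \(\sum_{|\nu|>\delta|x|}|c_\nu|\lesssim_M|x|^{d-M}\) with \(M\) large.
\end{itemize}
Your nonstationary regime, treated directly for general amplitudes, needs no such reduction.

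\textbf{A smaller point.} Your redistribution of H\"older exponents, with the \(G_j\) placed in \(L^\infty\), is workable on an \(s\)-interval of length \(\lesssim|x|\). However, it uses \(L^p\) bounds for the remaining factors with \(p<2\) (quadratic) or \(p<4\) (cubic), which Lemmas~\ref{lem:quadratic-1d} and \ref{lem:cubic-1d} do not state. It is simpler to observe that on the localized range \(\|G_j\|_{L^q}\lesssim|x|^{-1/2+1/q}\) for \(2\leq q\leq\infty\). The \(G_j\) then enter the H\"older schemes of Propositions~\ref{prop:high-flat} and \ref{prop:low-flat} exactly as quadratic factors, which is how the paper proceeds.
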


\begin{proof}
It is enough to consider \(|x|\geq1\).
Write
\(\theta_j=2\pi\xi_j, S=\{j:\sin\theta_j^0=0\}\).
Since \(\xi^0\) is regular, \(S\neq\{1,\dots,d\}\). Fix \(j_0\notin S\).

For \(j\notin S\), use the cosine coordinate
\(c_j=\cos\theta_j=c_j^0+t_j\).
For \(j\in S\), retain the angular coordinate
\(\theta_j=\theta_j^0+u_j, \sigma_j=\cos\theta_j^0\in\{1,-1\}\).
The surface equation becomes
\(\sum_{j\notin S}t_j + \sum_{j\in S}\sigma_j(\cos u_j-1) = 0\).
Thus a localized Fourier transform has the form
\[
\int
e^{i\Phi_x(t,u)}
A(t,u)
\delta\left(
\sum_{j\notin S}t_j
+
\sum_{j\in S}\sigma_j(\cos u_j-1)
\right)
\,dt\,du,
\]
and denote this integral by \(I_x\), where
\(\Phi_x(t,u) = \sum_{j\notin S}x_j\theta_j(c_j^0+t_j) + \sum_{j\in S}x_ju_j\),
up to an irrelevant constant.

Let
\(\nu=(-\sin\theta_1^0,\dots,-\sin\theta_d^0)\).
Suppose first that
\(\operatorname{dist}(x,\mathbb R\nu)\geq\varepsilon|x|\).
Use the surface equation to eliminate \(t_{j_0}\):
\[
t_{j_0}
=
-\sum_{\substack{j\notin S\\ j\neq j_0}}t_j
-\sum_{j\in S}\sigma_j(\cos u_j-1).
\]
Let \(y\in\mathbb R^{d-1}\) denote the remaining variables, and continue
to write \(\Phi_x\) and \(A\) for the resulting phase and amplitude. The
delta integration in \(t_{j_0}\) has unit Jacobian, so the localized
integral becomes
\(\int_{\mathbb R^{d-1}}e^{i\Phi_x(y)}A(y)\,dy\).

At \(y=0\),
\[
\nabla_y\Phi_x(0)
=
\left(
\bigl(x_j\theta_j'(c_j^0)
-x_{j_0}\theta_{j_0}'(c_{j_0}^0)\bigr)_
{\substack{j\notin S\\ j\neq j_0}},
(x_j)_{j\in S}
\right).
\]
The fixed linear map
\(x\longmapsto\nabla_y\Phi_x(0)\)
has kernel \(\mathbb R\nu\). Hence
\(|\nabla_y\Phi_x(0)| \geq c\,\operatorname{dist}(x,\mathbb R\nu) \geq c\varepsilon|x|\).
Since
\[
|\partial_y^\alpha\Phi_x(y)|
\leq
C_\alpha|x|,
\qquad
|\alpha|\geq1,
\]
shrinking the coordinate support gives
\[
|\nabla_y\Phi_x(y)|\geq c_\varepsilon|x|
\]
on \(\operatorname{supp}A\).  Lemma~\ref{lem:nonstationary}, applied with
\(E=\mathbb R^{d-1}\) and \(R=|x|\), therefore gives, for every \(N\geq1\),
\(\left| \int_{\mathbb R^{d-1}} e^{i\Phi_x(y)}A(y)\,dy \right| \leq C_{N,\varepsilon}|x|^{-N}\).
This is stronger than the asserted estimates.

It remains to consider
\(\operatorname{dist}(x,\mathbb R\nu)<\varepsilon|x|\).
Choose \(\rho\) such that
\(|x-\rho\nu| = \operatorname{dist}(x,\mathbb R\nu)\).
Then
\(|\rho|\simeq |x|=:R\).
For \(j\notin S\), one has
\(x_j\theta_j'(c_j^0)=\rho+O(\varepsilon R)\).

For a product amplitude, the Fourier representation of the delta mass gives
\(\frac1{2\pi} \int_{\mathbb R} \prod_{j\notin S}D_j(s) \prod_{j\in S}P_j(s)\,ds\),
where
\(D_j(s) = \int e^{i(x_j\theta_j(c_j^0+t)+st)} \psi_j(t)\,dt\),
and
\(P_j(s) = \int e^{i(x_ju+s\sigma_j(\cos u-1))} \psi_j(u)\,du\).

Choose \(\eta>0\) sufficiently small so that
\(|s+\rho|\leq\eta R \quad\Longrightarrow\quad |s|\simeq R\).
By taking \(\varepsilon\) and the support of \(\psi_{j_0}\) sufficiently
small, we may also assume that
\[
\left|x_{j_0}\theta_{j_0}'(c_{j_0}^0)-\rho\right|
+
|x_{j_0}|
\sup_{t\in\operatorname{supp}\psi_{j_0}}
\left|
\theta_{j_0}'(c_{j_0}^0+t)-\theta_{j_0}'(c_{j_0}^0)
\right|
\leq
\frac{\eta}{2}R.
\]
Set
\(J=\{s:|s+\rho|\leq\eta R\}\).

For \(s\notin J\), let
\(\phi_s(t) = x_{j_0}\theta_{j_0}(c_{j_0}^0+t)+st\).
Then
\[
\begin{aligned}
|\phi_s'(t)|
&=
\left|
s+x_{j_0}\theta_{j_0}'(c_{j_0}^0+t)
\right|
\\
&\geq
|s+\rho|-\frac{\eta}{2}R
\\
&\geq
\frac12|s+\rho|
\end{aligned}
\]
on \(\operatorname{supp}\psi_{j_0}\). Moreover,
\(|\phi_s^{(r)}(t)| \leq C_{r,\eta}|s+\rho|, r\geq1\).
Indeed, the estimate for \(r=1\) also follows from the preceding
decomposition, while for \(r\geq2\),
\(|\phi_s^{(r)}(t)| \leq C_r|x_{j_0}| \leq C_rR \leq C_{r,\eta}|s+\rho|\).

Lemma~\ref{lem:nonstationary}, applied on \(\mathbb R\) with scale
\(|s+\rho|\),
and with \(N+1\) in place of \(N\), gives, for every \(N\geq1\),
\(|D_{j_0}(s)| \leq C_{N,\eta}|s+\rho|^{-N-1}, s\notin J\).
Using the trivial bounds for all the other one-dimensional factors, we
obtain
\[
\begin{aligned}
&
\int_{\mathbb R\setminus J}
\prod_{j\notin S}|D_j(s)|
\prod_{j\in S}|P_j(s)|\,ds
\\
&\qquad\leq
C_{N,\eta}
\int_{|s+\rho|>\eta R}
|s+\rho|^{-N-1}\,ds
\\
&\qquad\leq
C_{N,\eta}R^{-N}.
\end{aligned}
\]
Thus the contribution of \(\mathbb R\setminus J\) has arbitrary decay in
\(R\).

On \(J\), one has
\(|s|\simeq R\).
For \(j\in S\), the second derivative of the phase defining \(P_j\) is
\(-s\sigma_j\cos u\).
After shrinking the support in \(u\),
\(|-s\sigma_j\cos u|\geq cR\).
Lemma~\ref{lem:vandercorput} therefore gives
\(\|P_j\|_{L^\infty(J)} \leq CR^{-1/2}\).
Since \(|J|\lesssim R\), it follows that
\[
\|P_j\|_{L^p(J)}
\leq
C_pR^{-1/2+1/p},
\qquad
2\leq p\leq\infty.
\]

For \(j\notin S\), the factors \(D_j\) are the cubic or quadratic
one-dimensional factors controlled by Lemmas~\ref{lem:cubic-1d} and
\ref{lem:quadratic-1d}.  Moreover, \(\cos\theta_j^0=0\) can only occur
when \(j\notin S\).  Consequently, there are exactly \(m\) cubic
factors and \(d-m\) quadratic factors, with the factors \(P_j\),
\(j\in S\), counted among the quadratic ones.

The same H\"older arguments as in Propositions~\ref{prop:low-flat} and
\ref{prop:high-flat} now give
\[
|I_x|
\leq
C
R^{-(d-2)/2}
\bigl(\log(2+R)\bigr)^{m/4},
\qquad
0\leq m\leq4,
\]
and
\(|I_x| \leq C R^{-(d-m)/2-(m-1)/3}, m\geq5\).

Finally, we pass from product amplitudes to a general amplitude by the
periodization argument used in Lemma~\ref{lem:reduction}.
Writing \(y=(t,u)\), choose a fixed cube containing
\(\operatorname{supp}A\) and fixed product cutoffs
\(\vartheta_k\) which are equal to \(1\) on that support.  Periodizing
\(A\) on this cube gives
\[A(y)=\sum_{\nu\in\mathbb Z^d}c_\nu\prod_{k=1}^d\vartheta_k(y_k)e^{i\kappa\nu_ky_k},
\qquad
|c_\nu|\leq C_M(1+|\nu|)^{-M}\]
for every \(M\).

Choose \(\delta>0\) sufficiently small relative to
\(\varepsilon\) and \(\eta\).  If \(|\nu|\leq\delta R\), the added
phase \(\kappa\nu\cdot y\) perturbs the relevant phase derivatives by
\(O(\delta R)\).  Hence Lemma~\ref{lem:nonstationary} remains applicable
in the non-normal case.  In the normal case, the factors \(D_j(s)\) are
translated in \(s\) by \(O(\delta R)\), while the added terms in the
phases of the \(P_j(s)\) are linear in \(u\).  Thus the definition of
\(J\), the estimate on \(\mathbb R\setminus J\), and the estimates on
\(J\) remain valid, with uniform constants.  Summing these modes is
permitted since \(\sum_\nu|c_\nu|<\infty\).

For \(|\nu|>\delta R\), the corresponding compactly supported surface
integral is bounded trivially by a constant independent of \(x\) and
\(\nu\).  Therefore
\[
\sum_{|\nu|>\delta R}|c_\nu|
\lesssim_M
\sum_{|\nu|>\delta R}(1+|\nu|)^{-M}
\lesssim_M R^{d-M}.
\]
Taking \(M>N+d\) shows that the contribution of these modes is
\(O_N(R^{-N})\) for every \(N\).  This proves the claim.

\end{proof}

\begin{proposition}[Threshold critical patches]
\label{prop:threshold-decay}
Let \(d\geq4\), and let \(\xi^0\in M_\lambda\) satisfy
\(\sin(2\pi\xi_j^0)=0, j=1,\dots,d\).
Let \(\chi\) be supported sufficiently close to \(\xi^0\). Then, on the
regular part of the local level set,
\(|\widehat{\chi\,d\sigma_{M_\lambda}}(x)| \leq C(1+|x|)^{-(d-2)/2}\).
At the bottom and top thresholds \(\lambda=0\) and \(\lambda=4d\), the local
level set has no \((d-1)\)-dimensional regular part.
\end{proposition}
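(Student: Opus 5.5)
At a critical point \(\xi^0\) all \(\theta_j^0\in\{0,\pi\}\). Put \(\sigma_j=\cos\theta_j^0\in\{\pm1\}\) and \(\theta_j=\theta_j^0+u_j\). The local level set is then
\[
F(u):=\sum_{j=1}^d\sigma_j(1-\cos u_j)=\mu,
\qquad
\mu=\sum_j\cos\theta_j^0-\Bigl(d-\frac\lambda2\Bigr).
\]
Since the critical values are isolated, \(\lambda\) equals the critical value \(\lambda^0=h_0(\xi^0)\), so \(\mu=0\). (The same argument handles nearby \(\lambda\) uniformly, but the statement only concerns \(M_\lambda\).)

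\emph{The endpoint thresholds.} If \(\lambda=0\) or \(\lambda=4d\), all \(\sigma_j\) have the same sign. Then \(F(u)=0\) forces \(u=0\) near the origin, so the local level set is a single point and there is no regular part. From now on assume the \(\sigma_j\) have both signs. The zero set of \(F\) is then a cone with vertex at \(0\), and the vertex is the only non-regular point.

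\emph{Dyadic decomposition.} Write \(F(u)=\tfrac12\langle\Sigma u,u\rangle+O(|u|^4)\) with \(\Sigma=\operatorname{diag}(\sigma_j)\). Take a Littlewood--Paley partition \(\chi(u)=\sum_{k\ge k_0}\beta(2^k u)\), valid away from \(u=0\), which carries no surface measure. The \(k\)-th piece is
\[
I_k(x)=\int_{F=0}\beta(2^ku)\chi(u)e^{-ix\cdot u}\,d\sigma(u).
\]
Rescale by \(u=2^{-k}v\) and set \(F_k(v)=2^{2k}F(2^{-k}v)=\tfrac12\langle\Sigma v,v\rangle+O(2^{-2k}|v|^4)\). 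Then \(\{F=0\}\) in the annulus corresponds to \(\{F_k=0\}\cap\{|v|\sim1\}\), surface measure scales by \(2^{-k(d-1)}\), and
\[
I_k(x)=2^{-k(d-1)}J_k(2^{-k}x),\qquad
J_k(y)=\int_{F_k=0}\beta(v)\tilde\chi_k(v)e^{-iy\cdot v}\,d\sigma(v).
\]
The family \(\{F_k=0\}\cap\operatorname{supp}\beta\) is a \(C^\infty\)-bounded perturbation of the light cone \(\langle\Sigma v,v\rangle=0\) restricted to \(|v|\sim1\), and the gradient is uniformly nonvanishing there. The cone has \(d-2\) nonvanishing principal curvatures (one flat radial direction), uniformly in \(k\). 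After a finite partition of unity on the annulus, standard stationary phase for hypersurfaces with \(d-2\) nonvanishing curvatures, uniform under \(C^\infty\)-small perturbations, gives
\[
|J_k(y)|\le C(1+|y|)^{-(d-2)/2},
\]
uniformly in \(k\). If one wants to stay within the paper's own tools, note that \(\langle\Sigma v,v\rangle=0\) is linear in the variables \(w_j=\sigma_jv_j^2\). Near a point with all \(v_j\ne0\) one can therefore use the \(\Pi_d\)-type coordinates and Proposition~\ref{prop:low-flat} with \(m=0\), treating the \(F_k\) perturbation via smooth dependence. Points with some \(v_j=0\) need a mixed variant, as in Proposition~\ref{prop:regular-general}.

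\emph{Summation.} This yields
\[
|I_k(x)|\le C\,2^{-k(d-1)}\bigl(1+2^{-k}|x|\bigr)^{-(d-2)/2}.
\]
For \(2^k\le|x|\) the bound is \(C2^{-k(d-1)}2^{k(d-2)/2}|x|^{-(d-2)/2}=C2^{-kd/2}|x|^{-(d-2)/2}\), which sums to \(C|x|^{-(d-2)/2}\). For \(2^k>|x|\) the bound \(C2^{-k(d-1)}\) sums to \(C|x|^{-(d-1)}\), which is smaller. Hence
\[
|\widehat{\chi\,d\sigma_{M_\lambda}}(x)|\le C(1+|x|)^{-(d-2)/2}.
\]

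\emph{Main obstacle.} The hardest step is the uniform decay bound for \(J_k\). It needs uniformity over the perturbed cones \(F_k\), which requires checking that the Hessian of the phase restricted to the surface has rank \(d-2\) uniformly. It also needs care near points where some \(v_j=0\). I would handle these by a uniform version of the stationary phase lemma together with the nonstationary-phase Lemma~\ref{lem:nonstationary} for directions \(y\) that are not near normal.
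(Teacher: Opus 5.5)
Your proposal is correct. Its outer structure matches the paper's: the sign argument at \(\lambda=0,4d\), the dyadic decomposition \(|u|\simeq2^{-k}\), the rescaling \(F_k(v)=2^{2k}F(2^{-k}v)\) with the factor \(2^{-k(d-1)}\), and the final summation over \(2^{-k}|x|\geq1\) and \(2^{-k}|x|<1\). The genuine difference is how you obtain the uniform bound \(|J_k(y)|\leq C(1+|y|)^{-(d-2)/2}\).

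You argue geometrically. The rescaled surfaces are uniformly \(C^\infty\)-close to the cone \(\langle\Sigma v,v\rangle=0\) on the annulus. The cone's second fundamental form is \(\Sigma\) restricted to \((\Sigma v)^\perp\), and its radical is exactly \(\mathbb R v\), since \(\Sigma\) is invertible and \(v\in(\Sigma v)^\perp\). Hence \(d-2\) principal curvatures are bounded below by compactness, and this persists under the \(O(2^{-2k})\) perturbation. A parameter-uniform Littman/Stein theorem then gives the decay, and points with some \(v_j=0\) cause no difficulty on this route.

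The paper never computes curvature. It writes \(\delta(F_k)=\frac1{2\pi}\int e^{isF_k}\,ds\) and splits the \(s\)-integral. For \(|s|\leq c2^{-k}|x|\) it uses Lemma~\ref{lem:nonstationary}. For \(|s|>c2^{-k}|x|\) it uses that \(F_k\) is a sum of one-variable functions with \(|\partial_{v_j}^2F_k|=|\cos(2^{-k}v_j)|\geq\frac12\). After expanding the amplitude in product modes, the \(v\)-integral becomes a product of \(d\) one-dimensional van der Corput integrals, each of size \(|s|^{-1/2}\). Integrating \(|s|^{-d/2}\) over \(|s|>c2^{-k}|x|\) yields \((2^{-k}|x|)^{-(d-2)/2}\). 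So in the paper the lost power comes from the \(s\)-integration, not from the flat radial direction of the cone.

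The paper's route is elementary and uses only tools already established there. It also sidesteps the \(v_j=0\) issue that your fallback \(\Pi_d\)-coordinate sketch would face. Your route explains more transparently why the exponent is \((d-2)/2\). However, it rests on an external theorem for hypersurfaces with \(d-2\) nonvanishing principal curvatures (e.g.\ Stein, Chapter VIII, \S3), which you would need to cite in its parameter-uniform form.
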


\begin{proof}
Set
\[
\theta_j=2\pi\xi_j,
\qquad
\theta_j=\theta_j^0+u_j,
\qquad
\sigma_j=\cos\theta_j^0\in\{1,-1\}.
\]
Suppose that exactly \(q\) of the coordinates \(\theta_j^0\) are equal to
\(\pi\). Then
\(\lambda=4q\).
Moreover, since
\(\cos(\theta_j^0+u_j)=\sigma_j\cos u_j\),
one has the exact identity, in angular coordinates,
\(h_0\left(\frac{\theta^0+u}{2\pi}\right)-4q = 2\sum_{j=1}^d\sigma_j(1-\cos u_j)\).

If \(q=0\), then all \(\sigma_j=1\), and the local level-set equation is
\(\sum_{j=1}^d(1-\cos u_j)=0\).
Since every term is nonnegative, this implies \(u=0\). If \(q=d\), the
same conclusion follows after multiplying the equation by \(-1\).
Thus, at \(\lambda=0\) and \(\lambda=4d\), there is no local
\((d-1)\)-dimensional regular part.

Assume from now on that
\(1\leq q\leq d-1\).
Set
\(F(u)=\sum_{j=1}^d\sigma_j(1-\cos u_j)\).
Up to an irrelevant constant phase and a fixed normalization constant, the
localized surface Fourier transform is
\[
I_x
=
\int_{\mathbb R^d}
e^{ix\cdot u}
a(u)
|\nabla F(u)|
\delta(F(u))
\,du,
\]
where \(a\) is smooth and supported sufficiently close to the origin. We
may assume that
\(|\cos u_j|\geq\frac12\)
on the support.

It is enough to consider \(|x|\geq1\). Decompose the punctured support into
dyadic regions
\(|u|\simeq 2^{-k}\).
On such a region, make the change of variables
\(u=2^{-k}v\)
and set
\[
F_k(v)
=
2^{2k}F(2^{-k}v)
=
\sum_{j=1}^d
\sigma_j2^{2k}
\bigl(1-\cos(2^{-k}v_j)\bigr).
\]
Then
\(\partial_{v_j}F_k(v) = \sigma_j2^k\sin(2^{-k}v_j)\),
and
\(\partial_{v_j}^2F_k(v) = \sigma_j\cos(2^{-k}v_j)\).
Also,
\(F(2^{-k}v)=2^{-2k}F_k(v)\),
so that
\(\delta(F(2^{-k}v)) = 2^{2k}\delta(F_k(v))\).
Furthermore,
\[
|\nabla F(2^{-k}v)|
=
2^{-k}
\left(
\sum_{j=1}^d
\bigl(2^k\sin(2^{-k}v_j)\bigr)^2
\right)^{1/2}.
\]
On the fixed annulus \(|v|\simeq1\), the expression in parentheses is
smooth and bounded above and below uniformly in \(k\). Hence the contribution
of the dyadic region has the form
\[
\frac{2^{-k(d-1)}}{2\pi}
\int_{\mathbb R}
\int_{\mathbb R^d}
e^{i(2^{-k}x\cdot v+sF_k(v))}
A_k(v)
\,dv\,ds,
\]
where the functions \(A_k\) are supported in a fixed annulus and are bounded
uniformly in \(C^\infty\).

Suppose first that
\(2^{-k}|x|\geq1\).
Choose \(j\) such that
\(|x_j|\geq\frac{|x|}{\sqrt d}\).
The derivative of the phase in \(v_j\) is
\(2^{-k}x_j + s\sigma_j2^k\sin(2^{-k}v_j)\).
Since \(v\) ranges over a fixed compact set,
\(2^k|\sin(2^{-k}v_j)| \leq |v_j| \leq C\).
Consequently, if
\[
|s|\leq c\,2^{-k}|x|
\]
and \(c>0\) is sufficiently small, then
\[
\left|
2^{-k}x_j+s\sigma_j2^k\sin(2^{-k}v_j)
\right|
\geq
\frac{2^{-k}|x|}{2\sqrt d}.
\]
Lemma~\ref{lem:nonstationary}, applied with scale
\(2^{-k}|x|\), therefore gives, for every \(N\),
\[
\left|
\int_{\mathbb R^d}
e^{i(2^{-k}x\cdot v+sF_k(v))}
A_k(v)\,dv
\right|
\leq
C_N(2^{-k}|x|)^{-N}.
\]
After integration over
\(|s|\leq c\,2^{-k}|x|\),
this part is bounded by
\(C_N(2^{-k}|x|)^{1-N}\).
Choosing \(N\) sufficiently large gives
\(C_N(2^{-k}|x|)^{1-N} \leq C(2^{-k}|x|)^{-(d-2)/2}\).

It remains to consider
\(|s|>c\,2^{-k}|x|\).
For every \(j\),
\[
\left|
\partial_{v_j}^2
\bigl(
2^{-k}x\cdot v+sF_k(v)
\bigr)
\right|
=
|s|\,|\cos(2^{-k}v_j)|
\geq
\frac{|s|}{2}.
\]
Choose fixed functions
\(\vartheta_j\in C_c^\infty(\mathbb R)\) such that
\(\prod_{j=1}^d\vartheta_j(v_j)=1\) on the supports of all \(A_k\).
Periodizing on a fixed box containing these supports, we may write
\[
A_k(v)
=
\sum_{\nu\in\mathbb Z^d}
c_{k,\nu}
\prod_{j=1}^d
\vartheta_j(v_j)e^{i\kappa\nu_jv_j},
\qquad
|c_{k,\nu}|
\leq
C_M(1+|\nu|)^{-M}
\]
for every \(M\), uniformly in \(k\), where \(\kappa>0\) depends only on
the fixed box.  For each Fourier mode, the corresponding integral
factors as
\[
\prod_{j=1}^d
\int_{\mathbb R}
\exp\left(
i\left[
(2^{-k}x_j+\kappa\nu_j)v_j
+
s\sigma_j2^{2k}
\bigl(1-\cos(2^{-k}v_j)\bigr)
\right]
\right)
\vartheta_j(v_j)\,dv_j.
\]
Thus the Fourier mode only replaces the linear coefficient
\(2^{-k}x_j\) by \(2^{-k}x_j+\kappa\nu_j\).  In particular, it does not
change the second derivative
\[
s\sigma_j\cos(2^{-k}v_j),
\]
whose absolute value is at least \(|s|/2\) on the support.  The
second-order estimate in Lemma~\ref{lem:vandercorput} therefore gives,
uniformly in \(\nu\),
\[
\left|
\prod_{j=1}^d
\int_{\mathbb R}\cdots\,dv_j
\right|
\leq
C(1+|s|)^{-d/2}.
\]
Since
\[
\sup_k\sum_{\nu\in\mathbb Z^d}|c_{k,\nu}|<\infty,
\]
summing the Fourier modes gives
\[
\left|
\int_{\mathbb R^d}
e^{i(2^{-k}x\cdot v+sF_k(v))}
A_k(v)\,dv
\right|
\leq
C(1+|s|)^{-d/2}.
\]
Hence
\[
\begin{aligned}
\int_{|s|>c\,2^{-k}|x|}
(1+|s|)^{-d/2}\,ds
&\leq
C(2^{-k}|x|)^{1-d/2}
\\
&=
C(2^{-k}|x|)^{-(d-2)/2}.
\end{aligned}
\]

Combining the two ranges of \(s\), the contribution of a dyadic region
satisfying \(2^{-k}|x|\geq1\) is bounded by
\[
\begin{aligned}
C\,2^{-k(d-1)}
(2^{-k}|x|)^{-(d-2)/2}
&=
C|x|^{-(d-2)/2}
2^{-k\left(d-1-(d-2)/2\right)}
\\
&=
C|x|^{-(d-2)/2}2^{-kd/2}.
\end{aligned}
\]
Therefore
\[
\sum_{2^{-k}|x|\geq1}
C|x|^{-(d-2)/2}2^{-kd/2}
\leq
C|x|^{-(d-2)/2}.
\]

Finally, if
\(2^{-k}|x|<1\),
we use the trivial estimate. Since
\[
|\nabla F_k(v)|^2
=
\sum_{j=1}^d
\bigl(2^k\sin(2^{-k}v_j)\bigr)^2
\simeq
|v|^2
\simeq1
\]
on the fixed annulus, the rescaled level sets have uniformly bounded surface
measure. Thus the contribution of such a dyadic region is at most
\(C2^{-k(d-1)}\).
Summing gives
\(\sum_{2^{-k}|x|<1}2^{-k(d-1)} \leq C|x|^{-(d-1)}\).

Combining the two dyadic ranges yields
\(|I_x| \leq C|x|^{-(d-2)/2}\).
This proves
\(|\widehat{\chi\,d\sigma_{M_\lambda}}(x)| \leq C(1+|x|)^{-(d-2)/2}\).
\end{proof}

\begin{proposition}[Global and away-from-threshold decay]
\label{prop:global-decay}
Let \(d\geq4\), and let \(\chi\) be supported sufficiently close to a point of
\(M_\lambda\).

If \(d=4\), then
\(|\widehat{\chi\,d\sigma_{M_\lambda}}(x)| \leq C(1+|x|)^{-1}\log(2+|x|)\).

If \(d\geq5\), then
\(|\widehat{\chi\,d\sigma_{M_\lambda}}(x)| \leq C(1+|x|)^{-(d-1)/3}\).

If \(d\geq6\) is even and
\(\operatorname{dist} \left( \lambda,\{4j:0\leq j\leq d\} \right) \geq\varepsilon>0\),
then
\(|\widehat{\chi\,d\sigma_{M_\lambda}}(x)| \leq C_\varepsilon (1+|x|)^{-(2d-1)/6}\).
\end{proposition}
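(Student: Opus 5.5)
The plan is to combine Propositions~\ref{prop:regular-cosine}, \ref{prop:regular-general} and \ref{prop:threshold-decay} with a minimization of the local exponents over the admissible values of \(m\). For a point \(\xi^0\in M_\lambda\), either \(\xi^0\in\operatorname{Cr}(h_0)\), where Proposition~\ref{prop:threshold-decay} gives decay \((d-2)/2\), or \(\xi^0\) is regular, where Proposition~\ref{prop:regular-general} gives the bound determined by \(m\). Define
\[
k(m)=\frac{d-2}{2} \quad (0\leq m\leq4),
\qquad
k(m)=\frac{d-m}{2}+\frac{m-1}{3} \quad (m\geq5).
\]
Note that \(k(m)\) is decreasing in \(m\) for \(m\geq5\), since each increment of \(m\) changes \(k\) by \(-1/2+1/3=-1/6\). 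Also \(k(5)=(d-5)/2+4/3=(d-2)/2-1/6\), which is smaller than the low-flat exponent. The worst case is therefore the largest attainable \(m\).

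For \(d=4\) we have \(m\leq4\) and \((d-2)/2=1\). The local bound is \((1+|x|)^{-1}(\log(2+|x|))^{m/4}\leq(1+|x|)^{-1}\log(2+|x|)\), and the critical case is better still.

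For \(d\geq5\), the value \(m=d\) is allowed at a regular point: all \(\cos\theta_j=0\) forces \(\lambda=2d\), with all sines \(\pm1\), and this point is regular. Then \(k(d)=(d-1)/3\). I will check that every other case is at least as good. We have \(k(m)\geq k(d)\) for \(5\leq m\leq d\). For \(m\leq4\) (logarithms included), \((d-2)/2>(d-1)/3\) because \(3d-6>2d-2\) when \(d>4\), so the logarithm is absorbed into a power gain. The critical exponent \((d-2)/2\) is also larger than \((d-1)/3\). This gives the global bound \((1+|x|)^{-(d-1)/3}\).

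For even \(d\geq6\) away from the thresholds, \(m=d\) would force \(\lambda=2d=4(d/2)\), which is a threshold energy. It is therefore excluded once \(\varepsilon\) is less than the distance to \(2d\). Indeed, \(\lambda\) at distance at least \(\varepsilon\) from the thresholds cannot equal \(2d\). Hence \(m\leq d-1\), and \(k(d-1)=1/2+(d-2)/3=(2d-1)/6\). The same comparisons as before show this is the minimum. Critical points occur only at threshold energies, so none arise here.

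The main obstacle is the constant \(C_\varepsilon\). The local theorem is stated for a cutoff near one fixed point, and a uniform constant is needed over all \(\lambda\) with \(\operatorname{dist}(\lambda,\{4j\})\geq\varepsilon\), and also over the compact set of points of the level set. I would argue by compactness. The set \(\{(\lambda,\xi): h_0(\xi)=\lambda,\ \lambda \text{ in the allowed closed set}\}\) is compact and contains no critical points, so \(|\nabla h_0|\) is bounded below on it. The proofs of Propositions~\ref{prop:regular-cosine} and \ref{prop:regular-general} use the same coordinates for all nearby \((\lambda,\xi^0)\). In the cosine charts, the base point \(c^0\) varies smoothly and the level set is the hyperplane \(\sum t_j=\mathrm{const}\). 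For nearby base points, the cubic coordinates become quadratic ones with small second derivative, so \(|q_j'''|\geq c_0\) still holds. I would then note that Lemma~\ref{lem:cubic-1d} really only needs \(|q'''|\geq c_0\), with the estimate for \(B_T\) obtained through van der Corput. If that causes trouble, I would instead center each chart at the nearby zero of \(\cos\) on the hypersurface. A finite subcover then gives a uniform \(C_\varepsilon\). For the global \(d\geq5\) statement, the bound is local with \(\chi\) fixed, so no uniformity is needed beyond the fixed cutoff.
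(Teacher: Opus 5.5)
Your proposal follows the paper's proof: the same case analysis over \(m\) (logarithms absorbed for \(m\leq4\) because \((d-2)/2\) strictly exceeds \((d-1)/3\) when \(d\geq5\), and exceeds \((2d-1)/6\) when \(d\geq6\); \(k_m\) decreasing for \(m\geq5\); and \(m=d\) forcing \(\lambda=2d\), which is a threshold for even \(d\), so \(k_{d-1}=(2d-1)/6\) is the worst case), plus a compactness and finite-cover argument for the constant \(C_\varepsilon\). For that uniformity, the paper uses your fallback rather than your first suggestion: it fixes finitely many chart centers, so each nearby level set is an affine hyperplane \(\sum_j t_j=\kappa\) in the same cosine coordinates, the Fourier representation of the constraint contributes only a unimodular factor \(e^{-is\kappa}\), and the \(q_j\) keep \(q_j''(0)=0\); your first suggestion does not work as stated, because Lemma~\ref{lem:cubic-1d} uses \(q'(0)=q''(0)=0\) to get decay in \(u\) (van der Corput with \(|q'''|\geq c_0\) alone gives only a uniform bound), so it would need an extra recentering step.
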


\begin{proof}
For fixed \(\lambda\), choose a finite smooth partition of unity
\(\{\eta_\nu\}\) on a neighborhood of \(M_\lambda\), subordinate to
sufficiently small neighborhoods on which
Proposition~\ref{prop:regular-general} or
Proposition~\ref{prop:threshold-decay} applies.  Since
\[
\widehat{\chi\,d\sigma_{M_\lambda}}(x)
=
\sum_\nu
\widehat{\chi\eta_\nu\,d\sigma_{M_\lambda}}(x),
\]
it suffices to estimate each localized factor \(\chi\eta_\nu\).
In the remainder of the proof, we relabel such a factor as \(\chi\)
and denote the center of its supporting patch by \(\xi^0\).  The
small-support conditions ensure that the relevant coordinate branches
are defined throughout the patch and that all required nonvanishing
derivative bounds persist on its support.

We first consider a regular patch, and set
\[
m=\#\{j:\cos(2\pi\xi_j^0)=0\}.
\]
Proposition~\ref{prop:regular-general} applies whether or not some of
the quantities \(\sin(2\pi\xi_j^0)\) vanish.

Suppose first that
\(0\leq m\leq4\).
Then
\[
|\widehat{\chi\,d\sigma_{M_\lambda}}(x)|
\leq
C
(1+|x|)^{-(d-2)/2}
\bigl(\log(2+|x|)\bigr)^{m/4}.
\]
If \(d=4\), then
\(\frac{d-2}{2}=1\),
and, since \(m\leq4\),
\(\bigl(\log(2+|x|)\bigr)^{m/4} \leq C\log(2+|x|)\).
Hence
\(|\widehat{\chi\,d\sigma_{M_\lambda}}(x)| \leq C(1+|x|)^{-1}\log(2+|x|)\).

Now let \(d\geq5\). We have
\(\frac{d-2}{2}-\frac{d-1}{3} = \frac{d-4}{6}>0\).
Choose
\(0<\delta<\frac{d-4}{6}\).
Since
\[
\bigl(\log(2+R)\bigr)^{m/4}
\leq
C_\delta(1+R)^\delta,
\qquad
R\geq0,
\]
it follows that
\[
\begin{aligned}
&(1+R)^{-(d-2)/2}
\bigl(\log(2+R)\bigr)^{m/4}
\\
&\qquad\leq
C_\delta
(1+R)^{-(d-2)/2+\delta}
\leq
C_\delta
(1+R)^{-(d-1)/3}.
\end{aligned}
\]
Thus all regular patches with \(0\leq m\leq4\) satisfy the asserted global
decay when \(d\geq5\).

Suppose next that
\[
m\geq5.
\]
Proposition~\ref{prop:regular-general} gives
\[
|\widehat{\chi\,d\sigma_{M_\lambda}}(x)|
\leq
C(1+|x|)^{-k_m},
\qquad
k_m=\frac{d-m}{2}+\frac{m-1}{3}.
\]
A direct calculation gives
\(k_m-\frac{d-1}{3} = \frac{d-m}{6}\geq0\).
Therefore
\(|\widehat{\chi\,d\sigma_{M_\lambda}}(x)| \leq C(1+|x|)^{-(d-1)/3}\).
This proves the desired estimate on every regular patch when \(d\geq5\).

We now consider a threshold critical patch.
Proposition~\ref{prop:threshold-decay} gives
\[
|\widehat{\chi\,d\sigma_{M_\lambda}}(x)|
\leq
C(1+|x|)^{-(d-2)/2}.
\]
Moreover,
\(\frac{d-2}{2}-\frac{d-1}{3} = \frac{d-4}{6}\geq0\).
Thus threshold critical patches satisfy
\(|\widehat{\chi\,d\sigma_{M_\lambda}}(x)| \leq C(1+|x|)^{-(d-1)/3}\)
for every \(d\geq4\). In particular, when \(d=4\), this is stronger than the
regular endpoint estimate with its logarithmic loss.

It remains to prove the even-dimensional away-from-threshold estimate. Assume
that \(d\geq6\) is even and
\(\operatorname{dist} \left( \lambda,\{4j:0\leq j\leq d\} \right) \geq\varepsilon\).
There are no threshold critical patches.

For \(0\leq m\leq4\), Proposition~\ref{prop:regular-general} gives
the low-flat exponent, which we compare with the desired away exponent:
\[
\frac{d-2}{2}-\frac{2d-1}{6}
=
\frac{d-5}{6}
>0.
\]
Choose
\(0<\delta<\frac{d-5}{6}\).
Absorbing the logarithmic factor as above gives
\[
\begin{aligned}
&(1+R)^{-(d-2)/2}
\bigl(\log(2+R)\bigr)^{m/4}
\\
&\qquad\leq
C_\delta
(1+R)^{-(d-2)/2+\delta}
\leq
C_\delta
(1+R)^{-(2d-1)/6}.
\end{aligned}
\]
Hence the desired away estimate holds for \(0\leq m\leq4\).

Finally, suppose that \(m\geq5\). We claim that
\(m\leq d-1\).
Indeed, if \(m=d\), then
\(\cos(2\pi\xi_j^0)=0, j=1,\dots,d\).
Consequently,
\(\sum_{j=1}^d\cos(2\pi\xi_j^0)=0\),
and hence
\(\lambda=2d\).
Since \(d\) is even,
\(2d=4\cdot\frac d2\)
is a threshold energy, contrary to the assumption that \(\lambda\) is away
from thresholds. Therefore \(m\leq d-1\).

The function
\[
k_m=\frac{d-m}{2}+\frac{m-1}{3}
\]
is decreasing in \(m\).  Hence
\[
k_m
\geq
k_{d-1}
=
\frac12+\frac{d-2}{3}
=
\frac{2d-1}{6}.
\]
Proposition~\ref{prop:regular-general} therefore gives
\[
|\widehat{\chi\,d\sigma_{M_\lambda}}(x)|
\leq
C_\varepsilon(1+|x|)^{-(2d-1)/6}.
\]

Since the partition of unity is finite, summing the estimates for the
localized factors \(\chi\eta_\nu\) proves the corresponding estimate for
each fixed energy.

The constants required in the resolvent estimates may also be chosen
uniformly.  Choose fixed small neighborhoods of the finite set
\(\operatorname{Cr}(h_0)\).  Their complement is compact and regular,
and hence admits a finite cover by the coordinate patches used above,
including the affine-hyperplane versions.  On this finite cover, all
coordinate Jacobians, amplitude seminorms, and nonvanishing derivative
lower bounds entering the one-dimensional estimates are uniform.

Finally, if
\[
\operatorname{dist}
\left(
\lambda,\{0,4,\dots,4d\}
\right)
\geq\varepsilon,
\]
then
\[
M_\lambda
\subset
\left\{
\xi\in\mathbb T^d:
\operatorname{dist}
\left(
h_0(\xi),\{0,4,\dots,4d\}
\right)
\geq\varepsilon
\right\}.
\]
The set on the right is compact and disjoint from
\(\operatorname{Cr}(h_0)\).  A finite regular cover of this set therefore
gives the asserted \(C_\varepsilon\) uniformity. 
\end{proof}

\begin{proof}[Proof of Theorem~\ref{thm:local-decay}]
Proposition~\ref{prop:regular-cosine} proves the result when none of the
sine coordinates vanish.  Proposition~\ref{prop:regular-general} removes
that restriction at every regular point, and
Proposition~\ref{prop:threshold-decay} treats the critical points.  These
three propositions give all the assertions of the theorem.
\end{proof}

\begin{proof}[Proof of Theorem~\ref{thm:resolvent}]
At every \(\xi^0\in\operatorname{Cr}(h_0)\),
\[
\nabla^2h_0(\xi^0)
=
8\pi^2\operatorname{diag}
\bigl(
\cos(2\pi\xi_1^0),\dots,\cos(2\pi\xi_d^0)
\bigr),
\]
so all critical points are nondegenerate.  Choose disjoint small
neighborhoods of these points and a smooth partition of unity consisting
of cutoffs supported in these neighborhoods and cutoffs supported in the
regular region.  Proposition~\ref{prop:critical-resolvent} controls every
critical cutoff for \(1\leq r\leq d\).

On the regular cutoffs, Proposition~\ref{prop:global-decay} and
Proposition~\ref{prop:fourier-to-resolvent} give, for \(d\geq5\),
\(1\leq r\leq 2+\frac{2(d-1)}3 = \frac{2(d+2)}3\).
This upper endpoint does not exceed \(d\), so the same range is available
on the critical cutoffs.  For \(2\leq r\leq2(d+2)/3\),
Lemma~\ref{lem:weighted-unweighted}, with
\(p=\frac{2r}{r+2}\),
gives
\(1\leq p\leq\frac{2(d+2)}{d+5}\).
The weighted estimates for \(1\leq r<2\) follow from the case \(r=2\)
and the inclusion \(\ell^r\subset\ell^2\).

When \(d=4\), Propositions~\ref{prop:global-decay} and
\ref{prop:fourier-to-resolvent} give every \(r<4\), and the range
\(2\leq r<4\) is equivalent to every \(1\leq p<4/3\), uniformly
in the spectral parameter.  Again, \(1\leq r<2\) follows from \(r=2\).
Proposition~\ref{prop:log-endpoint} gives the stated square-root
logarithmic loss for \(r=4\) and \(p=4/3\).  The critical cutoffs are
uniformly bounded at \(r=4\), so they obey the same weaker logarithmic
bound.

It remains to consider the improved even-dimensional estimate away from
the thresholds.  Let \(d\geq6\) be even and
\(z\in D_\delta\setminus\mathbb R\).  Choose a smooth cutoff \(\eta_\delta\) such that
\[
\eta_\delta(\xi)=1
\quad\text{when}\quad
\operatorname{dist}
\left(
h_0(\xi),\{0,4,\dots,4d\}
\right)
\leq\frac{\delta}{8},
\]
and
\[
\operatorname{supp}\eta_\delta
\subset
\left\{
\xi:
\operatorname{dist}
\left(
h_0(\xi),\{0,4,\dots,4d\}
\right)
<\frac{\delta}{4}
\right\}.
\]
If \(z\in D_\delta\setminus\mathbb R\) and
\(\xi\in\operatorname{supp}\eta_\delta\), then, for some
\(j\in\{0,\dots,d\}\),
\[
|h_0(\xi)-z|
\geq
|z-4j|-|h_0(\xi)-4j|
\geq
\frac{3\delta}{4}.
\]
Consequently, the multipliers
\[
m_z(\xi)
=
\frac{\eta_\delta(\xi)}{h_0(\xi)-z}
\]
are uniformly bounded in \(C^\infty(\mathbb T^d)\), with bounds
depending only on \(\delta\).  Repeated integration by parts in their
Fourier coefficients gives
\[
|\widehat m_z(n)|
\leq
C_{N,\delta}(1+|n|)^{-N},
\qquad n\in\mathbb Z^d,
\]
for every \(N\), uniformly in \(z\in D_\delta\setminus\mathbb R\).
Thus this part of the resolvent has a uniformly rapidly decaying
convolution kernel and satisfies all the required estimates.

On the support of \(1-\eta_\delta\), every level surface meeting the
support has energy at distance at least \(\delta/8\) from the
thresholds.  Proposition~\ref{prop:global-decay} therefore gives decay
exponent \(\frac{2d-1}{6}\).
Proposition~\ref{prop:fourier-to-resolvent} therefore gives
\(1\leq r\leq 2+\frac{2d-1}{3} = \frac{2d+5}{3}\),
and the subrange \(2\leq r\leq(2d+5)/3\) is equivalent to
\(1\leq p\leq\frac{2(2d+5)}{2d+11}\).
For odd \(d\geq5\), the global estimate already gives the stated
away-from-threshold range.  This completes the proof.
\end{proof}
\section{Necessary conditions and sharpness}\label{sec:sharp}

Fix $\psi\in C_c^\infty((-1,1))$ with $0\leq\psi\leq1$ and $\psi(t)=1$ for $|t|\leq1/2$.

\begin{lemma}
\label{lem:anisotropic-box}
Let $\xi^0\in(-1/2,1/2)^d$ and $L\in GL_d(\mathbb R)$. For $0<\rho_1,\dots,\rho_d\leq1$, define
\[
\widehat f(\xi) = \prod_{j=1}^d \psi\!\left( \frac{(L(\xi-\xi^0))_j}{\rho_j} \right).
\]
Assume that the support of this function is contained in the interior of $(-1/2,1/2)^d$. Put $V=\rho_1\cdots\rho_d$. Then for every $1\leq p<\infty$,
\(\|f\|_{\ell^p(\mathbb Z^d)} \leq C_{p,L,\psi} V^{1-1/p}\).
\end{lemma}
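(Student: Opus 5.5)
The plan is to compute \(f\) explicitly as an inverse Fourier integral, show that it is a modulated, anisotropically rescaled Schwartz function, and then sum the resulting pointwise bound over \(\mathbb Z^d\). Because \(\widehat f\) is supported inside \((-1/2,1/2)^d\), it is a smooth periodic function, and
\[
f(n)=\int_{\mathbb R^d}e^{2\pi i n\cdot\xi}\widehat f(\xi)\,d\xi .
\]
First I would substitute \(\xi=\xi^0+L^{-1}\eta\). This gives \(f(n)=|\det L|^{-1}e^{2\pi i n\cdot\xi^0}\,G(L^{-\mathrm t}n)\), where
\[
G(y)=\int_{\mathbb R^d}e^{2\pi i y\cdot\eta}\prod_j\psi(\eta_j/\rho_j)\,d\eta
=\prod_{j=1}^d\rho_j\check\psi(\rho_jy_j).
\]
Here \(\check\psi\) is Schwartz, so for every \(N\)
\[
|f(n)|\leq C_{N,L}\,V\prod_{j=1}^d(1+\rho_j|(L^{-\mathrm t}n)_j|)^{-N}.
\]

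Next I would bound \(\sum_n|f(n)|^p\). Set \(w=L^{-\mathrm t}n\), so that \(w\) ranges over the lattice \(\Lambda=L^{-\mathrm t}\mathbb Z^d\), which has fixed covolume. The function \(F(w)=\prod_j(1+\rho_j|w_j|)^{-Np}\) is comparable to its values on any unit ball around \(w\), with constants uniform in \(\rho_j\leq1\): this holds because \(1+\rho_j|w_j+e|\simeq1+\rho_j|w_j|\) for \(|e|\leq C\) and \(\rho_j\leq1\). The lattice sum is therefore controlled by a Riemann-sum comparison,
\[
\sum_{w\in\Lambda}F(w)\leq C_L\int_{\mathbb R^d}F(w)\,dw=C_L\prod_j\rho_j^{-1}\int(1+|s|)^{-Np}ds=C\,V^{-1}
\]
once \(Np>1\). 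It follows that \(\|f\|_p^p\leq C V^p V^{-1}\), that is, \(\|f\|_{\ell^p}\leq C V^{1-1/p}\).

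The only step needing care is this uniform lattice-to-integral comparison, which must be independent of the \(\rho_j\). The point that makes it work is that each \(\rho_j\leq1\), so the rescaled bump decays on scales of at least \(1\) and never oscillates faster than the lattice spacing. I would make the comparison rigorous by covering \(\mathbb R^d\) with the translates \(w+P\), \(w\in\Lambda\), of a fixed fundamental parallelepiped \(P\) of \(\Lambda\). On each translate, the quasi-constancy \(F(w)\leq C F(w')\) for \(w'\in w+P\) gives \(F(w)\leq C|P|^{-1}\int_{w+P}F\), and summing over \(w\) yields the displayed bound.
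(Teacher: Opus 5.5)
Your proposal is correct and follows essentially the same route as the paper: you compute \(f\) explicitly via the substitution \(\xi=\xi^0+L^{-1}\eta\), obtaining \(|f(n)|\leq C V\prod_j(1+\rho_j|(L^{-\mathrm t}n)_j|)^{-N}\), and then bound the lattice sum by an integral using quasi-constancy, which is uniform precisely because \(\rho_j\leq1\). The only cosmetic difference is that you sum over the image lattice \(L^{-\mathrm t}\mathbb Z^d\) and tile by its fundamental parallelepiped, whereas the paper compares on the unit cubes \(n+[0,1]^d\) and changes variables \(u=L^{-\mathrm t}x\) in the integral afterward.
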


\begin{proof}
Set $y=L(\xi-\xi^0)$. Then $\xi=\xi^0+L^{-1}y$ and $d\xi=|\det L|^{-1}\,dy$. Hence
\[f(n) = |\det L|^{-1}e^{2\pi i n\cdot\xi^0} \int_{\mathbb R^d} e^{2\pi i(L^{-T}n)\cdot y} \prod_{j=1}^d \psi\!\left(\frac{y_j}{\rho_j}\right)\,dy.\]
Let $y_j=\rho_j t_j$. Then
\[f(n) = |\det L|^{-1}e^{2\pi i n\cdot\xi^0} V \prod_{j=1}^d \widehat\psi_{\mathbb R}\!\left(\rho_j(L^{-T}n)_j\right),\]
where $\widehat\psi_{\mathbb R}(s)=\int_{\mathbb R}e^{2\pi ist}\psi(t)\,dt$. For every $M>0$, we have $|\widehat\psi_{\mathbb R}(s)|\leq C_M(1+|s|)^{-M}$, so
\(|f(n)| \leq C_M V \prod_{j=1}^d \left(1+\rho_j|(L^{-T}n)_j|\right)^{-M}\).

To estimate the $\ell^p$-norm of $f$, we compare the sum over $\mathbb{Z}^d$ with an integral over $\mathbb{R}^d$. For any $x \in n + [0,1]^d$, we have $|x - n| \leq \sqrt{d}$. Therefore,
\[|(L^{-T}n)_j| \geq |(L^{-T}x)_j| - |(L^{-T}(n-x))_j| \geq |(L^{-T}x)_j| - C_L,\]
where $C_L > 0$ is a constant depending only on $L$. Since $0 < \rho_j \leq 1$, it follows that
\(1 + \rho_j|(L^{-T}n)_j| \geq c_L \left(1 + \rho_j|(L^{-T}x)_j|\right)\)
for some constant $c_L > 0$ depending only on $L$. 

Taking $M$ sufficiently large such that $Mp > 1$, the above inequality implies
\[\prod_{j=1}^d \left(1+\rho_j|(L^{-T}n)_j|\right)^{-Mp} \leq C \int_{n+[0,1]^d} \prod_{j=1}^d \left(1+\rho_j|(L^{-T}x)_j|\right)^{-Mp} \,dx.\]
Summing over all $n \in \mathbb{Z}^d$ yields
\[\sum_{n\in\mathbb Z^d} \prod_{j=1}^d \left(1+\rho_j|(L^{-T}n)_j|\right)^{-Mp} \leq C \int_{\mathbb R^d} \prod_{j=1}^d \left(1+\rho_j|(L^{-T}x)_j|\right)^{-Mp} \,dx.\]
We evaluate the integral using the change of variables $u = L^{-T}x$, which gives $dx = |\det L| \,du$:
\[\int_{\mathbb R^d} \prod_{j=1}^d \left(1+\rho_j|(L^{-T}x)_j|\right)^{-Mp} \,dx = |\det L| \prod_{j=1}^d \int_{\mathbb R} (1+\rho_j|u_j|)^{-Mp} \,du_j 
= \widetilde{C} \prod_{j=1}^d \rho_j^{-1} = \widetilde{C} V^{-1}.\]
Therefore,
\(\|f\|_{\ell^p}^p \leq C V^p V^{-1} = C V^{p-1}\),
and hence $\|f\|_{\ell^p} \leq C V^{1-1/p}$.
\end{proof}

\begin{lemma}
\label{lem:resolvent-obstruction}
Let $1\leq p\leq2$ and $0<\varepsilon<\varepsilon_0$. Suppose $f_\varepsilon$ satisfies Lemma~\ref{lem:anisotropic-box} with volume product $V_\varepsilon\simeq\varepsilon^a$. Let
\[
E_\varepsilon = \left\{ \xi : |(L(\xi-\xi^0))_j| \leq \frac{\rho_j}{2},\ j=1,\dots,d \right\}.
\]
Assume that $|h_0(\xi)-\lambda|\leq C\varepsilon$ for $\xi\in E_\varepsilon$. If the uniform resolvent bound
\(\sup_{0<\varepsilon<\varepsilon_0} \| (H_0-\lambda-i\varepsilon)^{-1} \|_{\ell^p\to\ell^{p'}} <\infty\)
holds, then $p\leq \frac{2a}{a+1}$.
\end{lemma}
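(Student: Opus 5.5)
Pair the resolvent applied to $f_\varepsilon$ against $f_\varepsilon$ itself, bound the pairing from below using the smallness of the symbol on $E_\varepsilon$, and bound it from above by the assumed uniform estimate together with Lemma~\ref{lem:anisotropic-box}. Write $z_\varepsilon=\lambda+i\varepsilon$ and $f=f_\varepsilon$. By Plancherel,
\[
\langle (H_0-z_\varepsilon)^{-1}f,f\rangle
=\int_{\mathbb T^d}\frac{|\widehat f(\xi)|^2}{h_0(\xi)-\lambda-i\varepsilon}\,d\xi .
\]
Take the imaginary part:
\[
\operatorname{Im}\langle (H_0-z_\varepsilon)^{-1}f,f\rangle
=\int_{\mathbb T^d}\frac{\varepsilon\,|\widehat f(\xi)|^2}{(h_0(\xi)-\lambda)^2+\varepsilon^2}\,d\xi ,
\]
and every point contributes nonnegatively. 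Restrict to $E_\varepsilon$. There $\widehat f=1$, since $\psi=1$ on $[-1/2,1/2]$, and $|h_0-\lambda|\le C\varepsilon$ by hypothesis. So the integrand is at least $\varepsilon/((C^2+1)\varepsilon^2)\gtrsim\varepsilon^{-1}$. The set $E_\varepsilon$ is the image of a box of volume $V_\varepsilon$ under $L^{-1}$, shifted, so its measure is $|\det L|^{-1}V_\varepsilon\simeq\varepsilon^{a}$. Hence the pairing is $\gtrsim\varepsilon^{a-1}$.

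For the upper bound, Hölder and the assumed uniform bound give
\[
|\langle (H_0-z_\varepsilon)^{-1}f,f\rangle|\le \|(H_0-z_\varepsilon)^{-1}f\|_{\ell^{p'}}\|f\|_{\ell^p}\le C\|f\|_{\ell^p}^2 .
\]
Lemma~\ref{lem:anisotropic-box} then bounds this by $CV_\varepsilon^{2(1-1/p)}\simeq \varepsilon^{2a(1-1/p)}$. This applies for $1\le p<\infty$, hence for all $1\le p\le 2$; at $p=1$ the bound is $\|f\|_{\ell^1}\le C$, consistent with $V^0$. The constants are uniform in $\varepsilon$ because $L$ and $\psi$ are fixed and the hypotheses of that lemma hold for each $\varepsilon$.

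Comparing the two bounds gives $\varepsilon^{a-1}\lesssim\varepsilon^{2a(1-1/p)}$ for all small $\varepsilon$. Letting $\varepsilon\to0$ forces $a-1\ge 2a(1-1/p)$. Rearranging: $2a/p\ge a+1$, i.e. $p\le 2a/(a+1)$. (If $a\le1$ the conclusion $p\le 2a/(a+1)\le1$ would need $p=1$; the inequality still follows from the same comparison, so no case split is needed.) None of the steps is delicate. The only point to check is that both the lower bound for $|E_\varepsilon|$ and the $\ell^p$ bound involve the same product $V_\varepsilon$ with $\varepsilon$-independent constants.
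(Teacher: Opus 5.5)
Your proposal is correct and is essentially the paper's own argument. You bound $\operatorname{Im}\langle (H_0-\lambda-i\varepsilon)^{-1}f_\varepsilon,f_\varepsilon\rangle$ from below by $c\,\varepsilon^{-1}|E_\varepsilon|\simeq\varepsilon^{a-1}$ via Plancherel, and from above by $C\|f_\varepsilon\|_{\ell^p}^2\lesssim V_\varepsilon^{2-2/p}$ via H\"older, the assumed uniform bound and Lemma~\ref{lem:anisotropic-box}; letting $\varepsilon\to0$ then gives $p\leq 2a/(a+1)$.
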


\begin{proof}
On $E_\varepsilon$, we have $\widehat f_\varepsilon(\xi)=1$, and $|E_\varepsilon|=|\det L|^{-1}V_\varepsilon$. By Plancherel,
\begin{align*}
&\operatorname{Im}
\left\langle
(H_0-\lambda-i\varepsilon)^{-1}f_\varepsilon,f_\varepsilon
\right\rangle \\
&\quad=
\int_{\mathbb T^d}
\frac{\varepsilon|\widehat f_\varepsilon(\xi)|^2}
{(h_0(\xi)-\lambda)^2+\varepsilon^2}\,d\xi \\
&\quad\geq
\int_{E_\varepsilon}
\frac{\varepsilon}{(h_0(\xi)-\lambda)^2+\varepsilon^2}\,d\xi
\geq c\varepsilon^{-1}|E_\varepsilon|
\geq c\varepsilon^{-1}V_\varepsilon.
\end{align*}
On the other hand,
\[
\left| \left\langle (H_0-\lambda-i\varepsilon)^{-1}f_\varepsilon, f_\varepsilon \right\rangle \right|
\le \| (H_0-\lambda-i\varepsilon)^{-1}f_\varepsilon \|_{\ell^{p'}} \|f_\varepsilon\|_{\ell^p}
\le C\|f_\varepsilon\|_{\ell^p}^2
\le C V_\varepsilon^{2-2/p}.
\]
Thus $\varepsilon^{-1}V_\varepsilon \leq C V_\varepsilon^{2-2/p}$. Substituting $V_\varepsilon\simeq\varepsilon^a$ gives $\varepsilon^{a-1} \leq C\varepsilon^{2a(1-1/p)}$. Letting $\varepsilon\to0$ yields $a-1 \ge 2a(1-1/p)$, so $\frac{2a}{p}\ge a+1$, hence $p\leq \frac{2a}{a+1}$.
\end{proof}

\subsection{Global obstruction}
Set $\lambda=2d$ and $\xi^0=(1/4,\dots,1/4)$. Define
\[\widehat f_\varepsilon(\xi) = \prod_{j=1}^{d-1} \psi\!\left( \frac{\xi_j-\frac14}{\varepsilon^{1/3}} \right) \psi\!\left( \frac{ \sum_{j=1}^d(\xi_j-\frac14) }{\varepsilon} \right).\]
For small $\varepsilon$ the support lies in the interior of $(-1/2,1/2)^d$. The linear transformation is
\[L(\xi-\xi^0) = \left( \xi_1-\frac14,\dots, \xi_{d-1}-\frac14,\ \sum_{j=1}^d(\xi_j-\frac14) \right),\]
with $\det L=1$. The scales are $\rho_1=\cdots=\rho_{d-1}=\varepsilon^{1/3}$ and $\rho_d=\varepsilon$, so
\(V_\varepsilon = (\varepsilon^{1/3})^{d-1}\varepsilon = \varepsilon^{(d+2)/3}\).
On the support, $|\xi_j-1/4|\leq\varepsilon^{1/3}$ for $j=1,\dots,d-1$, and $|\sum_{j=1}^d(\xi_j-1/4)|\leq\varepsilon$. Hence
\(|\xi_d-1/4| \leq \varepsilon + \sum_{j=1}^{d-1}|\xi_j-1/4| \leq C_d\varepsilon^{1/3}\).
Since $\cos(2\pi\xi_j)=-\sin(2\pi(\xi_j-1/4))=-2\pi(\xi_j-1/4)+O(|\xi_j-1/4|^3)$, we get
\[h_0(\xi)-2d = 4\pi\sum_{j=1}^d(\xi_j-1/4) + O\!\left(\sum_{j=1}^d|\xi_j-1/4|^3\right).\]
On the support, $\sum_{j=1}^d|\xi_j-1/4|^3 \leq C_d\varepsilon$, so $|h_0(\xi)-2d|\leq C_d\varepsilon$. By Lemma~\ref{lem:anisotropic-box},
\(\|f_\varepsilon\|_{\ell^p} \leq C_p \varepsilon^{\frac{d+2}{3}(1-1/p)}\).
Applying Lemma~\ref{lem:resolvent-obstruction} gives $p \leq \frac{2(d+2)}{d+5}$.

\subsection{Away from thresholds in odd dimensions}
Let $d\ge5$ be odd. Take $\lambda=2d$ and $\xi^0=(1/4,\dots,1/4)$, and define $\widehat f_\varepsilon$ as in the global case. The threshold set is $\{0,4,\dots,4d\}=\{4j:0\leq j\leq d\}$. Since $d$ is odd, $2d\equiv2\pmod4$, hence $2d\notin\{0,4,\dots,4d\}$ and $\operatorname{dist}(2d,\{0,4,\dots,4d\})=2$. The same calculation gives $V_\varepsilon=\varepsilon^{(d+2)/3}$ and $|h_0(\xi)-2d|\leq C_d\varepsilon$ on the corresponding box, and $\|f_\varepsilon\|_{\ell^p} \leq C_p \varepsilon^{\frac{d+2}{3}(1-1/p)}$. If, for some fixed \(0<\delta<1\),
\[
\sup_{z\in D_\delta\setminus\mathbb R}
\|(H_0-z)^{-1}\|_{\ell^p\to\ell^{p'}}
<\infty,
\]
then taking \(z_\varepsilon=2d+i\varepsilon\) and applying
Lemma~\ref{lem:resolvent-obstruction} yields
\(p\leq \frac{2(d+2)}{d+5}\).
Thus $p\leq \frac{2(d+2)}{d+5}$ is necessary when $d$ is odd. 

\subsection{Away from thresholds in even dimensions}
Let $d\ge4$ be even. Set $\lambda=2d-1$ and $\xi^0=(1/4,\dots,1/4,1/6)$. Define
\[\widehat f_\varepsilon(\xi) =
\prod_{j=1}^{d-2} \psi\!\left( \frac{\xi_j-\frac14}{\varepsilon^{1/3}} \right)
\psi\!\left( \frac{ \sum_{j=1}^{d-1}(\xi_j-\frac14) }{\varepsilon^{1/2}} \right)
\psi\!\left( \frac{ \sum_{j=1}^{d-1}(\xi_j-\frac14) + \frac{\sqrt3}{2}(\xi_d-\frac16) }{\varepsilon} \right).\]
The associated linear map is
\[L(\xi-\xi^0) = \left(\xi_1-\frac14,\dots,\xi_{d-2}-\frac14,\sum_{j=1}^{d-1}(\xi_j-\frac14),\ \sum_{j=1}^{d-1}(\xi_j-\frac14)+\frac{\sqrt3}{2}(\xi_d-\frac16)\right),\]
so $|\det L|=\sqrt3/2$. The scales are
$\rho_1=\cdots=\rho_{d-2}=\varepsilon^{1/3}$,
$\rho_{d-1}=\varepsilon^{1/2}$, $\rho_d=\varepsilon$, hence
\(V_\varepsilon = (\varepsilon^{1/3})^{d-2}\varepsilon^{1/2}\varepsilon = \varepsilon^{(2d+5)/6}\).
On the support we have $|\xi_j-1/4|\leq\varepsilon^{1/3}$ for $j=1,\dots,d-2$,
$|\sum_{j=1}^{d-1}(\xi_j-1/4)|\leq\varepsilon^{1/2}$, and
$|\sum_{j=1}^{d-1}(\xi_j-1/4)+\frac{\sqrt3}{2}(\xi_d-1/6)|\leq\varepsilon$. Thus
\[|\xi_{d-1}-1/4| \leq \left|\sum_{j=1}^{d-1}(\xi_j-1/4)\right| + \sum_{j=1}^{d-2}|\xi_j-1/4| \leq C_d\varepsilon^{1/3},\]
and
\[|\xi_d-1/6| \leq \frac{2}{\sqrt3}\left|\sum_{j=1}^{d-1}(\xi_j-1/4)+\frac{\sqrt3}{2}(\xi_d-1/6)\right|+ \frac{2}{\sqrt3}\left|\sum_{j=1}^{d-1}(\xi_j-1/4)\right|\leq C\varepsilon^{1/2}.\]
For $j=1,\dots,d-1$, $\cos(2\pi\xi_j)=-2\pi(\xi_j-1/4)+O(|\xi_j-1/4|^3)$. Also
\[\cos(2\pi\xi_d)=\frac12-\sqrt3\pi(\xi_d-1/6)-\pi^2(\xi_d-1/6)^2+O(|\xi_d-1/6|^3).\]
Consequently,
\begin{align*}
h_0(\xi)-(2d-1)
={}&4\pi\left[
\sum_{j=1}^{d-1}(\xi_j-1/4)
+\frac{\sqrt3}{2}(\xi_d-1/6)
\right] \\
&+2\pi^2(\xi_d-1/6)^2
+O\!\left(
\sum_{j=1}^{d-1}|\xi_j-1/4|^3+|\xi_d-1/6|^3
\right).
\end{align*}
On the support, the linear term is bounded by $C\varepsilon$, the quadratic term by $C\varepsilon$, and the cubic terms by $C\varepsilon$ (since $|\xi_d-1/6|^3\leq C\varepsilon^{3/2}\leq C\varepsilon$). Hence
$|h_0(\xi)-(2d-1)|\leq C_d\varepsilon$. By Lemma~\ref{lem:anisotropic-box},
\(\|f_\varepsilon\|_{\ell^p} \leq C_p \varepsilon^{\frac{2d+5}{6}(1-1/p)}\).
Since \(d\) is even, \(2d\in\{0,4,\dots,4d\}\), while
\(\lambda=2d-1\) satisfies
\[
\operatorname{dist}
\bigl(2d-1,\{0,4,\dots,4d\}\bigr)
=
1.
\]
If, for some fixed \(0<\delta<1\),
\[
\sup_{z\in D_\delta\setminus\mathbb R}
\|(H_0-z)^{-1}\|_{\ell^p\to\ell^{p'}}
<\infty,
\]
then we take \(z_\varepsilon=2d-1+i\varepsilon\) and apply
Lemma~\ref{lem:resolvent-obstruction} to obtain
\(p \leq \frac{ 2\frac{2d+5}{6} }{ \frac{2d+5}{6}+1 } = \frac{2(2d+5)}{2d+11}\).
Thus $p\leq \frac{2(2d+5)}{2d+11}$ is necessary when $d$ is even.

\begin{proof}[Proof of Theorem~\ref{thm:sharpness}]
The global construction at \(\lambda=2d\) gives
\(p\leq2(d+2)/(d+5)\).  If \(d\) is odd, this energy is a distance \(2\)
from every threshold, so the same obstruction applies away from the
thresholds.  If \(d\) is even, the construction at \(\lambda=2d-1\)
gives \(p\leq2(2d+5)/(2d+11)\).  These are precisely the assertions of
Theorem~\ref{thm:sharpness}.
\end{proof}

\subsection{Further fixed-energy obstructions}

All functions below are first defined on the fundamental domain
\((-1/2,1/2]^d\) and then extended periodically.  For sufficiently
small \(\varepsilon\), their supports lie in the interior of the
fundamental domain.

We use Lemmas~\ref{lem:anisotropic-box} and
\ref{lem:resolvent-obstruction} throughout.  In each construction below,
write \(y=\xi-\xi^0\).  The \(d\) linear forms appearing in the
arguments of \(\psi\) have determinant of absolute value \(1\), and a
Taylor expansion at \(\xi^0\) gives
\[
|h_0(\xi)-\lambda|\leq C_{\lambda,d}\varepsilon
\]
on the corresponding inner box.  At \(\lambda=4\), the same conclusion
follows from the addition formula for
\(\cos(2\pi\xi_1)+\cos(2\pi\xi_2)\).  Thus, if
\(V_\varepsilon\simeq\varepsilon^a\), a uniform resolvent estimate
forces
\[
p\leq\frac{2a}{a+1}.
\]
It suffices to consider \(0<\lambda\leq2d\), since
\[
h_0\left(
\xi_1+\frac12,\dots,\xi_d+\frac12
\right)
=
4d-h_0(\xi).
\]
The reflected energies \(4d-\lambda\) are treated by translating the
same constructions.

\begin{remark}\label{rem:fixed-energy-obstructions}
The resulting exponents are
\[
\begin{array}{c|c|c}
\text{energy} & a & \text{necessary bound} \\ \hline
0<\lambda<4
& \dfrac{d+1}{2}
& p\leq\dfrac{2(d+1)}{d+3} \\[4pt]
\lambda=4
& \dfrac d2
& p\leq\dfrac{2d}{d+2} \\[4pt]
4<\lambda<6
& \dfrac{2d+1}{4}
& p\leq\dfrac{2(2d+1)}{2d+5} \\[4pt]
2m\leq\lambda<2(m+1)
& \dfrac{3d-m+4}{6}
& p\leq\dfrac{2(3d-m+4)}{3d-m+10} \\[4pt]
\lambda=2d
& \dfrac{d+2}{3}
& p\leq\dfrac{2(d+2)}{d+5}
\end{array}
\]
where \(3\leq m\leq d-1\) in the fourth row.
\end{remark}

\medskip
\noindent\emph{Energies \(0<\lambda<4\).}
Set
\[
\alpha
=
\frac{1}{2\pi}
\arccos\left(1-\frac{\lambda}{2d}\right),
\qquad
\xi^0=(\alpha,\dots,\alpha),
\]
and define
\[
\widehat f_\varepsilon(\xi)
=
\prod_{j=1}^{d-1}
\psi\left(\frac{y_j}{\varepsilon^{1/2}}\right)
\psi\left(\frac{\sum_{j=1}^d y_j}{\varepsilon}\right).
\]
Then
\[
V_\varepsilon
=
(\varepsilon^{1/2})^{d-1}\varepsilon
=
\varepsilon^{(d+1)/2}.
\]

\medskip
\noindent\emph{Energy \(\lambda=4\).}
Set
\[
\xi^0=\left(\frac14,\frac14,0,\dots,0\right)
\]
and fix a sufficiently small constant \(0<\delta<1/100\).  Define
\[
\widehat f_\varepsilon(\xi)
=
\psi\left(\frac{y_1}{\delta}\right)
\prod_{j=3}^d
\psi\left(\frac{y_j}{\varepsilon^{1/2}}\right)
\psi\left(\frac{y_1+y_2}{\varepsilon}\right).
\]
Then
\[
V_\varepsilon
=
\delta(\varepsilon^{1/2})^{d-2}\varepsilon
\simeq
\varepsilon^{d/2}.
\]

\medskip
\noindent\emph{Energies \(4<\lambda<6\).}
Set
\[
\alpha
=
\frac{1}{2\pi}
\arccos\left(\frac{d-\lambda/2}{d-2}\right),
\qquad
\xi^0
=
\left(
\frac14,\frac14,\alpha,\dots,\alpha
\right),
\]
and define
\[
\widehat f_\varepsilon(\xi)
=
\psi\left(\frac{y_1}{\varepsilon^{1/4}}\right)
\prod_{j=3}^d
\psi\left(\frac{y_j}{\varepsilon^{1/2}}\right)
\psi\left(
\frac{
y_1+y_2+\sin(2\pi\alpha)\sum_{j=3}^d y_j
}{\varepsilon}
\right).
\]
Then
\[
V_\varepsilon
=
\varepsilon^{1/4}
(\varepsilon^{1/2})^{d-2}
\varepsilon
=
\varepsilon^{(2d+1)/4}.
\]

To verify the localization in this case, set
\[
S=\sum_{j=3}^d y_j,
\qquad
u=y_1+y_2+\sin(2\pi\alpha)S.
\]
On the inner box,
\[
|u|\lesssim\varepsilon,
\qquad
|S|\lesssim\varepsilon^{1/2},
\qquad
|y_1|+|y_2|\lesssim\varepsilon^{1/4},
\qquad
|y_1+y_2|\lesssim\varepsilon^{1/2}.
\]
Hence
\[
y_1^3+y_2^3
=
(y_1+y_2)(y_1^2-y_1y_2+y_2^2)
=
O(\varepsilon).
\]
Expanding \(h_0\) at \(\xi^0\), the linear part is \(4\pi u\), the
quadratic terms in \(y_3,\dots,y_d\) are \(O(\varepsilon)\), the cubic
terms in the two flat variables have the cancellation above, and all
remaining terms are \(O(\varepsilon)\).  Therefore
\[
|h_0(\xi)-\lambda|
\leq
C_{\lambda,d}\varepsilon
\]
on the inner box.

\medskip
\noindent\emph{Higher intermediate energies.}
Let
\[
3\leq m\leq d-1,
\qquad
2m\leq\lambda<2(m+1),
\]
and set
\[
\alpha
=
\frac{1}{2\pi}
\arccos\left(\frac{d-\lambda/2}{d-m}\right),
\qquad
\xi^0
=
\left(
\underbrace{\frac14,\dots,\frac14}_{m},
\underbrace{\alpha,\dots,\alpha}_{d-m}
\right).
\]
Define
\[
\widehat f_\varepsilon(\xi)
=
\prod_{j=1}^{m-1}
\psi\left(\frac{y_j}{\varepsilon^{1/3}}\right)
\prod_{j=m+1}^d
\psi\left(\frac{y_j}{\varepsilon^{1/2}}\right)
\psi\left(
\frac{
\sum_{j=1}^m y_j
+
\sin(2\pi\alpha)\sum_{j=m+1}^d y_j
}{\varepsilon}
\right).
\]
Then
\[
V_\varepsilon
=
(\varepsilon^{1/3})^{m-1}
(\varepsilon^{1/2})^{d-m}
\varepsilon
=
\varepsilon^{(3d-m+4)/6}.
\]

\medskip
\noindent\emph{The central energy \(\lambda=2d\).}
Set
\[
\xi^0=\left(\frac14,\dots,\frac14\right)
\]
and define
\[
\widehat f_\varepsilon(\xi)
=
\prod_{j=1}^{d-1}
\psi\left(\frac{y_j}{\varepsilon^{1/3}}\right)
\psi\left(\frac{\sum_{j=1}^d y_j}{\varepsilon}\right).
\]
Then
\[
V_\varepsilon
=
(\varepsilon^{1/3})^{d-1}\varepsilon
=
\varepsilon^{(d+2)/3}.
\]

\section*{Acknowledgment}
The author thanks Dr.~Adam Black for valuable discussions and for a
careful reading of earlier versions of the manuscript.  His
comments and suggestions helped clarify several points and correct errors
in an earlier draft.
\bibliographystyle{plain}
\bibliography{references}

@article{Cuenin2019,
	title = {Eigenvalue {Estimates} for {Bilayer} {Graphene}},
	volume = {20},
	issn = {1424-0637, 1424-0661},
	url = {http://link.springer.com/10.1007/s00023-019-00770-x},
	doi = {10.1007/s00023-019-00770-x},
	language = {en},
	number = {5},
	urldate = {2026-08-09},
	journal = {Annales Henri Poincaré},
	author = {Cuenin, Jean-Claude},
	month = may,
	year = {2019},
	pages = {1501--1516},
}

@article{KeelTao1998,
	title = {Endpoint {Strichartz} estimates},
	volume = {120},
	issn = {1080-6377},
	url = {https://muse.jhu.edu/article/811},
	doi = {10.1353/ajm.1998.0039},
	language = {en},
	number = {5},
	urldate = {2026-08-09},
	journal = {American Journal of Mathematics},
	author = {Keel, Markus and Tao, Terence},
	month = oct,
	year = {1998},
	pages = {955--980},
}

@article{Taira2021,
	title = {Uniform resolvent estimates for the discrete {Schrödinger} operator in dimension three},
	volume = {11},
	issn = {1664-039X, 1664-0403},
	url = {https://ems.press/doi/10.4171/jst/387},
	doi = {10.4171/jst/387},
	number = {4},
	urldate = {2026-08-09},
	journal = {Journal of Spectral Theory},
	author = {Taira, Kouichi},
	month = dec,
	year = {2021},
	pages = {1831--1855},
}

@article{KenigRuizSogge1987,
  author  = {Kenig, Carlos E. and Ruiz, Alberto and Sogge, Christopher D.},
  title   = {Uniform {S}obolev inequalities and unique continuation for second order constant coefficient differential operators},
  journal = {Duke Math. J.},
  volume  = {55},
  number  = {2},
  year    = {1987},
  pages   = {329--347},
  doi     = {10.1215/S0012-7094-87-05518-9}
}

@article{ErdosSalmhofer2007,
  author  = {Erd{\H{o}}s, L{\'a}szl{\'o} and Salmhofer, Manfred},
  title   = {Decay of the {F}ourier transform of surfaces with vanishing curvature},
  journal = {Math. Z.},
  volume  = {257},
  number  = {2},
  year    = {2007},
  pages   = {261--294},
  doi     = {10.1007/s00209-007-0125-4}
}

@article{KorotyaevMoller2019,
  author  = {Korotyaev, Evgeny L. and M{\o}ller, Jacob Schach},
  title   = {Weighted estimates for the {L}aplacian on the cubic lattice},
  journal = {Ark. Mat.},
  volume  = {57},
  number  = {2},
  year    = {2019},
  pages   = {397--428},
  doi     = {10.4310/ARKIV.2019.v57.n2.a8}
}

@article{TadanoTaira2019,
  author  = {Tadano, Yukihide and Taira, Kouichi},
  title   = {Uniform bounds of discrete {B}irman--{S}chwinger operators},
  journal = {Trans. Amer. Math. Soc.},
  volume  = {372},
  number  = {7},
  year    = {2019},
  pages   = {5243--5262},
  doi     = {10.1090/tran/7882}
}

@book{Stein1993,
  title={Harmonic Analysis: Real-Variable Methods, Orthogonality, and Oscillatory Integrals},
  author={Stein, Elias M.},
  year={1993},
  publisher={Princeton University Press}
}

@misc{BlackDroginHernandez2025,
	title = {Self-consistent equations and quantum diffusion for the {Anderson} model},
	url = {http://arxiv.org/abs/2506.06468},
	doi = {10.48550/arXiv.2506.06468},
	urldate = {2026-08-26},
	publisher = {arXiv},
	author = {Black, Adam and Drogin, Reuben and Hernández, Felipe},
	month = nov,
	year = {2025},
	note = {arXiv:2506.06468 [math-ph]},
}

@article{Frank2011,
	title = {Eigenvalue bounds for {Schrödinger} operators with complex potentials},
	volume = {43},
	copyright = {http://doi.wiley.com/10.1002/tdm\_license\_1.1},
	issn = {00246093},
	url = {http://doi.wiley.com/10.1112/blms/bdr008},
	doi = {10.1112/blms/bdr008},
	language = {en},
	number = {4},
	urldate = {2026-08-26},
	journal = {Bulletin of the London Mathematical Society},
	author = {Frank, Rupert L.},
	month = aug,
	year = {2011},
	pages = {745--750},
}

@article{KatoYajima1989,
	title = {Some examples of smooth operators and the associated smoothing effect},
	volume = {01},
	issn = {0129-055X, 1793-6659},
	url = {https://www.worldscientific.com/doi/abs/10.1142/S0129055X89000171},
	doi = {10.1142/S0129055X89000171},
	language = {en},
	number = {04},
	urldate = {2026-08-26},
	journal = {Reviews in Mathematical Physics},
	author = {Kato, Tosio and Yajima, Kenji},
	month = jan,
	year = {1989},
	pages = {481--496},
}
\end{document}